\documentclass[11pt, reqno]{amsart} 
\usepackage{amsmath,amsthm,amsfonts,amssymb,mathrsfs,bm,graphicx,stmaryrd,hyperref}
\usepackage[usenames]{color}
\usepackage{graphicx}
\usepackage[letterpaper,hmargin=1.0in,vmargin=1.0in]{geometry}
\parskip 	\smallskipamount

\newtheorem{theorem}{Theorem}[section]
\newtheorem{lemma}[theorem]{Lemma}
\newtheorem{corollary}[theorem]{Corollary}

\newtheorem{remark}[theorem]{Remark}

\newtheorem{definition}[theorem]{Definition}

\DeclareMathOperator*{\argmax}{arg\,max}

\def\N{\mathbb{N}}

\def\Z{\mathbb{Z}}
\def\R{\mathbb{R}}
\def\S{\mathcal{S}}
\def\C{\mathcal{C}}

\def\A{\mathcal{A}}

\def\cL{\mathcal{L}}

\def\E{\mathbb{E}}
\def \e{\varepsilon}

\def \epsilon{\e}

\def \epsilon{\e}

\def\l{\ell}

\newcommand{\PP}{\mathbb{P}}

\newcommand{\eqd}{\stackrel{d}{\implies}}

\begin{document}
\title{The starting point of a directed geodesic is special}

\author{Pantelis Tassopoulos}
\address{Department of Pure Mathematics and Mathematical Statistics, 
University of Cambridge, Cambridge, United Kingdom}
\email{pkt28@cam.ac.uk}

\author{Sourav Sarkar}
\address{Department of Pure Mathematics and Mathematical Statistics, 
University of Cambridge, Cambridge, United Kingdom}
\email{ss2871@cam.ac.uk}

\subjclass[2010]{$82B23$, $82C22$ and $60H15$}
\date{}
\begin{abstract}For any $t\in [0,1/2)$, consider the process $\eta_t:[0,1/2]\mapsto\R$ defined by
$\eta_t(s)=\Pi(t+s)-\Pi(t)$, where $\Pi$ is the directed geodesic from $(0,0)$ to 
$(0,1)$ in the directed landscape.
Let $0\leq t<u<1/2$. We show that the laws of $\eta_t$ and $\eta_u$ are mutually absolutely continuous if and only if $t>0$. This proves Conjecture 14.5 in \cite{DOV} in the affirmative.
\end{abstract}

\maketitle

\maketitle
\tableofcontents
       


\section{Introduction} 
In 1986, Kardar, Parisi and Zhang \cite{kardar1986dynamic} predicted universal scaling behavior for many planar random growth processes. A central object to describe the random growth models in the KPZ universality class is the Airy line ensemble. The parabolic Airy line ensemble is a random sequence of functions $\A_1> \A_2 > \ldots$. It was introduced by Pr\"ahofer and Spohn \cite{prahofer2002scale} in the version $\A_i(t) + t^2$, which is stationary in time, see also Corwin and Hammond \cite{corwin2014brownian}. The functions $\A_i(t)$
are locally Brownian with
diffusion parameter $2$. The top line $\A_1$ is known as the parabolic Airy$_2$ process that appear as the limiting spatial fluctuation of random growth models starting from a single point. The Airy sheet and directed landscape were constructed by Dauvergne, Ortmann and Vir\'ag \cite{DOV} as the scaling limits of one of the KPZ-class models, Brownian last passage percolation. The standard Airy sheet $\S:\R^2\mapsto \R$ is a random continuous function defined in terms of the Airy line ensemble such that $\S(0,\cdot)=\A_1(\cdot)$.
The construction of the directed landscape from the Airy sheet is analogous to that of the Brownian motion from Gaussian distribution. Let $\R^4_\uparrow=\{(x,s;y,t)\in \R^4: s<t\}$. The coordinates $x$ and $y$ can be thought of as spatial and the coordinates $s$ and $t$ as temporal. Then the directed landscape $\mathcal L: \R^4_\uparrow\mapsto\R$ is a random continuous function satisfying
the metric composition law
\begin{align}\label{eq: metric comp 1}
    \cL(x,r;y,t)=\sup_{z\in \R}(\cL(x,r;z,s)+\cL(z,s;y,t))\,,
\end{align}
for all $(x,r,y,t)\in \R^4_\uparrow$ and all $s\in (r,t)$; and with the property that $\cL(\cdot, t;\cdot, t + s^3)$ are independent Airy sheets of scale $s$ for any set of
disjoint time intervals $(t,t+s^3)$.

On the other hand, all models in the KPZ universality class have an analogue of the
height function which is conjectured to converge at large time and small length scales under the KPZ $1:2:3$ scaling to a universal object $h_t(\cdot)$ called the KPZ fixed point. Matetski-Quastel-Remenik \cite{matetski2021kpz} construct the KPZ fixed point as a Markov process in $t$, and they show that it is a limit of the
height function evolution of the totally asymmetric simple exclusion process with arbitrary initial condition. The KPZ fixed point
can also be written in terms of the directed landscape and its initial condition $h_0:\mathbb R \to \mathbb R\cup \{-\infty\}$ as
\begin{align}\label{eq: KPZ fixed pt}
    h_t(y) = \sup_{x\in \R} (h_0(x) + \cL(x, 0; y, t))\,,\qquad y 
    \in \R\,.
\end{align}
The KPZ fixed point started from some arbitrary initial height profile $h_0$ at a positive time $t$ is a random continuous function and exhibits rich fractal structure. In fact, $h_t-h_t(0)$ is mutually absolutely continuous with respect to Brownian motion with diffusivity $2$ on bounded subsets of the line, \cite{sarkar2021brownian} and \cite{tassopoulos2026kpzfixedpointbrownian}.

The metric composition law \eqref{eq: metric comp 1} suggests the directed landscape can be viewed as a random metric on $(1+1)$-dimensional space-time (satisfying a reverse-triangle inequality). Points for which the maximum is attained lie on so called \emph{directed geodesics}. Following the construction in \cite{DOV}, we define directed geodesics in the directed landscape as follows. First, for a continuous path $\pi:[t,s]\mapsto \R$, we define its \emph{length} by
\begin{equation}
    \label{eq: length dir land}\l(\pi)=\inf_{k\in \N}\inf_{t=t_0<t_1<\ldots<t_k=s}\sum_{i=1}^k\cL(\pi(t_{i-1}),t_{i-1};\pi(t_i),t_i)\,.
\end{equation}
We say that $\pi$ is a directed geodesic from $(\pi(t),t)$ to $(\pi(s),s)$ if
\[\l(\pi)=\cL(\pi(t),t;\pi(s),s)\,.\]
For any $u=(x,t;y,s)\in \R^4_\uparrow$, almost surely there exists a unique directed geodesic $\Pi_u$ from $(x,t)$ to $(y,s)$ (see \cite{DOV}, Theorem $12.1$). 

\cite{DOV}~ asked if the starting point of a directed geodesic is special (see \cite{DOV}, Conjecture $14.5$). In this paper, we show that the conjecture is indeed true, that is, we prove the following.

\begin{theorem}\label{thm: main} For any $t\in [0,1/2)$, consider the process $\eta_t:[0,1/2]\mapsto\R$ defined by
\begin{equation}\label{eq: defeta}
\eta_t(s)=\Pi(t+s)-\Pi(t)\,,
\end{equation}
where $\Pi$ is the directed geodesic from $(0,0)$ to 
$(0,1)$ in the directed landscape.
Let $0\leq t<u<1/2$. Then the laws of $\eta_t$ and $\eta_u$ are mutually absolutely continuous if and only if $t>0$.
\end{theorem}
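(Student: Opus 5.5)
The proof splits into two parts: singularity when $t=0$, and mutual absolute continuity when $0<t<u$.

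\textbf{Singularity for $t=0$.} We look for an almost sure local property of the geodesic that holds at the starting point but fails at interior points. The natural candidate is the small-scale behaviour of $s\mapsto \eta_t(s)$ as $s\downarrow 0$. Near an interior time the geodesic looks like a two-sided object: by KPZ $1:2:3$ rescaling, $\varepsilon^{-2/3}\eta_u(\varepsilon \cdot)$ converges to the increment of a bi-infinite geodesic through a typical point. At the starting point we instead see a semi-infinite geodesic emanating from a fixed point, because the geodesic is pinned there.

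I would pick a concrete functional. One option is the liminf or limsup of $\eta_t(s)/(s^{2/3}(\log\log(1/s))^{\beta})$ as $s\downarrow0$, with the exponent $\beta$ differing between the two regimes. Another option is the tail exponent of $\PP(|\eta_t(s)|<\delta s^{2/3})$, fed into a Borel--Cantelli or 0-1 argument along scales $s=2^{-k}$.

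To produce an almost sure difference, I would use the decorrelation of the landscape across the scales $2^{-k}$, which comes from independent increments in time. For $t=0$, the local law is governed by the maximiser of $\cL(0,0;z,s)+\cL(z,s;0,1)$. The first term is an Airy process started from a point, while at an interior point both sides are Airy-type profiles. A cleaner route is to compare the \emph{environment} rather than the path. At $t=0$, the geodesic from a point has a landscape profile $z\mapsto \cL(0,0;z,s)$ with the parabolic peak at $\Pi(s)$ of order $s^{1/3}$, so the geodesic is attracted with exponent related to point-to-line behaviour. I expect identifying a tractable distinguishing statistic to be the main obstacle. I would first try the law of the iterated logarithm for $\Pi$ at its endpoint versus at an interior point, using known geodesic local fluctuation estimates from \cite{DOV}.

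\textbf{Absolute continuity for $0<t<u$.} Here I would use a resampling/Gibbs argument. Condition on the landscape outside the strip of times $(t-\delta,u+\delta)$ for small $\delta$. By the metric composition law, the geodesic restricted to later times is a function of the profiles $f(z)=\cL(0,0;z,t')$ at time $t'<t$ and $g(w)=\cL(w,u';0,1)$ at time $u'>u$, together with the landscape in between. The key inputs are:
\begin{enumerate}
\item Time-stationarity of the landscape in distribution: $\cL(\cdot,r;\cdot,r+h)$ has the same law for all $r$.
\item The results of \cite{sarkar2021brownian,tassopoulos2026kpzfixedpointbrownian}: KPZ fixed point profiles at positive time, such as $f$, are locally absolutely continuous to Brownian motion after recentering.
\end{enumerate}

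Then $\eta_t$ is a measurable functional $F(f,\cL|_{[t',1]},\ldots)$. The corresponding representation for $\eta_u$ uses a time shift by $u-t$, which replaces $f$ by $f'=\cL(0,0;\cdot,t'+u-t)$ and $g$ by a profile over a shorter remaining time. Since $t'>0$ and $t'+u-t>0$, both $f$ and $f'$ are KPZ fixed point profiles at positive times, and after removing the height shift both are mutually absolutely continuous to Brownian motion on compacts. The same holds for $g$ and its counterpart $g'$.

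Because the geodesic only depends on the profiles on a random compact window, we localize. Tightness of the geodesic location gives this with high probability, and an exhaustion argument handles the remainder. Using the independence of $f$, the middle landscape, and $g$, the joint laws are then mutually absolutely continuous, and pushing forward under $F$ yields the claim. One must check that the middle segment has the same duration in both representations, which is arranged by choosing $t'=t-\delta$ and a common window length $1/2+2\delta$. This fits because $u<1/2$ leaves room before time $1$.

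Finally, the remaining gap in the $t=0$ case must be closed. I would try to show that the singularity persists under this same framework: at $t=0$ the initial profile is a narrow wedge, which is singular with respect to any Brownian-like profile, and this singularity must be transmitted to the path.
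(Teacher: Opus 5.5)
\paragraph{Absolute continuity.} This half follows the paper's route. The paper uses your decomposition with $\delta=0$: $f_t=\cL(0,0;\cdot,t)$, the landscape on $[t,t+1/2]$, and $g_t=\cL(\cdot,t+1/2;0,1)$. It then applies time stationarity to the middle piece and the Airy--Brownian density bounds to the recentred profiles.

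One step is thinner than you suggest. Tightness only tells you that $\eta_t$ agrees with the truncated geodesic $\eta^b_t$ (endpoints maximised over $[-b,b]^2$) with high probability. To transfer a null set you also need the law of $\eta^b_u$ to be absolutely continuous with respect to that of $\eta_u$, i.e.\ the truncated problem must not charge sets that the full problem avoids. The paper obtains this by tilting the profiles by $n\varphi_{b,k}$; the tilted laws are absolutely continuous by Brownian Gibbs and Cameron--Martin, and the tilted geodesic eventually coincides with $\Gamma^b_t$.

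\paragraph{Singularity.} This half is the genuine gap, and you leave it open. Your heuristic picture is the right one: a semi-infinite geodesic seen from the start, a two-sided local environment in the bulk, and decorrelation across dyadic scales. Two ingredients are missing.

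First, you need an almost-sure statistic together with a mechanism that makes it work. The paper uses the Ces\`aro averages $n^{-1}\sum_{i\le n}\phi\bigl(2^{2i}(\eta(2^{1-3i})-\eta(2^{-3i}))\bigr)$. At the start it passes to the semi-infinite geodesic $\Pi_\infty$, with respect to which $\Pi|_{[0,1/2]}$ is absolutely continuous. For $\Pi_\infty$ the $1{:}2{:}3$ scaling map is measure preserving and mixing, so Birkhoff gives a.s.\ convergence to a constant. The Brown--Airy local limit identifies this constant as $\E\phi(Y-X)$. At an interior point the paper proves only a one-sided statement. Asymptotic independence of the rescaled increments from any bounded landscape-measurable $H$, combined with the Brown--Bessel local limit, shows the averages a.s.\ cannot converge to any $c\neq\E\phi(Y_1-X_1)$.

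Second, and this is the heart of the paper, you must show that the two local limit laws actually differ. None of your candidates delivers this.
\begin{enumerate}
\item An LIL test based on a different exponent $\beta$ should not work. The local displacements have tails of order $e^{-\Theta(r^3)}$ in both regimes, so the natural normalisation is $s^{2/3}(\log\log(1/s))^{1/3}$ in both, and only the constants can differ.
\item Separating the constants is exactly what the paper does. In the bulk it proves $\PP(|D_1|>r)\le e^{-(5/12+o(1))r^3}$, via a box decomposition, Gaussian moment bounds, the Airy-sheet upper tail from DDV24, and Bessel summability. At the start it proves $\PP(D>r)\ge e^{-((28-8\sqrt6)/25+o(1))r^3}$, via the DDV24 large deviation lower bound at an explicit planted-network metric plus Schilder for the boundary Brownian motion.
\item Your closing heuristic, that the singularity of the narrow wedge `must be transmitted to the path', is not an argument, because singularity is not preserved under push-forward by the argmax/geodesic map. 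The absolute-continuity half itself shows this. The profiles $f_t-f_t(0)$ and $f_u-f_u(0)$ are globally singular, with parabolic curvatures $-1/t$ and $-1/u$, yet $\eta_t$ and $\eta_u$ are mutually absolutely continuous.
\end{enumerate}
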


\subsection{Notation}\label{sec: notation}
We now record some notation we will use throughout.

Let $\C[a,b]$ denote the space of all continuous functions $f:[a,b]\mapsto \R$ and $\C_0[a,b]$ denote the space of all continuous functions $f:[a,b]\mapsto \R$ with $f(a)=0$, with the topologies of uniform convergence. We shall denote the space of all continuous functions in $\R$ as $\C$.

The symbols $\cdot \land \cdot , \cdot \lor \cdot$ denote $\min\{\cdot, \cdot \}$ and $\max\{\cdot, \cdot \}$ respectively. 

We say that a random process taking values in $\R^d$, $d\ge 1$ has \textit{rate $v$} (or \emph{diffusivity} $v$) if the quadratic variation of each of its components in any interval $[s,t]$ is equal to $v(t-s)$. From now on, all Brownian motions are rate two unless stated otherwise. 

When in some estimates a constant appears that will depend on some parameters $a,b,c,\cdots$, it will be denoted by $C_{a,b,c,\cdots}$, unless otherwise specified. Constants without subscripts are deemed to be universal. Constants appearing in various estimates may be enlarged or reduced as needed. 

\subsection{Organization of the paper} Section \ref{sec: prelim} recalls some background materials and relevant results that will be used in the paper. In Section \ref{sec: cont} we show that $\eta_t$ and $\eta_u$ are mutually absolutely continuous if $0<t<u<1/2$. The symmetries of the directed landscape, Lemma \ref{lemma: invariance dir land} and its locally Brownian nature in space, Theorem \ref{thm: mut abs cont airy bm} will play a crucial role, concluding the proof of Theorem \ref{thm: cont}. In Section \ref{sec: sing} we prove that $\eta_0$ and $\eta_u$ are singular for any $0<u<1/2$, thereby proving Theorem \ref{thm: main}. The proof of singularity relies on the fact that the asymptotic fluctuations of rescaled geodesic increments near the starting point are different than those of increments near an interior point, Theorem \ref{thm: distinct-displacement-laws}, established using sharp upper tail asymptotics and the large deviation theory for the directed landscape from \cite{DDV24}. The singularity is then proved using mixing properties of the directed landscape, resulting in the intermediate Theorems \ref{thm: cesaro brown-airy} and \ref{thm: cesaro brown-bessel}.

\section{Preliminaries}\label{sec: prelim}

The standard Airy sheet $\mathcal{S}:\R^2\to \R$ is a random continuous function and can be thought of as assigning `distances' between pairs of space-times points $(x, 0)$ and $(y, 1)$. It can be thought of as the restriction of the metric imposed by the directed landscape computing `lengths' between points $(x, 0)$ and $(y, 1)$ in $(1+1)-$dimensional space-time. Its law is defined in terms of the Airy line ensemble $\mathcal{A}$, see Definition 8.1 in \cite{DOV}. Importantly, for us, $\mathcal{S}(0,\cdot)\overset d =\mathcal{A}_1(\cdot)$, where $\A_1$ is the parabolic Airy$_2$ process. The latter enjoys a strong comparison against Brownian motion on compacts. 

\begin{theorem}[Theorem 1.1. in \cite{dauvergne2024wienerdensitiesairyline}]\label{thm: mut abs cont airy bm}
Fix real $a< b$. Then law of the increments of the parabolic Airy$_2$ process,
\[
\mathcal{A}_1(\cdot+a) - \mathcal{A}_1(a)
\]
on paths $\mathcal{C}_0[0, b-a]$ is mutually absolutely continuous with respect to the law of a rate 2 Brownian motion on $[0, b-a]$. Moreover, the Radon-Nikodym derivative against Brownian motion is almost surely bounded (with bounds depending on $a, b$). 
\end{theorem}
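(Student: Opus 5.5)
The plan is to derive both conclusions from the Brownian Gibbs property of the parabolic Airy line ensemble (Corwin--Hammond \cite{corwin2014brownian}). Fix $a<b$ and set $L=b-a$. Enlarge the window to $[a',b']=[a-1,b+1]$. Let $\mathcal F$ be the $\sigma$-algebra generated by $\A_1(a')$, $\A_1(b')$ and the lower curves $\A_2,\A_3,\ldots$. The Gibbs property says that, conditionally on $\mathcal F$, the path $\A_1|_{[a',b']}$ is a rate two Brownian bridge $B$ from $\A_1(a')$ to $\A_1(b')$ conditioned on the event $E=\{B>\A_2 \text{ on } [a',b']\}$. Its conditional law has density $\mathbf 1_E/Z$ with respect to the free bridge, where $Z=\PP(E\mid\mathcal F)>0$ almost surely. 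The point of enlarging the window is that the endpoints $\A_1(a),\A_1(b)$ are not frozen, so the increment $f=\A_1(\cdot+a)-\A_1(a)$ has a density with respect to Wiener measure, and not merely with respect to a bridge.

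\emph{Step 1: explicit density of a free bridge window.} For the unconditioned bridge on $[a',b']$, the process on $[a,b]$ decomposes as follows. The starting value $\A_1(a)$ is Gaussian. Given it, the increment path on $[a,b]$ is a Brownian motion on $[0,L]$ tilted by $h(f(L))$, where $h$ is the ratio of the Gaussian transition density of the bridge over $[b,b']$ to the heat kernel. Completing squares shows $h$ is bounded above by a constant times $\exp(c\,|\A_1(b')-\A_1(a')|^2)$ and is strictly positive.

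\emph{Step 2: Radon--Nikodym formula.} Combining Step 1 with the conditioning and integrating out $\A_1(a)$ and the bridge pieces on $[a',a]\cup[b,b']$, I get that for Wiener-a.e.\ $f\in\C_0[0,L]$
\[
\frac{d\,\mathrm{Law}(f)}{d\,\mathrm{BM}}(f)=\E\Big[\frac{1}{Z}\,\Phi(f;\mathcal F)\Big],
\]
where $\Phi(f;\mathcal F)$ collects the following factors:
\begin{itemize}
\item the Gaussian weight in $\A_1(a)$ and the factor $h(f(L))$ from Step 1;
\item the probability that the outer bridge pieces avoid $\A_2$;
\item the indicator that $\A_1(a)+f>\A_2(\cdot+a)$ on $[0,L]$.
\end{itemize}
Positivity of the density for Wiener-a.e.\ $f$ gives mutual absolute continuity. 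Given any continuous $f$, the integrand is strictly positive on the event that $\A_1(a)$ exceeds $\sup_{[a',b']}\A_2+\|f\|_\infty$. The Gaussian weight gives this event positive conditional probability, so the expectation is strictly positive.

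\emph{Step 3 (main obstacle): uniform upper bound.} We have $\Phi\le C\exp(c\,|\A_1(b')-\A_1(a')|^2)$ uniformly in $f$. It therefore suffices to show
\[
\E\big[Z^{-1}e^{c|\A_1(b')-\A_1(a')|^2}\big]<\infty.
\]
Finiteness of this expectation is the hard part, because Corwin--Hammond only give that $Z$ is not too small with high probability, whereas here we need negative moments.

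The first thing I would try is to bound $Z$ from below by the probability that a Brownian bridge stays above the level $M=\sup_{[a',b']}\A_2$. Writing $m=\min(\A_1(a'),\A_1(b'))$, when $m-M$ is of order one or larger this probability is at least of order $\min\bigl(1,(m-M)^2\bigr)$, which is explicit. The problem then reduces to tail estimates:
\begin{itemize}
\item small-ball and gap estimates for $\A_1-\A_2$ at fixed times;
\item upper tails of $\sup\A_2$ and lower tails of $\A_1(a'),\A_1(b')$, which are stretched exponential of cubic and $3/2$ order;
\item Gaussian-type tails for $\A_1(b')-\A_1(a')$ (after taking $c$ small by adjusting the window), which are dominated by these.
\end{itemize}
The delicate regime is when $\A_1(a')$ or $\A_1(b')$ is close to $\A_2$, where $Z$ can be tiny. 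Here I would shrink or randomize the outer endpoints $a',b'$ (averaging over them in $[a-2,a-1]$ and $[b+1,b+2]$). The averaging should make the relevant gap law smooth enough that $\E[(\A_1-\A_2)^{-2}]$-type quantities are finite; this is the step I expect to require the genuinely new input.
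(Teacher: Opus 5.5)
\textbf{There is a genuine gap: Steps 1--2 prove the mutual absolute continuity, but the boundedness of the Radon--Nikodym derivative is not established.}

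Your Steps 1--2 are fine. The law of $f$ is a mixture over $\mathcal F$ of measures absolutely continuous with respect to Wiener measure. The mixture density is strictly positive at every $f$, because the integrand is positive once the integration variable $u=\mathcal{A}_1(a)$ is large.

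The missing part is the uniform bound on the Radon--Nikodym derivative, which is the real content of the cited theorem. The paper does not prove it; it imports it from Dauvergne's work. There, boundedness comes from a sharp asymptotic $\exp(-\mathsf S(f)+o(\mathsf S(f)))$ for the density, built from the Brownian Gibbs property and tail bounds for the Airy point process. Your Step 3 leaves the needed estimate $\E[Z^{-1}e^{c\Delta^2}]<\infty$ unproved, where $\Delta=\mathcal{A}_1(b')-\mathcal{A}_1(a')$. You acknowledge this, and the route you sketch toward it breaks down at two concrete points.

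\emph{First, the factor $e^{c\Delta^2}$ is exactly critical.} Integrating out $u$, the supremum over $f$ of the free-bridge factor is $\sup_z p_{T-L}(x'+z,y')/p_T(x',y')=\sqrt{T/(T-L)}\,e^{\Delta^2/(4T)}$, with $T=b'-a'$ and rate two heat kernels $p_t$. So $c=1/(4T)$. This equals the Brownian Gaussian tail constant of $\Delta$ for every choice of $a',b'$, so no adjustment of the window makes $c$ ``small''. For a genuine rate two Brownian increment, $\E[e^{\Delta^2/(4T)}]=\infty$. Finiteness for $\mathcal{A}_1$ therefore hinges on a better-than-Brownian correction to its two-point tails. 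That is a sharp estimate, and it does not follow from the one-point tails you list. The criticality is created by taking $\sup_f$ inside the expectation: for fixed $f$, the factor $p_{T-L}(x'+f(L),y')$ penalizes $\Delta$ far from $f(L)$.

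\emph{Second, your lower bound on $Z$ is vacuous on a typical event.} The flat-barrier bound gives nothing on $\{\min(\mathcal{A}_1(a'),\mathcal{A}_1(b'))\le\sup_{[a',b']}\mathcal{A}_2\}$, since the bridge then starts or ends below the barrier. This event has probability bounded away from zero, not small. The gap $\mathcal{A}_1-\mathcal{A}_2$ is of order one, $\mathcal{A}_2$ fluctuates by order $\sqrt T$ over the window, and for windows away from the origin the parabolic curvature makes the event likely. On the sub-event where $\mathcal{A}_2$ rises $H$ above both endpoint values, the bridge must exceed that level, so $Z\le e^{-H^2/T}$. Upper tails of order $3/2$ for $\sup\mathcal{A}_2$ cannot compensate for a $Z^{-1}$ of size $e^{H^2/T}$. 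You would need joint estimates showing that both top curves must make such an excursion, which is genuinely two-curve Gibbs input with sharp constants.

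The near-touching regime and the proposed averaging over $a',b'$ are a further issue that you leave as a hope.
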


The Airy sheet has a global parabolic curvature evidenced by the almost sure pointwise bounds 
\begin{equation}\label{eq: airy shape bnds}
|\mathcal{S}(x,y)+(x-y)^2|\le \mathfrak{C}+c\log^{2/3}(2+|x|+|y|)\,,\quad \text{ for all } x,y\in \R 
\end{equation}
for some universal constant $c>0$ and some $\mathfrak{C}$ satisfying $\mathbb{E}[a^{\mathfrak{C}^{3/2}}]<\infty$ for some $a>1$, \cite{sarkarthrehalves2022}. The parabolically adjusted version $\mathcal{S}(x, y) + (x - y)^2$ is stationary with respect to spatial translations.

Now, one can extend the Airy sheet to all time-ordered pairs of space-time points $(x, s), (y, t)$ with $s< t$ in a consistent way, recovering the directed landscape, which can be characterised as follows. 

\begin{definition}
	\label{def: dir land}
	The directed landscape $\cL:\R^4_\uparrow \to \R$ is the unique random continuous function satisfying
	\begin{enumerate}
		\item (Airy sheet marginals) For any $t\in \R$ and $s>0$ we have
		$$
		\mathcal{\cL}(x, t; y,t+s^3) \overset d= s\mathcal{S}(x/s^2,y/s^2)
		$$
		as random continuous functions of $x, y$ on $\R^2$.
		\item (Independent increments) For any disjoint time intervals $[s_i, t_i]$, $ 1\le i \le k$, the random functions
		$
		\cL(\cdot, s_i ; \cdot, t_i)\,, 1\le i \le k
		$
		are mutually independent.
		\item (Metric composition law) Almost surely, for any $r<s<t$ and $x, y \in \R$ we have that
		\begin{equation}
			\label{eq: metric comp}
			\cL(x,r;y,t)=\max_{z \in \mathbb R} \cL(x,r;z,s)+\cL(z,s;y,t)\,.
		\end{equation}
	\end{enumerate}
\end{definition}

We record the following useful set of symmetries enjoyed by the directed landscape.
\begin{lemma}[Lemma 10.2, \cite{DOV}, Proposition 1.23, \cite{dauvergne2022scalinglimitlongestincreasing}]
	\label{lemma: invariance dir land} We have the following equalities in distribution as random continuous functions from $\R^4_\uparrow$ to $\R$. Here $r, c \in \R$, and $\e > 0$.
	\begin{enumerate}
		\item (Time stationarity)
		$$
		\displaystyle
		\cL(x, t ; y, t + s) \overset d= \cL(x, t + r ; y, t + s + r).
		$$
		\item (Spatial stationarity)
		$$
		\cL(x, t ; y, t + s) \overset d= \cL(x + c, t; y + c, t + s).
		$$
		\item (KPZ rescaling)
		$$
		\cL(x, t ; y, t + s) \overset d=  \e \cL(\e^{-2} x, \e^{-3}t; \e^{-2} y, \e^{-3}(t + s)).
		$$
        \item (Flip symmetry)
        $$
        \cL(x, s; y, t) \overset d = \cL(-x, s; -y, t) \overset d = \cL(y, -t; x, -s)
        $$
	\end{enumerate}
\end{lemma}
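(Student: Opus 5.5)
The plan is to derive every item from the uniqueness clause of Definition~\ref{def: dir land}. For each transformation $T$ in the list, I will define a new random continuous field $\cL'=\cL\circ T$ on $\R^4_\uparrow$ (up to a scalar factor in (3)). I will then check that $\cL'$ satisfies the three defining properties: Airy sheet marginals, independent increments over disjoint time intervals, and the metric composition law. Since the directed landscape is the unique random continuous function with these properties, this forces $\cL'\overset d=\cL$ as random continuous functions.

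The three checks go as follows for each item.

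\textbf{Independent increments.} The time maps are translation $t\mapsto t+r$, dilation $t\mapsto \e^{-3}t$, identity, or reversal $t\mapsto -t$. Each sends a family of disjoint intervals $[s_i,t_i]$ to another disjoint family, namely $[s_i+r,t_i+r]$, $[\e^{-3}s_i,\e^{-3}t_i]$, or $[-t_i,-s_i]$. Independence for $\cL'$ is therefore inherited directly from $\cL$.

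\textbf{Metric composition.} The intermediate variable $z$ is reparametrised by a bijection of $\R$: $z\mapsto z+c$, $z\mapsto \e^{-2}z$, or $z\mapsto -z$. For the flip $(x,s;y,t)\mapsto(y,-t;x,-s)$, an intermediate time $s\in(r,t)$ corresponds to $-s\in(-t,-r)$. With $\cL'(x,r;y,t)=\cL(y,-t;x,-r)$, the composition
\[
\cL(y,-t;x,-r)=\max_z\bigl[\cL(y,-t;z,-s)+\cL(z,-s;x,-r)\bigr]
\]
rearranges exactly into $\max_z\bigl[\cL'(x,r;z,s)+\cL'(z,s;y,t)\bigr]$. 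Since the maximum is over all $z\in\R$, the composition law transfers in every case.

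\textbf{Airy sheet marginals.} This is a computation for each transformation.
\begin{enumerate}
\item For time stationarity it is immediate, since the marginal law does not depend on $t$.
\item For KPZ rescaling, write the time increment as $\e^{-3}s^3=(s/\e)^3$. Then
\[
\e\,\cL(\e^{-2}x,\e^{-3}t;\e^{-2}y,\e^{-3}t+(s/\e)^3)\overset d=\e\,(s/\e)\,\S\bigl(\e^{-2}x/(s/\e)^2,\ \e^{-2}y/(s/\e)^2\bigr)=s\,\S(x/s^2,y/s^2),
\]
as required.
\item For spatial stationarity, sign flip and time reversal, the check reduces to three distributional symmetries of the Airy sheet itself, all as functions on $\R^2$:
\[
\S(x+c,y+c)\overset d=\S(x,y),\qquad \S(-x,-y)\overset d=\S(x,y),\qquad \S(y,x)\overset d=\S(x,y).
\]
\end{enumerate}

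The main obstacle is this last step: establishing these Airy sheet symmetries, especially the transpose symmetry $\S(y,x)\overset d=\S(x,y)$. None of them is transparent from the Airy line ensemble construction. I would obtain them from the prelimiting Brownian last passage percolation, of which $\S$ is the scaling limit in \cite{DOV}. In that model, a spatial shift corresponds to shifting the environment, a sign flip to reflecting it, and a transpose to reversing the order of lines and time (rotating the environment by $180^\circ$). Each of these preserves the law of the i.i.d.\ Brownian environment and maps last passage values to last passage values. The symmetries therefore hold exactly before the limit and pass to the limit by continuity under uniform convergence on compacts.

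For translation, the prelimit statement is exact only up to the centering and scaling terms. I would check that these are themselves translation covariant, which is consistent with $\S(x,y)+(x-y)^2$ being stationary, as already noted in the text. Once the three Airy sheet symmetries are in hand, uniqueness in Definition~\ref{def: dir land} yields all four items of the lemma.
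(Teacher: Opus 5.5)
Your framework (transform $\cL$, verify Airy sheet marginals, independent increments and metric composition, then invoke uniqueness in law) is sound, and items (1)--(3) go through as you describe. The KPZ computation with $(s/\e)^3$ and the time-reversed composition law are both correct.

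The gap is in how you source the Airy sheet symmetries needed for item (4). Reversing both the line order and time in Brownian LPP, e.g.\ $\tilde B_i(t)=B_{n+1-i}(1)-B_{n+1-i}(1-t)$, sends a path from $(a,n)$ to $(b,1)$ to a path from $(1-b,n)$ to $(1-a,1)$ with the same weight. In the scaling where $\S(x,y)$ is the limit of passage values from time $2xn^{-1/3}$ on line $n$ to time $1+2yn^{-1/3}$ on line $1$, this is the map $(x,y)\mapsto(-y,-x)$. The centering depends only on $y-x$, so the $180^\circ$ rotation yields $\S(x,y)\overset d=\S(-y,-x)$. That is the transpose composed with the sign flip, not the transpose itself.

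Nor is there a reflection of the Brownian environment producing $\S(-x,-y)\overset d=\S(x,y)$. Reversing only time, or only the line order, does not map admissible paths between such endpoints to admissible paths, because the semi-discrete model treats its two directions asymmetrically. So Brownian LPP gives you only the combined symmetry $\cL(x,s;y,t)\overset d=\cL(-y,-t;-x,-s)$, which is the flip symmetry of \cite{DOV}. Neither of the two separate equalities in item (4) follows from your argument.

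The missing ingredient is an independent proof of the spatial reflection $\cL(x,s;y,t)\overset d=\cL(-x,s;-y,t)$, equivalently $\S(x,y)\overset d=\S(-x,-y)$. This is why the statement also cites \cite[Proposition 1.23]{dauvergne2022scalinglimitlongestincreasing}. There the directed landscape is shown to be the scaling limit of lattice models such as exponential or Poisson last passage percolation. Their environment is invariant under the coordinate swap $(i,j)\mapsto(j,i)$, which becomes $x\mapsto -x$ in the transverse coordinate of the limit.

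Once reflection is available, composing it with the rotation symmetry gives the pure time reversal $\cL(y,-t;x,-s)$. Your uniqueness framework (or passing to the limit directly) then completes item (4).
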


Importantly for us, for any fixed choice of endpoint pairs, the maximum in \eqref{eq: metric comp} is almost surely uniquely attained, giving rise to the existence of geodesics with respect to the length functional in \eqref{eq: length dir land}. Existence and uniqueness of $\cL$-geodesics are shown in \cite[Theorem 12.1 and Lemma 13.2]{DOV}. Moreover, the existence and uniqueness of geodesics subject to various boundary conditions (adding in \eqref{eq: metric comp} final and terminal weights $g, h: \R\to \R$ with asymptotic growth conditions) was also established in \cite[Lemma 6.1]{sarkarthrehalves2022}. In particular, Lemma 6.1 in \cite{sarkarthrehalves2022} gives the following. 

\begin{lemma}[Lemma 6.1, \cite{sarkarthrehalves2022}]\label{lemma: geod bdry cond}
    Let $h, g: \R\to \R\cup\{-\infty\}$ be measurable functions that are locally bounded from above. Fix $s < t$ and consider the random field
    \[
    H(x, y) := h(x) + \cL(x, s; y, t) + g(y)\,,\qquad x, y \in \R\,.
    \]
    Suppose that $H$ achieves its maximum almost surely, and that the set of points in $\R^2$ where it is maximized is almost surely bounded. Then almost surely, there exists a unique geodesic $\gamma$ from $(h, s)$ to $(g, t)$.
\end{lemma}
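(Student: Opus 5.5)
We take the statement to be this: with $H(x,y)=h(x)+\cL(x,s;y,t)+g(y)$, a geodesic from $(h,s)$ to $(g,t)$ is a continuous path $\gamma:[s,t]\to\R$ such that $h(\gamma(s))+\l(\gamma)+g(\gamma(t))=\sup H$. The plan has two halves. Existence comes from concatenating a maximiser of $H$ with a point-to-point $\cL$-geodesic. Uniqueness comes from pinning down the location of every geodesic at each rational intermediate time, and then using continuity.

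\emph{Existence.} By hypothesis, almost surely $H$ attains its maximum and the argmax set $M\subset\R^2$ is bounded. Since $H$ is upper semicontinuous where it matters, $M$ is closed, and we can pick $(x^*,y^*)\in M$ measurably, for example the lexicographically smallest maximiser. By \cite[Theorem 12.1]{DOV} and the compactness argument behind it, almost surely every pair $(x,s;y,t)$ simultaneously admits at least one $\cL$-geodesic. Let $\pi$ be such a geodesic from $(x^*,s)$ to $(y^*,t)$. Then
\[
h(x^*)+\l(\pi)+g(y^*)=H(x^*,y^*)=\sup H .
\]
Any competing path $\gamma$ from time $s$ to time $t$ satisfies $\l(\gamma)\le \cL(\gamma(s),s;\gamma(t),t)$ by the reverse triangle inequality in \eqref{eq: length dir land}, and so its total weight is at most $\sup H$. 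Hence $\pi$ is a geodesic from $(h,s)$ to $(g,t)$.

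\emph{Uniqueness.} Fix a rational $r\in(s,t)$ and define
\[
A_r(z)=\sup_x\big(h(x)+\cL(x,s;z,r)\big),\qquad B_r(z)=\sup_y\big(\cL(z,r;y,t)+g(y)\big).
\]
By the metric composition law \eqref{eq: metric comp} we have $\sup H=\sup_z (A_r+B_r)(z)$. Any geodesic $\gamma$ must satisfy $A_r(\gamma(r))+B_r(\gamma(r))=\sup H$, so $\gamma(r)$ lies in $\argmax(A_r+B_r)$. The boundedness of $M$ forces this argmax set into a bounded (random) window, since a geodesic between points of a bounded set stays bounded on $[s,t]$. The key claim is that $A_r+B_r$ has almost surely a unique maximiser. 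The argument runs as follows:
\begin{enumerate}
\item By independent increments (Definition \ref{def: dir land}), $A_r$ and $B_r$ are independent.
\item $A_r$ is a KPZ fixed point at positive time $r-s$ started from $h$, as in \eqref{eq: KPZ fixed pt}. On any fixed compact interval, its increments are absolutely continuous with respect to rate-two Brownian motion, by \cite{sarkar2021brownian,tassopoulos2026kpzfixedpointbrownian}. In the simplest case, where $h$ is a narrow wedge, this is Theorem \ref{thm: mut abs cont airy bm} together with Lemma \ref{lemma: invariance dir land}.
\item For Brownian motion $W$ and an independent continuous function $f$, $W+f$ has almost surely a unique maximiser on each compact interval with rational endpoints. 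One proves this by conditioning on $f$ and using the standard fact that two disjoint rational intervals almost surely give distinct maxima; this uses the absolute continuity of the running maximum of $W$ given its value at an interior rational time.
\item Absolute continuity transfers this null event to $A_r+B_r$. Taking a countable union over rational compact windows then yields uniqueness of the global maximiser.
\end{enumerate}

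Call this unique maximiser $z_r$. Taking a countable intersection over rational $r$, almost surely every geodesic satisfies $\gamma(r)=z_r$ for all rational $r\in(s,t)$. By continuity, any two geodesics then agree on all of $[s,t]$, including the endpoints, as the limits $r\downarrow s$ and $r\uparrow t$ show.

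The main obstacle is step 2: showing that $A_r$ has Brownian-absolutely-continuous local increments for a completely general initial condition $h$, which is only assumed locally bounded above and possibly $-\infty$ on large sets. I would first try to invoke the cited KPZ fixed point results directly. The hypotheses on $h$ need care here, because that absolute continuity requires $A_r$ to be finite, and this is guaranteed by the assumption that $H$ attains its maximum. If that route proves delicate, I would instead decompose $A_r$ at an intermediate time $r'\in(s,r)$. This writes $A_r(z)=\sup_w\big(A_{r'}(w)+\cL(w,r';z,r)\big)$. Since the maximisers $w$ are confined to a compact set, $A_r$ can be compared locally with a finite maximum of Airy sheet rows $\cL(w,r';\cdot,r)$, each of which is Brownian-like by Theorem \ref{thm: mut abs cont airy bm}.
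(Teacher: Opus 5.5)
The paper does not prove this lemma; it is quoted from \cite{sarkarthrehalves2022}, so there is no in-paper argument to compare with. Your proposal follows the standard scheme used for point-to-point geodesics in \cite{DOV}, and it is correct:
\begin{itemize}
\item existence by concatenating a maximiser of $H$ with a point-to-point geodesic;
\item uniqueness from an a.s.\ unique maximiser of $A_r+B_r$ at each rational $r$, using independence of $A_r,B_r$ and local Brownian absolute continuity of $A_r$, followed by continuity.
\end{itemize}
Four points need tidying.

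\begin{enumerate}
\item \emph{Existence.} Since $h,g$ are only measurable, $H$ need not be upper semicontinuous and $M$ need not be closed. You need neither property: existence only requires some maximiser for each $\omega$ in the full-measure event. The simultaneous existence of point-to-point geodesics is \cite[Lemma 13.2]{DOV}; Theorem 12.1 there concerns a fixed pair.

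\item \emph{The bounded window.} The claim that $\argmax(A_r+B_r)$ lies in a bounded window is unjustified. For measurable $h$ the supremum defining $A_r(z)$ need not be attained, so a maximiser of $A_r+B_r$ need not lie on any geodesic. The claim is also unnecessary: two global maximisers $z_1<z_2$ would both maximise over any rational window containing them, so your countable union over windows already gives global uniqueness. In step 3, the clean justification is the following. For $q_1<q_2$, the difference $\max_{[q_2,b]}(W+f)-\max_{[a,q_1]}(W+f)$ contains the independent Gaussian summand $W(q_2)-W(q_1)$, so it has no atom at $0$.

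\item \emph{Applying the absolute continuity results.} These results are formulated for upper semicontinuous initial data satisfying a growth condition, while here $h$ is only measurable and locally bounded above. Two reductions handle this.
\begin{itemize}
\item Replace $h$ by its upper semicontinuous envelope; this does not change $A_r$ because $\cL$ is continuous.
\item Truncate: let $h^K=h$ on $[-K,K]$ and $-\infty$ elsewhere. Note that $g\not\equiv-\infty$, since otherwise $M=\R^2$. Hence $\sup H<\infty$ gives $A_r(a-1),A_r(b+1)<\infty$. With \eqref{eq: airy shape bnds}, this confines the near-maximisers in $A_r(z)$, $z\in[a,b]$, to a random compact set. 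Therefore $A_r=A_r^K$ on $[a,b]$ with probability tending to $1$.
\end{itemize}
This transfers Wiener-null events from $A_r^K$ to $A_r$; it is the same truncation device as in Section \ref{sec: cont}. It also shows that $A_r$ and $B_r$ are continuous, being suprema of uniformly equicontinuous families.

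\item \emph{Drop the fallback in your last paragraph.} The supremum over $w$ runs over a continuum, not finitely many Airy sheet rows. Local Brownian absolute continuity does not pass to such suprema for free; that is exactly the content of \cite{sarkar2021brownian}. Moreover, rewriting $A_r$ via metric composition at $r'$ does not change the function. Uniqueness of the argmax of $A_r+f$ is still precisely what has to be shown.
\end{enumerate}
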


\begin{remark}
    Note by the above result, the existence and uniqueness of geodesics in $\cL$ from $(h, s)$ to $(g, t)$ will be guaranteed for boundary data $h, g:\R \to \R\cup\{-\infty\}$ continuous and real-valued in some $[-b, b]$ and $-\infty$ outside it; we will denote them by $h^b, g^b$ respectively. The symmetries of $\cL$, Lemma, \ref{lemma: invariance dir land} and the shape bounds \eqref{eq: airy shape bnds} mean the maximum will typically be achieved in some compact set. 
\end{remark}

Complementary to the existence of directed geodesics, we clarify the sense in which we call the directed landscape a `(directed) metric'. By the metric composition law \eqref{eq: metric comp}, for all $(x, s; y, t)$ the directed landscape satisfies the reverse triangle inequality
\[
\cL(p; q) + \cL(q; r) \le \cL(p; r)
\]
for all points $p = (x, s), q = (y, t), r = (z, u)$ with $x, y, z\in \R$ and $s < t< u$. It can thus be construed as an element of the space of `directed metrics' in $\R^4_\uparrow$ in the sense that the directed landscape only measures distances between points in space-time with temporal ordering; the reverse triangle inequality can easily be reversed by a sign change, the fact it is this way is only a matter of convention. Following the notation of \cite{DDV24}, we let $\mathcal{E}$ denote the space of all continuous functions $e:  \R^4_\uparrow\to 
\R$ satisfying the reverse triangle inequality
\[
e(p; q) + e(q; r) \le e(p; r)
\]
for all points $p = (x, s), q = (y, t), r = (z, u)$ with $x, y, z\in \R$ and $s < t< u$. We will henceforth equip $\mathcal E$ with the topology of uniform convergence on bounded sets, which is strictly finer than the topology of uniform convergence on compact sets, needing uniform convergence near the boundary $s=t$ in $\R^4_\uparrow$.

As was the case for the directed landscape in \eqref{eq: length dir land}, we can define a notion of length associated to a continuous paths $\gamma : [s, t]\to \R$. For a directed metric $e \in \mathcal E$, define the \emph{length} $\ell_e(\gamma)$ of $\gamma$ with respect to $e$ by
\begin{equation}
	\label{eq: dir metric length}
	\ell_e(\gamma) = \inf_{k \in \N} \inf_{s = r_0 < r_1 < \dots < r_k=t} \sum_{i=1}^k e(\gamma(r_{i-1}), r_{i-1}; \gamma(r_i), r_i)\,.
\end{equation}
An easy upper bound for $\ell_e(\gamma)$ is given by the triangle inequality, which guarantees that $\ell_e(\gamma) \le e(\gamma(s), s; \gamma(t), t)$; we call a path $\gamma$ an \emph{geodesic} with respect to $e$ if this bound is an equality.

By the global shape bounds in Corollary 10.7 in \cite{DOV}, $\cL^{(\e)}$ converges uniformly on bounded sets to the (deterministic) \emph{Dirichlet} (directed) metric
\[
d(x, s; y, t) := -\frac{(y-x)^2}{t-s}\,,\qquad (x, s; y,t) \in \R^4_\uparrow\,,
\]
as $\e \to 0$. Moreover, we obtain for any absolutely continuous $\gamma: [s, t] \to \R$ the length in \eqref{eq: dir metric length} is the negative Dirichlet energy, $\ell_d(\gamma) = -\int^t_s|\gamma'|^2$ which is maximised by the linear path from $(x, s)$ to $(y, t)$.

We will need the following upper tail large deviation result for the diffusively rescaled directed landscape, $\cL^{(\e)}(x, s; y, t) := \e^{2}\mathcal{L}(\e^{-1} x, s; \e^{-1}y, t)$ for $\e > 0$.

\begin{theorem}[Theorem 1.1, \cite{DDV24}]\label{thm: large dev} There exists a lower semi-continuous function $I : \mathcal E \to [0,\infty]$ such that for every Borel measurable  $A\subset \mathcal E$, as $\e\to 0$ we have
	\begin{equation*}   
		\exp((o(1)-\inf_{A^\circ} I)\e^{-3}) \le P( \cL^{(_\e)}\in A) \le \exp((o(1)-\inf_{\bar{A}}I)\e^{-3})\,,
	\end{equation*}
    where $A^\circ, \bar A$ denote the interior and closure of $A$ respectively.
\end{theorem}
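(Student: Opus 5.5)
The plan is to establish a weak large deviation principle from small-ball asymptotics and upgrade it to the full principle by exponential tightness, following the Bryc/Dembo--Zeitouni framework. Concretely, for $e\in\mathcal E$ and $\delta>0$ let $B_\delta(e)$ be the ball of radius $\delta$ around $e$ in the topology of uniform convergence on bounded sets. I would define
\[
I(e):=-\lim_{\delta\to0}\liminf_{\e\to0}\e^{3}\log P\big(\cL^{(\e)}\in B_\delta(e)\big),
\]
and show that the same value is obtained with $\limsup$ in place of $\liminf$. Once this holds, lower semicontinuity of $I$ is automatic: $I(e')\le$ the ball quantity at $e$ whenever $B_{\delta'}(e')\subset B_\delta(e)$. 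The lower bound for open $A$ then follows directly. The upper bound for closed $A$ follows from the local upper bound plus exponential tightness, since compact sets are covered by finitely many balls.

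The first step is to fix the speed and obtain finite-dimensional asymptotics.
\begin{itemize}
\item By Lemma~\ref{lemma: invariance dir land}(3), $\cL^{(\e)}(x,s;y,t)=\e^2\cL(\e^{-1}x,s;\e^{-1}y,t)$ restricted to a time slab of length $\tau$ is a rescaled Airy sheet.
\item The one-point upper tail $P(\mathcal S(0,0)>a)=\exp(-(\tfrac43+o(1))a^{3/2})$ gives $P\big(\cL^{(\e)}(0,0;0,1)>a\big)=\exp(-(\tfrac43+o(1))a^{3/2}\e^{-3})$, which identifies $\e^{-3}$ as the correct speed.
\item Lower deviations are much more costly, with tails of order $a^3$ and hence speed $\e^{-6}$. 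They are therefore negligible, which is why only an upper-tail rate function appears.
\item For a single slab, I would derive joint upper-tail asymptotics for $\e^2\mathcal S(\e^{-1}x_i,\e^{-1}y_j)$ at finitely many points. The route is the representation of the Airy sheet as last passage values across the Airy line ensemble (Definition 8.1 in \cite{DOV}), together with upper-tail estimates for the top $k$ lines. These estimates come from the Brownian Gibbs property and the Dyson Brownian motion/GUE large deviations of the top eigenvalues.
\item In this regime the cost of a configuration is the sum over $k$ disjoint, essentially linear "lifted" paths of $\tfrac43(\text{excess})^{3/2}$-type costs. This produces a rate function on finite collections of values.
\end{itemize}

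The second step assembles the landscape from independent slabs.
\begin{itemize}
\item For a time partition $s_0<\dots<s_m$, Definition~\ref{def: dir land}(2) makes the slabs independent, so their finite-dimensional LDPs tensorize.
\item The metric composition law \eqref{eq: metric comp} expresses $\cL^{(\e)}$ on the partition grid as a continuous (max-plus) function of the slab sheets. A contraction-type argument then yields upper bounds on ball probabilities.
\item For matching lower bounds, I would glue near-optimal slab configurations and use the reverse triangle inequality to check that the glued object lies within $\delta$ of $e$.
\item Refining the partition should make the grid rate functions increase to a limit. I would use a subadditivity/monotonicity argument, in the spirit of Dawson--Gärtner projective limits, to identify this limit with the small-ball exponent. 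This also yields equality of $\liminf$ and $\limsup$.
\end{itemize}

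The third step is exponential tightness in $\mathcal E$ with the bounded-set topology, including control near the diagonal $s=t$. The inputs are the shape bounds \eqref{eq: airy shape bnds} and modulus-of-continuity estimates for $\cL$ with tails $\exp(-cx^{3/2})$ (as in \cite{DOV}, Corollary 10.7). After rescaling, these show that the event that $\cL^{(\e)}$ deviates by $\eta$ from the Dirichlet metric $d$ on short time intervals has probability $\le\exp(-K\e^{-3})$, uniformly in the position of the interval. The KPZ scaling makes this uniform as the interval length tends to $0$.

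I expect the main obstacle to be the multi-point upper-tail asymptotics for a single Airy sheet, jointly over many start and end points. These require controlling several top lines of the Airy line ensemble simultaneously and matching the upper and lower bounds to obtain one rate function. The next hardest point is showing that refining the time partition neither loses nor creates cost, so that the ball limit exists.
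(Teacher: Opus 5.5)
\paragraph{The gap: your first step carries the whole theorem.}
The paper does not prove this statement. It is Theorem 1.1 of \cite{DDV24} in the reparametrisation $\e\mapsto\e^2$, used as a black box, so your proposal has to stand on its own.

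Your outer layer is standard and sound: ball exponents, a weak LDP upgraded by exponential tightness, speed $\e^{-3}$ from the Tracy--Widom upper tail, and lower deviations living at speed $\e^{-6}$. One slip: $B_{\delta'}(e')\subset B_\delta(e)$ gives $I(e')\ge$, not $\le$, the ball exponent at $e$, and $\ge$ is the direction lower semicontinuity needs.

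The genuine gap is your first step. Joint upper-tail asymptotics for $\cL^{(\e)}$ at arbitrary finite collections of point pairs in one slab are not a lemma on the way to the theorem. Combined with your second step (independence across slabs, metric composition, a projective limit over time partitions) and exponential tightness, they imply the theorem. Conversely, they follow from the theorem by contraction. So all the content sits in a step for which you give no argument.

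The proposed mechanism also does not reach that step. The top lines of the Airy line ensemble describe geometry out of the single point $(0,0)$, for example $\mathcal S(0,\cdot)=\mathcal A_1$. By contrast, $\mathcal S(x,\cdot)$ for $x\neq 0$ is obtained only through a limit of last passage values from points at depth $k\to\infty$ in the ensemble. Hence large deviations of finitely many top lines (GUE or Dyson Brownian motion) do not directly give joint tails at several starting points.

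The answer you predict is also wrong in general. Optimal configurations are branching networks, and a shared segment is paid for once. Take $u_1=(0,0;0,1)$ and $u_2=(h,0;h,1)$. The rate of $\{\cL^{(\e)}(u_1)\ge a,\ \cL^{(\e)}(u_2)\ge a\}$ tends to $\frac43a^{3/2}$ as $h\to0$. The cheap configuration is one lifted vertical trunk, joined to the endpoints of $u_2$ by connectors of duration $h$ that lose only $O(h)$. A sum over disjoint lifted paths would instead give $\frac83a^{3/2}$. So an upper bound built on the disjoint-path picture is false.

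\paragraph{The missing idea: reduce to one-point tails by slicing time.}
What is missing is the reduction that lets \cite{DDV24} start from one-point tail bounds alone.
\begin{itemize}
\item By KPZ scaling, an excess of $\rho\tau$ over the Dirichlet metric across a slab of duration $\tau$ has one-point cost $\frac43\rho^{3/2}\tau$, which is linear in $\tau$.
\item After slicing time finely, independence across slabs, together with one-point and local-supremum upper tails (such as \cite[Proposition 3.9]{DDV24}), assembles the cost of a lifted network additively. This is how $\frac43\int\sqrt{\rho_\mu}\,d\mu$ arises.
\item Branch points of the network occupy negligible time.
\item Within a thin slab, distinct lifted segments are macroscopically separated, hence infinitely far apart in that slab's natural KPZ units. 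So one needs only decoupling of far-apart upper-tail events, not genuinely multi-point asymptotics.
\end{itemize}
That thin-slab, well-separated regime is exactly where your ``disjoint, essentially linear paths'' picture is accurate.

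Your second step gestures at time refinement, but as written it defers back to the general multi-point slab asymptotics. As a result, the existence of the ball limit is never established. That limit, the equality of $\liminf$ and $\limsup$, is the real content of the weak LDP.
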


In other words (ignoring some properties of the rate function, see \cite{DDV24} for the full statement), the family $(\cL^{(\e)})_\e$ satisfies a large deviation principle with speed $\e^{-3}$ and good rate function $I$. Note our scaling differs from that in \cite{DDV24} as we take $\e^2$ instead of the $\e$ appearing in their statement.

As mentioned above, for the Dirichlet metric the length of an absolutely continuous path is its negative Dirichlet energy. More generally, the metrics in Theorem \ref{thm: large dev} with finite rate can be considered as perturbations of the Dirichlet metric, with certain regions in $\R^2$ gaining additional weight. We will be interested in the fact that the rate function can be easily computed for a certain class of metrics we shall define below.

Given a singular measure $\mu$ on $\R^2$, define a directed metric $e_\mu$ as follows. For a path $\gamma:[s, t] \to \R$, let $\mathfrak{g} \gamma=\{(\gamma(r),r):r\in[s,t]\}$ denote the \textbf{graph} of $\gamma$, and define
\begin{equation}
	\label{eq: emu}
	e_\mu(x,s;y,t)=\sup_\gamma \mu(\mathfrak{g} \gamma)+\ell_d(\gamma)\,, 
\end{equation}
where the supremum is over all $\gamma\in H^1$, the set of all paths $\gamma$ with $\int^t_s |\gamma'|^2 < \infty$ with domain $[s,t]$ satisfying $\gamma(s)=x$, $\gamma(t)=y$. Informally, the metric $e_\mu$ rewards paths that maximise the $\mu$-value of their trace on $\mathrm{supp}(\mu)$, while being penalised for having excessive fluctuations (due to the Dirichlet energy term). The interested reader can consult the discussion in \cite[Section 1.1]{DDV24} for a detailed literature review of the above type of directed metric.

In our application of the large deviation framework established in \cite{DDV24} in Theorem \ref{thm: distinct-displacement-laws}, we will only consider $\mu$ supported on the graph of a piecewise linear function $f: [s, t] \to \R$, see Figure \ref{fig: planted metric} for an illustration. More precisely, we will consider measures of the form
\[
\mu(A) := c\int^t_s \mathbf{1}_{\{(f(u), u)\in A\}}\mathrm{d}u\,,
\]
for some constant $c >0$ and all $A\subseteq\R^2$ Borel measurable. This is an example of what is called a \emph{planted network measure} in \cite{DDV24}. Note the time-marginal of the restricted measure $\mu|_{\mathfrak{g} f}$ has Lebesgue density $\rho_\mu = c$ on $[s, t]$.

\begin{figure}[h]
    \centering
    \includegraphics[width=0.5\linewidth]{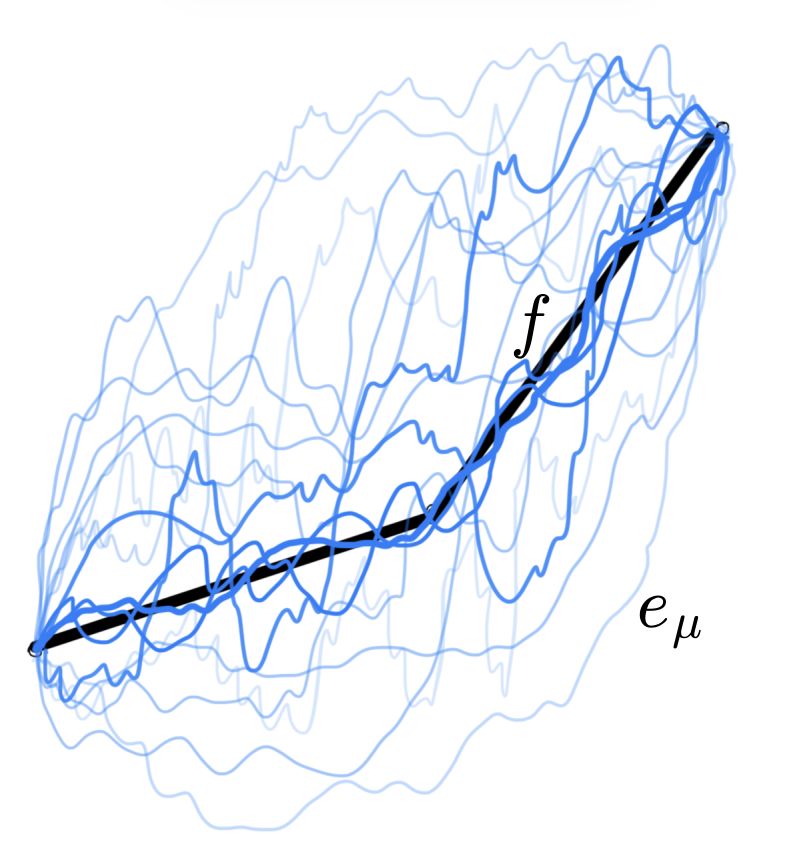}
    \caption{Illustration of a planted metric $e_\mu$ with various finite Dirichlet energy paths (in \color{blue} blue{\color{black}), with intensity proportional to their lengths in $e_\mu$, $\ell_{\mu}(\cdot)$. The length $\ell_{\mu}(\cdot)$ penalises deviation from $f$ due to the measure $\mu$ `planting' $f$ in space-time as well as rapid oscillations, due to the Dirichlet energy present in the construction of $e_\mu$.}}
    \label{fig: planted metric}
\end{figure}
Now, the rate function in \cite[Theorem 1.1]{DDV24} can be computed as
\begin{equation}\label{eq: rate function}
    I(e_\mu)=\frac43 \int_{\R^2} \sqrt{\rho_\mu}d\mu = \frac43 c^{3/2}(t-s)\,.
\end{equation}

\section{Proof of absolute continuity}\label{sec: cont}
In this section, we shall prove the following theorem.
\begin{theorem}\label{thm: cont}
For any $t\in [0,1/2)$, consider the process $\eta_t:[0,1/2]\mapsto\R$ defined in \eqref{eq: defeta}.
Let $0< t<u<1/2$. Then the laws of $\eta_t$ and $\eta_u$ are mutually absolutely continuous.
\end{theorem}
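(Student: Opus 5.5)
Write $\Pi$ for the geodesic from $(0,0)$ to $(0,1)$. The plan is to express $\eta_t$ as a measurable function of $\Pi(t)$, the landscape on the strip $[t, t+1/2]$, and boundary data encoding what happens before time $t$ and after time $t+1/2$. We then transport everything from time $t$ to time $u$ using the symmetries of Lemma \ref{lemma: invariance dir land}. The only mismatch will come from the boundary data, whose laws differ; we absorb this using Theorem \ref{thm: mut abs cont airy bm}.

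\textbf{Step 1: decomposition.} Since $t>0$ and $t+1/2<1$, the metric composition law \eqref{eq: metric comp} shows that $\Pi|_{[t,t+1/2]}$ is the unique geodesic, in the sense of Lemma \ref{lemma: geod bdry cond}, from $(h_t,t)$ to $(g_t,t+1/2)$. Here
\[
h_t(x)=\cL(0,0;x,t),\qquad g_t(y)=\cL(y,t+1/2;0,1).
\]
By independent increments, the triple consisting of $h_t$, $g_t$ and $\cL|_{[t,t+1/2]}$ is independent, and $\eta_t=F(h_t,\cL|_{[t,t+1/2]},g_t)$ for a fixed measurable map $F$. The map $F$ picks the maximiser $(x,y)$ of $h_t(x)+\cL(x,t;y,t+1/2)+g_t(y)$ and returns the increments of the resulting geodesic. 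This $F$ is invariant under adding constants to $h$ or $g$, so only the increments $h_t-h_t(0)$ and $g_t-g_t(0)$ matter.

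\textbf{Step 2: transport to time $u$.} By time stationarity, $\cL|_{[t,t+1/2]}$ and $\cL|_{[u,u+1/2]}$ have the same law after a time shift, and $\eta_u=F(h_u,\cL|_{[u,u+1/2]},g_u)$. By independence it therefore suffices to show two things:
\begin{itemize}
\item the laws of $h_t-h_t(0)$ and $h_u-h_u(0)$ are mutually absolutely continuous;
\item the laws of $g_t-g_t(0)$ and $g_u-g_u(0)$ are mutually absolutely continuous.
\end{itemize}
Both are needed as random functions on $\R$, or at least as far as they affect $F$.

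By KPZ rescaling and the Airy sheet marginals, $h_t(\cdot)\overset d= t^{1/3}\A_1(t^{-2/3}\cdot)$, and similarly for $g_t$ with $1/2-t$ in place of $t$. By Theorem \ref{thm: mut abs cont airy bm}, on any compact $[-b,b]$ the increments of $h_t$ and of $h_u$ are each mutually absolutely continuous with respect to rate-2 Brownian motion. Brownian scaling does not change the rate-2 quadratic variation, so they are mutually absolutely continuous with each other.

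\textbf{Step 3: localization (main obstacle).} Absolute continuity holds only on compacts, while the maximiser in $F$ ranges over $\R$. The plan is a truncation:
\begin{itemize}
\item Let $E_b$ be the event that the maximiser $(\Pi(t),\Pi(t+1/2))$ lies in $[-b,b]^2$. On $E_b$ we have $\eta_t=F(h_t^b,\cL,g_t^b)$, where $h^b,g^b$ are the restrictions set to $-\infty$ outside $[-b,b]$, as in the Remark following Lemma \ref{lemma: geod bdry cond}.
\item By the shape bounds \eqref{eq: airy shape bnds}, $\PP(E_b)\to1$ as $b\to\infty$.
\end{itemize}
Now take $A$ with $\PP(\eta_u\in A)=0$. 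Mutual absolute continuity on $[-b,b]$, together with independence, gives $\PP(F(h_t^b,\cL,g_t^b)\in A)=0$. Hence $\PP(\eta_t\in A)\le\PP(E_b^c)\to0$.

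This argument requires $F(h^b,\cdot,g^b)$ to be defined almost surely, that is, to have a unique maximiser, under the laws for both $t$ and $u$. Lemma \ref{lemma: geod bdry cond} supplies this for each choice of boundary data. The reverse implication is symmetric. The delicate points are the independence bookkeeping and checking that the truncation does not depend on which time we are at.
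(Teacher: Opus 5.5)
Your Steps 1--2 match the paper's setup, but Step 3 has a genuine gap, at exactly the point that makes this theorem nontrivial.

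From $\PP(\eta_u\in A)=0$ you may only conclude $\PP(\eta_u^b\in A,\,E_b^u)=0$, where $\eta_u^b:=F(h_u^b,\cL,g_u^b)$. This is because $\eta_u$ and $\eta_u^b$ agree only on $E_b^u$. To push nullness to time $t$ through the mutual absolute continuity of the truncated data, you need $\PP(\eta_u^b\in A)=0$ for the truncated object itself, and that does not follow. On $(E_b^u)^c$ the truncated problem selects a different geodesic, with endpoints forced into $[-b,b]^2$, and its law is not controlled by the law of $\eta_u$.

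Nor can you intersect with $E_b^u$ first and then transfer. The event $E_b^u$ depends on the boundary data outside $[-b,b]$, so it is not an event of the truncated inputs, and local absolute continuity says nothing about it. Your final inequality $\PP(\eta_t\in A)\le\PP(\eta_t^b\in A)+\PP((E_b^t)^c)$ is fine; what is unjustified is $\PP(\eta_t^b\in A)=0$. The ``delicate points'' you list at the end are not where the difficulty lies.

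The missing ingredient is that the law of the truncated geodesic $\eta^b_u$ is absolutely continuous with respect to the law of $\eta_u$. The paper proves this by a tilting argument in three steps.

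First, add $n\varphi_{b,k}$ to both recentred boundary profiles. Here $\varphi_{b,k}$ is a tent function equal to $1$ on $[-b,b]$ and vanishing outside $[-b-1/k,b+1/k]$. The tilt is nonnegative, compactly supported and of finite Dirichlet energy. So the Brownian Gibbs property and the Cameron--Martin theorem show that the tilted profiles are absolutely continuous with respect to the untilted ones on all of $\R$; nonnegativity is what preserves the non-intersection conditioning. Hence the tilted geodesics $\Gamma^{n,k,b}$ inherit $\PP(\cdot\in A)=0$ from $\eta_u$.

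Second, the truncated maximiser almost surely lies strictly inside $[-b,b]^2$. Each coordinate maximises the sum of a boundary profile and an independent, locally Brownian KPZ fixed point profile, so it has no atom at $\pm b$.

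Third, for all sufficiently large $n,k$ the tilted maximiser therefore coincides with the truncated one, so $\Gamma^{n,k,b}=\Gamma^b$ eventually, almost surely. This yields $\PP(\eta^b_u\in A)=0$.

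Only after this step does your transfer through local mutual absolute continuity, followed by the limit $b\to\infty$, go through.
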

\begin{proof} Let $0< t<1/2$. Consider the random variables 
\[f_t(x):=\cL(0,0;x,t), \qquad g_t(y):=\cL(y,t+1/2;0,1),\qquad h_t(x,s;y,v)=\cL(x,t+s; y,t+v)\,,\]
for $x,y\in \R$ and $0\leq s<v\leq 1/2$. Then $f_t, g_t$ and $h_t$ are independent because of the independent increment property of the directed landscape, see Definition \ref{def: dir land}. Define
\[(X_t,Y_t)=\argmax_{x,y}(f_t(x)+h_t(x,0;y,1/2)+g_t(y))\,,\]
and $\Gamma_t$ be the almost surely unique geodesic from $(X_t,0)$ to $(Y_t,1/2)$ in the directed landscape $h_t$ (guaranteed by Lemma \ref{lemma: geod bdry cond}). Then, we have by inspection the identification
\[\eta_t(\cdot)=\Gamma_t(\cdot)-\Gamma_t(0)\,.\]
For $0<t<u<1/2$, by the time stationarity of the directed landscape, Lemma \ref{lemma: invariance dir land}, 
\[h_t(x,s;y,v)\overset{d}{=}h_u(x,s;y,v)\,,\qquad x,y\in \R\,, 0\leq s<v\leq 1/2\,,\]
as processes in $\R^4_\uparrow$. Also, by Theorem \ref{thm: mut abs cont airy bm} the recentred Airy$_2$ process is mutually absolutely continuous with respect to Brownian motion on any compact set and the Wiener measure is invariant under Brownian scaling, we have that on any compact set, the laws of $f_t-f_t(0)$ and $f_u-f_u(0)$ are mutually absolutely continuous, and so are the laws of $g_t-g_t(0)$ and $g_u-g_u(0)$. 

We now show this local mutual absolute continuity is sufficient to yield that the laws of $\eta_t$ and $\eta_u$ are mutually absolutely continuous on $\mathcal{C}[0, 1/2]$. We will proceed by a tilting argument.

Indeed, for any $b> 0$, on the event $\Gamma_t(0), \Gamma_t(1/2)\in [-b, b]$, we have $\Gamma_t = \Gamma^b_t$, where $\Gamma^b_t$ is the almost surely unique geodesic (by Lemma \ref{lemma: geod bdry cond}) from $(X^b_t,0)$ to $(Y^b_t,1/2)$ in the directed landscape $h_t$, where the endpoints satisfy
\begin{align}\label{eq: maximisers trunc}
    (X^b_t,Y^b_t)=\argmax_{x,y\in \R}(f^b_t(x)+h_t(x,0;y,1/2)+g^b_t(y))\in [-b, b]^2\,,
\end{align}
where for a function $f$, $f^b$ denotes the function agreeing with $f$ on $[-b, b]$ and set to $-\infty$ elsewhere. 

By the above mutual absolute continuity of $f^b_t-f_t(0), f^b_u-f_u(0)$ and $g^b_t-g_t(0), g^b_u-g_u(0)$ on compacts, we have the laws of $\Gamma^b_t, \Gamma^b_u$ are mutually absolutely continuous on $\mathcal{C}[0, 1/2]$. 

Now, for $b> 0$, $k\ge 1$, consider the `tent' function
\[
\varphi_{b, k}(x) = \begin{cases}
    &0 \,,\quad x\le -b-1/k\,,\\
    &k(x+b)+1\,,\quad x\in [-b-1/k, -b]\,,\\
    &1 \,,\quad x\in [-b, b]\,,\\
    &-k(x-b)+1 \,,\quad x\in [b, b+1/k]\,,\\
    &0 \,,\quad x\ge b+1/k\,.
\end{cases}
\]
Observe in particular that $\varphi_{b, k}$ is non-negative everywhere, $\varphi_{b, k} =  1$ on $[-b, b]$ and vanishes outside $[-b-1/k, b+1/k]$. Moreover, each of $X^b_t, Y^b_t$ in \eqref{eq: maximisers trunc} are locations of maximisers of pointwise sums of independent locally Brownian processes,
\begin{align*}
    X^b_t \in \argmax_{x \in [-b, b]}(f_t(x) + j(x))\,,\quad j(x) = \max_{y\in [-b, b]}h_t(x,0;y,1/2)+g_t(y)\\
Y^b_t \in \argmax_{y \in [-b, b]}(k(y)+g_t(y))\,,\quad k(y) = \max_{x\in [-b, b]}f_t(x)+h_t(x,0;y,1/2)
\end{align*}
respectively. Indeed, $j, k$ are independent from $f_t$ and $g_t$ respectively and by the symmetries of the directed landscape, Lemma \ref{lemma: invariance dir land} are equal in law to KPZ fixed points (recall from \eqref{eq: KPZ fixed pt}) started from different initial data, which are locally absolutely continuous with respect to Brownian motion, \cite{sarkar2021brownian}. Thus, $\PP(X^b_t = \pm b) = \PP(Y^b_t = \pm b)=0$ and so the maximisers $(X^b_t, Y^b_t)$ do not lie on the boundary of $[-b, b]^2$ almost surely. 

We now show that the unique geodesics $\Gamma^{n,k, b}_t$ from $(X^{n, k, b}_t, 0)$ to $(Y^{n, k, b}_t, 1/2)$ with
\[
(X^{n, k, b}_t,Y^{n, k, b}_t)=\argmax_{x,y\in \R}(f_t(x)-f_t(0)+n\varphi_{b, k}(x)+h_t(x,0;y,1/2)+g_t(y)-g_t(0)+n\varphi_{b, k}(y))
\]
coincide with the coupled geodesic $\Gamma^{b}_t$ for all sufficiently large $n, k$. Indeed, it suffices to show for all sufficiently large $n, k$,
\begin{align*}
    &\sup_{(x,y)\in \R^2\setminus [-b, b]^2}(f_t(x)+n\varphi_{b, k}(x)+h_t(x,0;y,1/2)+g_t(y)+n\varphi_{b, k}(y))\\
    &< 2n + \max_{x,y\in [-b, b]}(f_t(x)+h_t(x,0;y,1/2)+g_t(y))\,,
\end{align*}
using $\varphi_{b, k} = 1$ on $[-b, b]$. Since the maximisers $(X^b_t, Y^b_t)$ are non-atomic by the preceding discussion, we have by continuity and compactness that almost surely there exists some random positive $\delta< b$ such that $|X^b_t|\lor |Y^b_t|\le b-\delta$ giving (note the \emph{strict} inequality below)
\begin{align*}
    &\sup_{(x,y)\in [-b-\delta, b+\delta]^2\setminus[-b, b]^2}(f_t(x)+h_t(x,0;y,1/2)+g_t(y)) < \max_{x,y\in [-b+\delta, b-\delta]}(f_t(x)+h_t(x,0;y,1/2)+g_t(y))\,.
\end{align*}
Now, taking $k > 1/\delta$, we have using $\varphi_{b, k}\le 1$ on $\R$, that in the annulus $[-b-\delta, b+\delta]^2\setminus[-b, b]^2$
\begin{align*}
    &\sup_{(x,y)\in [-b-\delta, b+\delta]^2\setminus[-b, b]^2}(f_t(x)+n\varphi_{b, k}(x)+h_t(x,0;y,1/2)+g_t(y)+n\varphi_{b, k}(y))\\
    &< 2n+ \max_{x,y\in [-b-\delta, b+\delta]}(f_t(x)+h_t(x,0;y,1/2)+g_t(y))\,.
\end{align*}
Moreover, also observe using that $\varphi_{b, k}$ vanishes outside $[-b-1/k, b+1/k]$ and $\varphi_{b, k}\le 1$ on $\R$, we estimate
\begin{align*}
    &\sup_{(x,y)\in \R^2\setminus[-b-\delta, b+\delta]^2}(f_t(x)+n\varphi_{b, k}(x)+h_t(x,0;y,1/2)+g_t(y)+n\varphi_{b, k}(y))\\
    &\le n+ \sup_{x,y\in \R^2\setminus[-b-\delta, b+\delta]^2}(f_t(x)+h_t(x,0;y,1/2)+g_t(y))\,.
\end{align*}
Finally, in the box $[-b, b]^2$, since $\varphi_{b, k} = 1$ on $[-b, b]$,
\begin{align*}
    &\max_{x,y\in [-b, b]}(f_t(x)+n\varphi_{b, k}(x)+h_t(x,0;y,1/2)+g_t(y)+n\varphi_{b, k}(y))\\
    &= 2n+\max_{x,y\in [-b, b]}(f_t(x)+h_t(x,0;y,1/2)+g_t(y))\\
    &= 2n+\max_{x,y\in [-b+\delta, b-\delta]}(f_t(x)+h_t(x,0;y,1/2)+g_t(y))\,.
\end{align*}
Now, taking $n > \sup_{(x,y)\in \R^2\setminus[-b-\delta, b+\delta]^2}(f_t(x)+h_t(x,0;y,1/2)+g_t(y))-\max_{x,y\in [-b+\delta, b-\delta]}(f_t(x)+h_t(x,0;y,1/2)+g_t(y))$, we obtain the desired conclusion.

Now, suppose $A$ is some event in $\mathcal{C}[0, 1/2]$ for which $\PP(\eta_t\in A) = \PP(\Gamma_t-\Gamma_t(0)\in A) = 0$. By the Brownian Gibbs property for the parabolic Airy$_2$ process, \cite{corwin2014brownian} and the Cameron-Martin theorem for Brownian bridges, for any $n\ge 1$, the laws of the recentred processes $f_t-f_t(0)-n+n\varphi_{b, k}$ and $g_t-g_t(0)-n+n\varphi_{b, k}$ are absolutely continuous with respect to their unperturbed versions $f_t-f_t(0)$ and $g_t-g_t(0)$ respectively, on paths on $\R$. This means the geodesics $\Gamma^{n, k, b}_t$ are absolutely continuous with respect to $\Gamma_t$ for all $n, k , b > 0$. Thus, we have $\PP(\Gamma^{n, k , b}_t-\Gamma^{n, k , b}_t(0)\in A) = 0$ for all $n, k , b> 0$. By the preceding discussion, $\Gamma^{n, k, b}_t$ almost surely eventually (in $k, n)$ coincides with $\Gamma^b_t$. Passing to the limits as $n, k \to \infty$ we thus obtain $\PP(\Gamma^{b}_t - \Gamma^b_t(0) \in A) = 0$ for all $b> 0$. By the aforementioned local mutual absolute continuity of the truncated environments $\eta^b_t = \Gamma^b_t-\Gamma^b_t(0)$ and $\eta^b_u = \Gamma^b_u-\Gamma^b_u(0)$, $\PP(\eta^b_u \in A) = 0$ for $b>0$. Taking $b\to \infty$ gives $\PP(\eta_u\in A) = 0$, concluding the proof of absolute continuity.

The converse is obtained arguing verbatim, replacing $t$ with $u$.
\end{proof}

\section{Proof of singularity}\label{sec: sing} In this section, we will prove the mutual singularity of the laws of the directed geodesic re-centred at its starting point and the geodesic re-centred at any interior point.
\begin{theorem}\label{thm: sing}
For any $t\in [0,1/2)$, consider the process $\eta_t:[0,1/2]\mapsto\R$ defined in \eqref{eq: defeta}. Let $0<u<1/2$. Then the laws on $\mathcal{C}[0, 1/2]$ of $\eta_0$ and $\eta_u$ are singular.
\end{theorem}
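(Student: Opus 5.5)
Singularity will come from a path functional that is a.s.\ constant under both laws but takes different values. The functional is a Ces\`aro average, over dyadic scales, of a bounded test function applied to rescaled increments near the left endpoint of the path. Concretely, for $\e>0$ set $Z_\e(s)=\e^{-2}\eta(\e^3 s)$ for $s\in[0,1]$. By the KPZ rescaling in Lemma~\ref{lemma: invariance dir land}, geodesic increments scale like time$^{2/3}$, so this is the natural normalisation. Fix a bounded continuous $F:\C_0[0,1]\to\R$ and define
\[
M_N(\eta)=\frac1N\sum_{j=1}^N F\bigl(Z_{2^{-j}}\bigr).
\]

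The first step is to identify the laws of $Z_\e$ as $\e\to0$. For $\eta_u$ with $u>0$ the geodesic passes through an interior point. Zooming in there under KPZ scaling, the local environment converges to a full directed landscape, and the geodesic converges to the two-sided infinite geodesic (a ``Brownian--Airy'' type limit); call its law $\nu_{\mathrm{int}}$. For $\eta_0$ the zoom is at a fixed starting point $(0,0)$. There the geodesic converges to the semi-infinite geodesic from $(0,0)$ in a landscape with nothing to its past; call its law $\nu_{\mathrm{st}}$. One needs $\nu_{\mathrm{st}}\neq\nu_{\mathrm{int}}$, and this is the content of Theorem~\ref{thm: distinct-displacement-laws}. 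To separate the laws I would use $F$ depending on $Z(1)$ and compare upper tails, $\PP(Z(1)>\lambda)$ as $\lambda\to\infty$. Rescaling diffusively, the event that the geodesic displaces by $\lambda$ in unit time costs, under Theorem~\ref{thm: large dev}, the minimum of $I(e_\mu)$ over planted metrics $e_\mu$ that make such a path a geodesic. At the starting point the planted line $f$ can only be extended forward. At an interior point the optimal $f$ can extend backward as well, with a piecewise-linear kink at the chosen point, and the cost is computed via \eqref{eq: rate function}. Optimising over the slope and the density $c$ should give different constants in the exponent $\exp(-C\lambda^{3})$, so the tails, and hence the laws, differ.

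The second step is a law of large numbers: $M_N(\eta_0)\to\E_{\nu_{\mathrm{st}}}F$ a.s., and $M_N(\eta_u)\to\E_{\nu_{\mathrm{int}}}F$ a.s. (Theorems~\ref{thm: cesaro brown-airy} and \ref{thm: cesaro brown-bessel}). For this I would bound covariances $\mathrm{Cov}(F(Z_{2^{-i}}),F(Z_{2^{-j}}))\le C2^{-c|i-j|}$. Scales $2^{-i}$ and $2^{-j}$ probe disjoint time windows $[2^{-3j},2^{-3i}]$ up to a small overlap. By the independent increments in Definition~\ref{def: dir land} and localisation of geodesics (shape bounds \eqref{eq: airy shape bnds}), the geodesic increment on the small window is determined, up to an event of small probability, by the landscape in that window together with a starting location. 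The rescaled increment is insensitive to that location by spatial stationarity. Summable covariance decay gives $\mathrm{Var}(M_N)=O(1/N)$; a.s.\ convergence then follows along $N=k^2$ by Borel--Cantelli, and for intermediate $N$ from boundedness of $F$.

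The main obstacle is this decorrelation. Conditioning on the coarse-scale geodesic affects the fine-scale environment: the geodesic is a maximiser, so conditioning biases the landscape near it. I would handle this by coupling with a truncated geodesic built from the landscape restricted to the small window, using tilting and absolute continuity as in the proof of Theorem~\ref{thm: cont}, and showing the two agree at rescaled level with high probability.

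With both limits established, the event $\{\lim_N M_N=\E_{\nu_{\mathrm{st}}}F\}$ has probability one under the law of $\eta_0$ and probability zero under the law of $\eta_u$. This gives singularity, and combined with Theorem~\ref{thm: cont} it proves Theorem~\ref{thm: main}.
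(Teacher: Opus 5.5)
Your architecture matches the paper's: dyadic Ces\`aro averages of a bounded functional of rescaled increments, distinct local limits at the start and in the bulk, then a $0$--$1$ event. However, neither load-bearing step is actually supplied.

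\textbf{The law of large numbers.} A summable covariance bound across scales needs quantitative decorrelation, and your proposed remedy cannot give it. The tilting in the proof of Theorem~\ref{thm: cont} is purely qualitative: the Cameron--Martin tilts $n\varphi_{b,k}$, with $n,k\to\infty$, only transfer null sets and give no rate. The appeal to ``insensitivity to the starting location by spatial stationarity'' also fails, because that location is the geodesic position, which depends on the same environment. In addition, Theorem~\ref{thm: cesaro brown-bessel} does not assert a.s.\ convergence in the bulk.

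The missing idea is that no law of large numbers is needed at the interior point. The paper only proves $\E\bigl[V\,\phi\bigl(2^{2i}(\Pi(t+2^{1-3i})-\Pi(t+2^{-3i}))\bigr)\bigr]\to\E\phi(Y_1-X_1)\,\E V$ for each fixed bounded landscape-measurable $V$. To get this, it approximates $V$ by a variable measurable with respect to the landscape outside a $\delta$-window around time $t$. It then conditions on the outside landscape and applies \cite[Corollary 6.7]{sarkarthrehalves2022} to the resulting truncated boundary profiles. Finally it takes $V=\mathbf 1_{\{G(n_k)\to c\}}$, with $G$ as in \eqref{eq: cesaro 2}, to get probability zero.

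An LLN is needed only at the starting point, and there it comes without rates. The increments of the semi-infinite geodesic $\Pi_\infty$ at scale $2^{-i}$ are a fixed functional of $T^i\cL$, where $Tf(x,s;y,t)=2f(x/4,s/8;y/4,t/8)$ preserves the law of $\cL$ and is mixing by independence of disjoint time slices. Birkhoff's theorem then applies. The limit transfers to $\eta_0$ because the law of $\Pi|_{[0,1/2]}$ is absolutely continuous with respect to that of $\Pi_\infty|_{[0,1/2]}$.

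\textbf{Separating the two limits.} This step is heuristic and partly mis-aimed. The bulk limit is the Brown--Bessel environment $(B-R,\cL,-B-R)$, not a two-sided infinite geodesic in a full landscape; ``Brown--Airy'' is the paper's name for the starting-point limit. Your picture of extending a planted path backwards in the bulk does not engage with this environment, and in fact the bulk tail is the lighter one.

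Moreover, exhibiting a planted metric inside the event only gives a \emph{lower} bound on a probability via Theorem~\ref{thm: large dev}. Separating the laws also needs an upper bound on the other side, and the infimum of $I$ over the whole closed event does not reduce to planted metrics without proof. The paper uses the LDP, together with Schilder's theorem for $W$ and cubic localisation of the maximiser, only for the starting-point lower bound. It proves the bulk bound $-5/12$ directly, by Chernoff bounds over unit boxes. These use $R\ge0$, $\sum_i\E e^{-R_i}<\infty$, Gaussian increments of $B$, and the sharp local upper tail of the Airy sheet from \cite{DDV24}.

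\textbf{Choice of observable.} Theorem~\ref{thm: distinct-displacement-laws} concerns the increment over $[\e^3,2\e^3]$, whose starting-point limit is $Y-X$. Your $Z_\e(1)$ converges at the start to $X$, which that theorem does not cover. Define $Z_\e$ on $[0,2]$ and take $F(Z)=\phi(Z(2)-Z(1))$ with $\phi$ from Corollary~\ref{cor: distinguishing-observable}. With that change, and the bulk step replaced by the non-convergence statement of Theorem~\ref{thm: cesaro brown-bessel}, your closing paragraph becomes exactly the paper's proof.
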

\begin{remark}Theorem \ref{thm: cont} and Theorem \ref{thm: sing} together prove Theorem \ref{thm: main}.
\end{remark}

Before proving Theorem \ref{thm: sing}, we need a result establishing that the `environment' near the starting point of a directed geodesic has a scaling limit, which we identify. We state it in slight more generality than needed, considering directed geodesic from $(h_0, 0)$ to $(0, 1) $ where $h_0$ is some measurable function not equal $-\infty$ everywhere with sufficiently rapid decay at infinity. For our purposes, we only need the result for the \emph{narrow wedge} at zero, that is setting $h_0 = 0$ at zero and $-\infty$ elsewhere.

\begin{theorem}[Landscape near the starting point of a geodesic]\label{thm: geo start zero} Let $h_0:\R\to \R\cup\{-\infty\}$ be an upper-semi continuous initial condition with $h_0(0) >-\infty$ satisfying $h_0(x)<c-|x|$ for some $c\in \R$, and let
\[h_\e(x)=\max_{y\in \R}(\e^2 h_0(\e^{-4}y)+\cL(y,0;x,\e^3))\,,\quad \text{ and }\quad  q_\e(x)=\cL(x,2\e^3;0,1)\,.\]
Define
\[f_\e(x):=\e^{-1}(h_\e(\e^2x)-h_\e(0))\,,\qquad g_\e(x):=\e^{-1}(q_\e(\e^2x)-q_\e(0))\,,\]
and
\[\cL_\e(x,s;y,t):=\e^{-1}\cL(\e^2x,\e^3+\e^3s;\e^2y,\e^3+\e^3t)\,, \qquad x,y\in \R, 0\leq s<t\leq 1\,.\]
Then 
\[(f_\e,g_\e,\cL_\e)\eqd (A-\mathcal{A}(0), B,\cL)\,,\qquad \e \to 0\]
where $\mathcal{A}_1$ is the parabolically shifted Airy$_2$ process, $B$ is a rate two Brownian motion, $\cL$ is a directed landscape from time $0$ to $1$, and all three objects are independent.

\end{theorem}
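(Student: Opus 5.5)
The plan is to use the independent increment property of the directed landscape (Definition \ref{def: dir land}) to reduce the statement to three separate marginal convergences. For each fixed $\e\in(0,1/4)$:
\begin{itemize}
\item $h_\e$ is a function of $\cL$ restricted to the time interval $[0,\e^3]$;
\item $\cL_\e$ is a function of $\cL$ restricted to $[\e^3,2\e^3]$;
\item $q_\e$ is a function of $\cL$ restricted to $[2\e^3,1]$.
\end{itemize}
These time intervals are disjoint, so $f_\e,\cL_\e,g_\e$ are independent for each $\e$. Joint convergence in law to an independent triple therefore follows once each coordinate converges separately, in the topology of uniform convergence on compacts.

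\textbf{The landscape coordinate.} Here no limit is needed. Apply the KPZ rescaling of Lemma \ref{lemma: invariance dir land} with parameter $\e^{-1}$, together with time stationarity. This gives
\[
\e^{-1}\cL(\e^2x,\e^3+\e^3s;\e^2y,\e^3+\e^3t)\overset d=\cL(x,s;y,t)
\]
jointly in all arguments, so $\cL_\e\overset d=\cL$ exactly, for every $\e$.

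\textbf{The initial-condition coordinate $f_\e$.} Substitute $y=\e^2y'$ in the definition of $h_\e$ and apply the same rescaling. This gives, as processes in $x$,
\[
\e^{-1}h_\e(\e^2x)\overset d=\max_{y'}\big(k_\e(y')+\cL(y',0;x,1)\big),\qquad k_\e(y'):=\e\,h_0(\e^{-2}y').
\]
The rescaled initial condition $k_\e$ degenerates to a narrow wedge, because of two facts:
\begin{itemize}
\item $k_\e(0)=\e h_0(0)\to0$;
\item the hypothesis $h_0(x)<c-|x|$ gives $k_\e(y')\le \e c-\e^{-1}|y'|$.
\end{itemize}
These yield a sandwich, for any fixed $\delta>0$ and all small $\e$:
\[
\e h_0(0)+\cL(0,0;x,1)\le \max_{y'}\big(k_\e(y')+\cL(y',0;x,1)\big)\le \e c+\max_{|y'|\le\delta}\cL(y',0;x,1),
\]
uniformly for $x$ in a compact set. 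The right-hand inequality needs the maximizer to lie in $[-\delta,\delta]$. For $|y'|>\delta$ the penalty $-\e^{-1}|y'|$ dominates, using the shape bound \eqref{eq: airy shape bnds} transported to $\cL(\cdot,0;\cdot,1)$, which grows only quadratically with a random constant. By continuity of $\cL$, the difference between the two sides tends to $0$ as $\e,\delta\to0$. Hence the processes $\e^{-1}(h_\e(\e^2\cdot)-h_\e(0))$ converge in law to $\cL(0,0;\cdot,1)-\cL(0,0;0,1)\overset d=\A_1-\A_1(0)$.

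\textbf{The terminal coordinate $g_\e$, the main obstacle.} By flip symmetry and KPZ scaling, $q_\e(x)\overset d=\tau^{1/3}\A_1(\tau^{-2/3}x)$ with $\tau=1-2\e^3\to1$. So it suffices to show that
\[
\e^{-1}\big(\A_1(\e^2\cdot)-\A_1(0)\big)\to B,
\]
a two-sided rate-two Brownian motion, uniformly on compacts. The factor $\tau$ contributes a deterministic rescaling converging to the identity.

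For this I will use Theorem \ref{thm: mut abs cont airy bm} on $[-1,1]$, applied to the increments from $-1$ and shifted to be based at $0$. It gives $\E[F(\A_1\text{-increments})]=\E[D(W)F(W)]$ for a rate-two Brownian motion $W$ and a density $D\in L^1$. Now take a bounded continuous $F$ depending only on the rescaled window $\e^{-1}(W(\e^2\cdot)-W(0))$ on $[-M,M]$. I claim that
\[
\E[D\,F]-\E[D]\,\E[F]\to0 .
\]
To see this, approximate $D$ in $L^1$ by bounded functions $D'$ of finitely many increments of $W$ away from $0$, or of increments on dyadic scales bounded below. For $\e$ small, the window increments on $[-\e^2M,\e^2M]$ are independent of $D'$, up to an explicit Gaussian correction from splitting the increments at the window boundary, which vanishes as $\e\to0$. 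Since $\E[D]=1$ and Brownian scaling makes the window process itself exactly a rate-two Brownian motion, this gives the claim.

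I expect this asymptotic decoupling of the blown-up window from the Radon–Nikodym factor to be the step needing most care. If the direct approximation is awkward, the fallback is the Brownian Gibbs property: conditionally on the outside data, $\A_1$ on $[-1,1]$ is a Brownian bridge reweighted by a non-intersection probability, and the local blow-up of a Brownian bridge at an interior point is a two-sided Brownian motion.

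Combining the three marginal limits with the independence for each $\e$ yields the stated joint convergence.
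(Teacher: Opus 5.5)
Your proposal is correct and follows the paper's proof essentially step for step. It uses the same reduction via independent temporal increments, the exact identity $\cL_\e\overset d=\cL$, and the same narrow-wedge sandwich for $f_\e$. In that sandwich, the input you need from \eqref{eq: airy shape bnds} is the one-sided bound $\cL(y',0;x,1)\le \mathfrak C+c\log^{2/3}(2+|x|+|y'|)$, obtained by discarding the helpful $-(x-y')^2$ term. A merely quadratic growth bound, as your wording suggests, would not be beaten by the linear penalty $-\e^{-1}|y'|$.

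The only difference is in the $g_\e$ step. The paper combines the bounded density of Theorem \ref{thm: mut abs cont airy bm} with Lemma 4.3 of \cite{sarkarthrehalves2022} to get the Brownian blow-up. You instead prove that step directly: you approximate the density in $L^1$ by functionals of $W$ away from the blow-up point, and use that a Brownian bridge blown up at an interior point is asymptotically a two-sided Brownian motion. Your route needs only $D\in L^1$ rather than a bounded density.
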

\begin{proof}
 First notice that by translation invariance and $1:2:3$ scaling $\mathcal{L}_{\epsilon}\stackrel{d}{=}\mathcal{L}$ for all $\epsilon\in (0,1/2^{1/3})$. Moreover, by independence of disjoint temporal increments of the directed landscape, it suffices to show $f_{\e}\stackrel{d}{\implies}\mathcal{A}-\mathcal{A}(0)$ and $g_{\e}\stackrel{d}{\implies} B$ separately in distribution in the topology of local uniform convergence as $\epsilon \to 0$.

We start with $(g_\e)_\e$. The $1: 2: 3$ scaling invariance and time stationarity of the directed landscape, Lemma \ref{lemma: invariance dir land}, give for $\e\in (0,1/2^{1/3})$ the distributional equalities on $\mathcal{C}$,
\[
g_{\e}(x) \overset d= \delta^{-1}(\cL(\delta^2 x,0;0,1)-\cL(0,0;0,1))
\]
where $\delta = \e(1-2\e^{3})^{-1/3}$, which tends to zero as $\varepsilon \to 0$. Now, recalling from Section \ref{sec: prelim} that $\cL(\cdot, 0,;0,1)$ is equal in law to the parabolically shifted $\mathrm{Airy}_2$ process, Theorem \ref{thm: mut abs cont airy bm} gives that $\cL(\cdot,0;0,1)-\cL(0,0;0,1)$ has a bounded Radon-Nikodym derivative against rate two Brownian motion on compacts. Now, by \cite[Lemma 4.3]{sarkarthrehalves2022}, we conclude $g_{\e}(\cdot)\stackrel{d}{\implies} B(\cdot)$
with respect to the topology of local uniform convergence on $\R$.

It remains to show $f_{\e}\stackrel{d}{\implies} \mathcal{A}-\mathcal{A}(0)$ as $\epsilon \to 0$. By the $1:2:3$ scaling invariance of the directed landscape, we we have the distributional identity on $\mathcal{C}$
\[
f_{\epsilon}(\cdot)\stackrel{d}{=}\tilde{h}_\e(\cdot)-\tilde{h}_\e(0)
\]
where 
\[
\tilde{h}_\e(x)=\max_{y\in \R}(\e h_0(\e^{-2}y)+\cL(y,0;x,1))\,, \qquad x\in \R\,.
\]

We now show that due to the asymptotic growth condition imposed on the initial condition $h_0$, the rescaled KPZ fixed point $\tilde{f}_\e$ converges almost surely locally uniformly to the re-centred parabolic Airy$_2$ process, equal in law to $\mathcal{L}(0, 0; \cdot, 1)-\mathcal{L}(0, 0; 0, 1)$. In the argument below, uniform constants $c> 0$ will be enlarged or reduced as needed.

Fix any compact set $K\subset \R$, $\delta > 0$. The shape estimates for the stationary Airy sheet \eqref{eq: airy shape bnds} give
\begin{equation*}
\sup_{x\in K}|\cL(x, 0; y, 1)+(x-y)^2|\le \mathfrak{C}+c\log^{2/3}(2+\sup_{x\in K}|x| + |y|)\,,\qquad \text{ for all } y\in \R 
\end{equation*}
for some universal constant $c>0$ and some $\mathfrak{C}$ satisfying $\mathbb{E}[a^{\mathfrak{C}^{3/2}}]<\infty$ for some $a>1$. Moreover, we have for all $\e>0$,
\[
\e h_0(\e^{-2}y) \le c\e - \e^{-1}|y|\,,\qquad y\in \R\,.
\]
Combining the above, we estimate for any $\e \in (0, 1/2^{1/3})$,
\[
\tilde{h}_\e(x)\le \max_{|y|\le \delta}(\e h_0(\e^{-2}y)+\cL(y,0;x,1))\lor \max_{|y|\ge \delta}(\e h_0(\e^{-2}y)+\cL(y,0;x,1))\,,\qquad x \in K\,.
\]
The first term is easily estimated as
\[
\max_{|y|\le \delta}(\e h_0(\e^{-2}y)+\cL(y,0;x,1))\le \max_{|y|\le \delta}\cL(y,0;x,1) + \e c\,,\qquad x\in K\,.
\]
For the second term we have for all $\varepsilon < c^{-1}\land 1/2^{1/3}$
\begin{align*}
   \max_{|y|\ge \delta}(\e h_0(\e^{-2}y)+\cL(y,0;x,1))&\le \max_{|y|\ge \delta}(\mathfrak{C}+c\log^{2/3}(2+\sup_{x\in K}|x| + |y|)+\e c - \e^{-1}|y|)\\
    &\le \max_{|y|\ge \delta}(\mathfrak{C}+c \sup_{x\in K}|x| + (c- \e^{-1})|y|)\\
    &\le \mathfrak{C}+c \sup_{x\in K}|x| + (c- \e^{-1})|\delta|\,,
\end{align*}
which almost surely converges to $-\infty$ as $\e  \to 0$. Thus we have almost surely for all $\e$ sufficiently small (possibly depending on $\delta >0$),
\[
\e h_0(0) + \mathcal{L}(0, 0; x, 1) \le \tilde{h}_\e(x)\le \max_{|y|\le \delta}(\e h_0(\e^{-2}y)+\cL(y,0;x,1))\le \max_{|y|\le \delta}\cL(y,0;x,1) + \e c\,,\qquad x \in K\,.
\]
Taking $\varepsilon\to 0$ we thus obtain,
\[
\limsup_{\e\searrow 0}\sup_{x\in K}|\tilde{h}_\e(x)-\mathcal{L}(0, 0; x, 1)|\le \sup_{x\in K}|\max_{|y|\le \delta}\cL(y,0;x,1)-\mathcal{L}(0, 0; x, 1)|\,.
\]
Since $K, \delta$ were arbitrary, we conclude the proof of local uniform convergence, whence we obtain the desired convergence in distribution on paths in the local uniform topology.
\end{proof}

We need the following technical lemma giving the convergence of the rescaled geodesic endpoints near the starting point.
\begin{lemma}\label{lemma: geod endpts tight} Let $h_0$ be as in the setup of Theorem \ref{thm: geo start zero}. Let $\gamma^\e$ be the geodesic from $(\e^2 h_0(\e^{-4}\cdot), 0)$ to $(0,1)$. Then the families of random variables $\e^{-2}\gamma^\e(\e^3)$ and $\e^{-2}\gamma^\e(2\e^3)$ are tight all $\e\in (0,1/100)$.
\end{lemma}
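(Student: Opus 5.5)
The plan is to write the rescaled geodesic location as the argmax of a sum of two independent rescaled profiles, and to show that this sum decays uniformly in $\e$ away from the origin. By the metric composition law, $\gamma^\e(\e^3)$ is the (a.s. unique) maximiser of $x\mapsto h_\e(x)+\cL(x,\e^3;0,1)$, with $h_\e$ as in Theorem \ref{thm: geo start zero}. Substituting $x=\e^2z$ and subtracting the values at $z=0$, the quantity $Z_\e:=\e^{-2}\gamma^\e(\e^3)$ maximises
\[
F_\e(z)+G_\e(z),\qquad F_\e(z):=f_\e(z),\quad G_\e(z):=\e^{-1}\bigl(\cL(\e^2z,\e^3;0,1)-\cL(0,\e^3;0,1)\bigr).
\]
The two summands are independent, since they depend on disjoint time slabs. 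Both vanish at $z=0$, so $F_\e(Z_\e)+G_\e(Z_\e)\ge 0$. Tightness therefore follows once, for every $\eta>0$, there are $M$ and a random envelope $\Psi$ with $\PP(\sup_{|z|\ge M}\Psi(z)\ge 0)<\eta$ and $F_\e+G_\e\le\Psi$ for all $\e\in(0,1/100)$.

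\textbf{Bound on $F_\e$.} By $1{:}2{:}3$ scaling (as in the proof of Theorem \ref{thm: geo start zero}), $F_\e\overset d=\tilde h_\e-\tilde h_\e(0)$, where $\tilde h_\e(x)=\max_y(\e h_0(\e^{-2}y)+\cL(y,0;x,1))$. For the lower bound at the origin, $\tilde h_\e(0)\ge \e h_0(0)+\cL(0,0;0,1)$. For the upper bound, combine the shape bound \eqref{eq: airy shape bnds} with $\e h_0(\e^{-2}y)\le c\e-\e^{-1}|y|$. Exactly as in the proof of Theorem \ref{thm: geo start zero}, this gives, uniformly in $\e\le 1/100$,
\[
\tilde h_\e(x)\le \mathfrak C+c+c\log^{2/3}(2+|x|)-\min_{y}\bigl((x-y)^2+(\e^{-1}-c)|y|\bigr)\le \mathfrak C'-x^2/2 .
\]
Here the minimum is at least $x^2/2$ once $\e^{-1}-c\ge 2|x|$, and otherwise at least $(\e^{-1}-c)|x|/2\gtrsim x^2$ only for $|x|\lesssim \e^{-1}$; larger $|x|$ I handle by the crude bound $\min_y\ge \min(x^2/4,(\e^{-1}-c)|x|/2)$. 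So $F_\e(z)\le \mathfrak C''-\tfrac14\min(z^2,\e^{-1}|z|)$, with a random constant $\mathfrak C''$ whose law does not depend on $\e$.

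\textbf{Bound on $G_\e$.} This is the step I expect to be the main obstacle, since Theorem \ref{thm: mut abs cont airy bm} only controls compacts and cannot give growth bounds on windows of size $\e^{-2}$. By scaling and time stationarity, $G_\e(z)\overset d=\delta^{-1}(\A(\delta^2z)-\A(0))$ with $\delta\asymp\e$ and $\A=\cL(\cdot,0;0,1)$ a parabolic Airy$_2$ process. On $|\delta^2 z|\le 1$ I would use the uniform modulus of continuity of the stationary Airy process from \cite{DOV}: $|\A(w)-\A(w')|\le \mathfrak D\sqrt{|w-w'|}\log^{1/2}(2/|w-w'|)$ for $w,w'\in[-1,1]$, with $\mathfrak D$ having stretched-exponential tails. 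This gives $G_\e(z)\le \mathfrak D\sqrt{|z|}\log^{1/2}(2+|z|)$ there, using $\delta^{-1}\sqrt{\delta^2|z|}=\sqrt{|z|}$ and $\log(2/(\delta^2|z|))\lesssim\log(2+|z|)$ after absorbing the $\log\delta^{-1}$ loss into the parabolic term. On $|\delta^2z|\ge1$ the shape bound gives $G_\e(z)\le\delta^{-1}(\mathfrak C+c\log^{2/3}(2+\delta^2|z|)-\delta^4z^2)+\delta^{-1}|\A(0)|$, which is very negative.

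\textbf{Conclusion.} Adding the two bounds, with the regime $|z|\sim\e^{-1}$ to $\e^{-2}$ covered by the linear decay $\e^{-1}|z|\gg\sqrt{|z|}\log$, gives an $\e$-independent envelope of the form $\Psi(z)=\mathfrak C''+\mathfrak D\sqrt{|z|}\log(2+|z|)-c\min(z^2,|z|)$. Its tail probability beyond $M$ tends to $0$, which yields tightness of $\e^{-2}\gamma^\e(\e^3)$.

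For the point at time $2\e^3$, set $\e'=2^{1/3}\e$ and $h_0'(y)=2^{-2/3}h_0(2^{4/3}y)$. Then $\e^2h_0(\e^{-4}\cdot)=\e'^2h_0'(\e'^{-4}\cdot)$, and $h_0'$ still satisfies the hypotheses of Theorem \ref{thm: geo start zero}. Hence $\e^{-2}\gamma^\e(2\e^3)=2^{2/3}\e'^{-2}\gamma^{\e'}(\e'^3)$ is tight by the first part, applied over the slightly enlarged range of $\e'$; the argument above works for any bounded range of $\e$.
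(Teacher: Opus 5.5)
Your reduction of $\e^{-2}\gamma^\e(\e^3)$ to the argmax of $F_\e+G_\e$ with independent summands is sound, and so is the uniform envelope $F_\e(z)\le\mathfrak{C}''-c\min(z^2,\e^{-1}|z|)$. The region $|z|\ge\delta^{-2}$ is also fine once you pair the shape bound for $G_\e$ with the slope-$\e^{-1}$ decay of $F_\e$; the shape bound alone does not make $G_\e$ negative there. The substitution $\e'=2^{1/3}\e$, $h_0'(y)=2^{-2/3}h_0(2^{4/3}y)$ for the time-$2\e^3$ point is correct and is a neat alternative to the paper's joint two-point argmax.

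The gap is the bound on $G_\e$ for $|z|\le\delta^{-2}$. The all-pairs modulus you invoke yields only
\[
G_\e(z)\le\mathfrak{D}\sqrt{|z|}\,\log^{1/2}\bigl(2/(\delta^2|z|)\bigr).
\]
At $|z|\asymp M$ with $M$ fixed, the logarithm is about $2\log(1/\delta)\to\infty$. So $\log(2/(\delta^2|z|))\lesssim\log(2+|z|)$ fails there, and nothing absorbs the loss. At such $z$ the only negative contributions are $-\tfrac14 z^2$ from $F_\e$, which does not grow as $\delta\to0$, and $-\delta^3z^2$ from the parabola of $\A$, which vanishes. Your envelope therefore only localises $Z_\e$ to $|z|\lesssim(\mathfrak{D}^2\log(1/\e))^{1/3}$, which is not tightness. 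The problem is intrinsic to a L\'evy-type modulus: it is uniform over all pairs of points, so it carries a logarithm and is not invariant under Brownian scaling.

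What you need is scale-invariant control of increments from the fixed point $0$. Contrary to your remark, Theorem \ref{thm: mut abs cont airy bm} gives exactly that, and it is what the paper uses for $g_\e$. Apply it with $a=-1$, $b=1$, recentre at $0$, and use the invariance of Wiener measure under $B\mapsto\delta^{-1}B(\delta^2\cdot)$. Then the law of $G_\e$ on $[-\delta^{-2},\delta^{-2}]$ has a Radon--Nikodym derivative against rate two Brownian motion bounded by a constant $C$ independent of $\delta$. Since $\mathfrak{C}''$ is independent of $G_\e$, you get
\[
\PP\Bigl(\sup_{M\le|z|\le\delta^{-2}}(F_\e+G_\e)(z)\ge0\Bigr)\le C\,\PP\Bigl(\sup_{|z|\ge M}\bigl(W(z)-c\min(z^2,|z|)\bigr)\ge-\mathfrak{C}''\Bigr),
\]
where $W$ is a rate two Brownian motion independent of $\mathfrak{C}''$. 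The right-hand side does not depend on $\e$ and tends to $0$ as $M\to\infty$. Equivalently, the same density bound gives Gaussian tails for $u^{-1/2}\sup_{|w|\le u}|\A(w)-\A(0)|$ uniformly in $u\in(0,1]$, and a union bound over dyadic shells $2^k\le|z|\le 2^{k+1}$ then finishes. With this replacement your proof goes through.
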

\begin{proof}
    Observe that for all $\epsilon \in (0,1/2^{1/3})$, the shape bounds of the initial condition $h_0$ implies that $\gamma^\e$ exists and is unique, Lemma \ref{lemma: geod bdry cond}. Moreover, by metric composition and independent increments of the directed landscape, we have the distributional equality
\[
(\e^{-2}\gamma^\e(\e^3), \e^{-2}\gamma^\e(2\e^3)) \stackrel{d}{=} (X_\e, Y_\e):= \argmax_{x,y\in \R}(f_{\epsilon}(x)+g_{\epsilon}(y)+\cL_{\epsilon}(x,0;y,1))
\]
with $f_\e, g_\e, \cL_\e$ as in the statement of Theorem \ref{thm: geo start zero}.

To establish the tightness of $(X_\e, Y_\e)$, by Prokhorov's theorem it suffices to show that for any $\delta> 0$, there exists some $K>0$ such that
\begin{align}\label{eq: tight env}
    \sup_{0< \e < 1/100}\PP(|X_\e|\lor |Y_\e|\ge K)\le \delta\,.
\end{align}

Recall from Theorem \ref{thm: geo start zero} the distributional equality,
\[
g_{\e}(x) \overset d= \delta^{-1}(\cL(\delta^2 x,0;0,1)-\cL(0,0;0,1))\,,\qquad x\in \R
\]
where $\delta = \e(1-2\e^{3})^{-1/3}$, which tends to zero as $\varepsilon \to 0$. By \cite[Theorem 1.1]{dauvergne2024wienerdensitiesairyline}, we have for any $K> 0$ that on $[-\delta^{-2}K, \delta^{-2}K]$, the process $g_\e(\cdot)$ has a Radon-Nikodym derivative with respect to the rate two Wiener measure on that interval that is almost surely uniformly bounded in $\delta$ (though possibly depending on $K$). This means that on the event $Y_\e\in [-K\delta^{-2}, K\delta^{-2}]$, we have the absolute continuity (denoted by the relation $\cdot \ll \cdot$) of environments
\[
(f_\e, g_\e, \cL_\e(\cdot, 0; \cdot, 1)) \ll (f_\e, W, \cL_\e(\cdot, 0; \cdot, 1))
\]
where $W$ is a rate two Brownian motion starting from the origin independent from $(f_\e, \cL_\e)$, where the Radon-Nikodym derivative is uniformly bounded in $\e$ (again, possibly depending on $K$). We now reduce the proof of tightness, \eqref{eq: tight env} to the proof of tightness of the almost surely unique (by Lemma \ref{lemma: geod bdry cond}) maximisers $(X'_\e, Y'_\e)$ of the environments $(f_\e, W, \cL_\e(\cdot, 0; \cdot, 1))$ as $\e \to 0$.

Indeed, fix $\eta> 0$. By the shape bounds of $h_0$ and those of the directed landscape \eqref{eq: airy shape bnds}, $\gamma^\e(0)$ is tight in $\e<1/100$. For any $M>0$, on the event $|\gamma^\varepsilon(0)|<M$, by geodesic monotonicity, $\gamma^\varepsilon$ lies between the geodesics from $(\pm M,0)$ to $(0,1)$. Consequently, by the preceding discussion and Proposition 12.3 \cite{DOV} $\sup_{s\in[0,1]}|\gamma^\varepsilon(s)|$ is tight in $0< \e< 1/100$.  We can now choose $K_\eta> 0$ such that for all $0< \varepsilon < 1/100$, $Y_\e\in [-K_\eta\delta^{-2}, K_\eta\delta^{-2}]$ with probability at least $1-\eta/2$. Let $1 \le C_\eta < \infty$ denote the essential supremum of the Radon-Nikodym derivative of the $\cL(\cdot, 0; 0, 1) -\cL(0, 0 ; 0, 1)$ against rate two Brownian motion on $[-K_\eta, K_\eta]$ (guaranteed to exist by \cite[Theorem 1.1]{dauvergne2024wienerdensitiesairyline}). By the assumed tightness of $(X'_\e, Y'_\e)$, we have there exists $K'_\eta> K_\eta$ such that 
\[
\sup_{0<\e<1/100}\PP(|X'_\e|\lor |Y'_\e|\ge K'_\eta)\le \eta/2C_\eta\,.
\]
We now estimate
\begin{align*}
    &\limsup_{\substack{0<\e<1/100\\\e\downarrow 0}}\PP(|Y_\e|\le  K_\eta\delta^{-2}\,, |X_\e|\lor |Y_\e|\ge K'_\eta)\\
    &\le\limsup_{\substack{0<\e<1/100\\\e\downarrow 0}}\PP\left(\max_{\substack{|x|\lor|y|\le K'_\eta\,,\\|y|\le K_\eta\delta^{-2}}}f_\e(x)+g_\e(y)+\cL(x, 0; y, 1)\le \max_{\substack{|x|\lor|y|\ge K'_\eta\,,\\|y|\le K_\eta\delta^{-2}}}f_\e(x)+g_\e(y)+\cL(x, 0; y, 1)\right)\\
    &\le C_{\eta} \cdot \limsup_{\substack{0<\e<1/100\\\e\downarrow 0}}\PP\left(\max_{\substack{|x|\lor|y|\le K'_\eta\,,\\|y|\le K_\eta\delta^{-2}}}f_\e(x)+W(y)+\cL(x, 0; y, 1)\le \max_{\substack{|x|\lor|y|\ge K'_\eta\,,\\|y|\le K_\eta\delta^{-2}}}f_\e(x)+W(y)+\cL(x, 0; y, 1)\right)\\
    &\le\limsup_{\substack{0<\e<1/100\\\e\downarrow 0}}C_{\eta}\cdot \PP(|Y'_\e|\le K_\eta\delta^{-2}\,, |X'_\e|\lor |Y'_\e|\ge K'_\eta)+ C_{\eta}\cdot \PP(|Y'_\e|\ge K_\eta\delta^{-2})\le \eta\,,
\end{align*}
since the tightness of $Y'_\e$ gives the $\lim_{0<\e<1/100} \PP(|Y'_\e|\ge K_\eta\delta^{-2}) = 0$. This means there exists some $0<\e_0< 1/100$ such that 
\[
 \sup_{0< \e < \e_0}\PP(|X_\e|\lor |Y_\e|\ge K)\le 2\eta\,.
\]
Now, since for $\e_0< \e < 1/100$, one can prove tightness of the maximisers of the environments $(f_\e, g_\e, \cL_\e)$ directly using the global shape estimates \eqref{eq: airy shape bnds} (essentially proceeding as in the argument below), we conclude after possibly enlarging $K$ that
\[
\sup_{\e_0< \e < 1/100}\PP(|X_\e|\lor |Y_\e|\ge K)\le 2\eta\,,
\]
Combining the above estimates gives \eqref{eq: tight env}.

We now prove the tightness of $(X'_\e, Y'_\e)$ using the global shape estimates for the directed landscape, the initial data $h_0$ and Brownian motion. Absolute constants $a, c, d, C$ as well as random constants $\mathfrak{C}_{i, \e}$ may be enlarged or reduced by absolute factors as needed throughout the remainder of the argument. 

Fix $\delta > 0$ and $0< \varepsilon < 1/2^{1/3}$. Since $X'_\e, Y'_\e$ are maximisers of the environment $(f_\e, W, \cL_\e(\cdot, 0; \cdot, 1))$, we have the shape estimates for the directed landscape (and recalling that~$f_\e(0) = W(0) =~0$,~$\cL_\e \overset d=~\cL$),
\begin{align}\label{eq: max tight 1}
    -\mathfrak{C}_{1, \e} \le \cL_\e(0, 0; 0, 1) &\le f_{\epsilon}(X'_\e)+W(Y'_\e)+\cL_{\epsilon}(X'_\e,0;Y'_\e,1)\nonumber\\
    &\le \mathfrak{C}_{1, \e} + f_{\epsilon}(X'_\e)+W(Y'_\e) + c \log^{2/3}(2+|X'_\e|+|Y'_\e|)-(X'_\e-Y'_\e)^2\,,
\end{align}
where
$\mathfrak{C}_{1, \e}$ is a random constant with $\sup_{0< \e < 1/2^{1/3}}\E a^{\mathfrak{C}^{3/2}_{1, \e}} < \infty$ for some universal constants $a > 1$, $c>0$. 

We now estimate from above using the elementary inequalities for all $x, y \in \R$ and some absolute constant $c> 0$
\[
|y|+(x -y)^2 \ge c(|x|+|y|-1)\,, \quad \log(2+|x|+|y|)\le \log2 + \log(1+|x|)+\log(1+|y|)\,,
\]
\begin{equation}\label{eq: max tight 2}
\begin{array}{ll}
     f_\e(X'_\e) &\displaystyle \le \mathfrak{C}_{2, \e} + \max_{y\in \R}(c-|y|-(y-X'_\e)^2 + c \log^{2/3}(2+|X'_\e|+|y|))\\[1ex]
     &\displaystyle \le \mathfrak{C}_{2, \e} + \max_{y\in \R}(-c(|X'_\e|+|y|) + c\log2 + c\log(1+|X'_\e|)+c\log(1+|y|))\\[1ex]
     &\displaystyle \le \mathfrak{C}_{2, \e}+ c\log(1+|X'_\e|)-c|X'_\e|+ c\max_{y\in \R}(\log(1+|y|)-|y|)\\[1ex]
     &\displaystyle \le \mathfrak{C}_{2, \e} + c\log(1+|X'_\e|)-c|X'_\e|\,,
\end{array}
\end{equation}
for some $\mathfrak{C}_{2, \e}>0$ such that $\sup_{0<\e<1/2^{1/3}}\mathbb{E}[a^{\mathfrak{C}^{3/2}_{2, \e}}]< \infty$ and some universal $a> 1$, $c> 0$. 

Combining \eqref{eq: max tight 1} and \eqref{eq: max tight 2}, we obtain
\begin{align*}
    -\mathfrak{C}_{1, \e} &\le \mathfrak{C}_{1, \e} + \mathfrak{C}_{2, \e} +W(Y'_\e)+ c \log(1+|Y'_\e|)+c\log(1+|X'_\e|)-c|X'_\e|-(X'_\e-Y'_\e)^2\\
    &\le \mathfrak{C}_{1, \e} + \mathfrak{C}_{2, \e} + W(Y'_\e) -c(|X'_\e|+|Y'_\e|)\,.
\end{align*}
We thus have for any $K> 1$ the inclusion
\[
\{K\le |X'_\e|\lor |Y'_\e|\le 2K\}\subseteq \left\{2\mathfrak{C}_{1, \e}+\mathfrak{C}_{2, \e}+\sup_{|v| \le 2K}W(v)\ge cK\right\}\,,
\]
where $c> 0$ is some universal constant. Observe the tails of $\mathfrak{C}_{1, \e}, \mathfrak{C}_{2, \e}, \sup_{|v|\le 2r}W(v)$ satisfy
\[
\sup_{0< \e < 1/100}\PP(\mathfrak{C}_{1, \e}>r) + \PP(\mathfrak{C}_{2, \e}>r) +  \PP\left(\sup_{|v|\le 2r}W(v)\ge ar\right) \le C\mathrm{e}^{-dr}\,, \qquad r> 0\,,
\]
for universal $a, d, C> 0$. Summing over dyadic annuli gives
\[
\sup_{0<\e<1/100}\PP(|X'_\e|\lor |Y'_\e|\ge K)\le C\mathrm{e}^{-dK}\,,
\]
obtaining \eqref{eq: tight env} for $K$ sufficiently large depending on $\delta> 0$, thus concluding the proof of tightness.
\end{proof}

Finally, piecing Theorem \ref{thm: geo start zero} and Lemma \ref{lemma: geod endpts tight}, we identify the distributional limit of rescaled geodesic increments near its starting point.
\begin{theorem}\label{thm: geod conv env} Let $h_0$ be as in the setup of Lemma \ref{lemma: geod endpts tight}. Let $\gamma^{\e}$ be the geodesic from $\e^2 h_0(\e^{-4}\cdot)$ to $(0,1)$. Then,
\[\e^{-2}(\gamma^{\e}(2\e^3)-\gamma^{\e}(\e^3))\eqd Y-X\]
as $\e \to 0$, where 
\[(X,Y):=\argmax_{x,y}(\mathcal{A}_1(x)-\mathcal{A}_1(0)+B(y)+\cL(x,0;y,1))\,,\]
where $\mathcal{A}_1$ is the parabolically shifted Airy$_2$ process, $B$ is a rate two Brownian motion, $\cL$ is a directed landscape from time $0$ to $1$, and all three objects are independent. We will call the tuple $(\mathcal{A}_1-\mathcal{A}_1(0), B, \cL(\cdot, 0; \cdot, 1)$ the \textbf{Brown-Airy environment}. In fact, even more is true, as we have joint convergence, 
\[(\e^{-2}\gamma^\e(\e^3), \e^{-2}\gamma^\e(2\e^3))\eqd (X,Y)\,,\qquad \e\to 0\,.\]
\end{theorem}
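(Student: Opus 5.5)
Since the first coordinate determines the difference, the plan is to prove the joint statement
\[
(\e^{-2}\gamma^\e(\e^3), \e^{-2}\gamma^\e(2\e^3))\eqd (X,Y)
\]
and then apply the continuous mapping $(x,y)\mapsto y-x$. As in the proof of Lemma \ref{lemma: geod endpts tight}, metric composition and independent increments give the distributional identity
\[
(\e^{-2}\gamma^\e(\e^3), \e^{-2}\gamma^\e(2\e^3)) \overset d= (X_\e,Y_\e)=\argmax_{x,y}\bigl(f_\e(x)+g_\e(y)+\cL_\e(x,0;y,1)\bigr),
\]
with $(f_\e,g_\e,\cL_\e)$ as in Theorem \ref{thm: geo start zero}. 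The task is therefore to show that the argmax of the environment is continuous along the convergent sequence of environments.

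By Theorem \ref{thm: geo start zero} and Skorokhod's representation theorem, we work on a probability space where $(f_\e,g_\e,\cL_\e(\cdot,0;\cdot,1))\to(\mathcal{A}_1-\mathcal{A}_1(0),B,\cL(\cdot,0;\cdot,1))$ almost surely, locally uniformly on $\R$, $\R$ and $\R^2$ respectively. Define
\[
F_\e(x,y)=f_\e(x)+g_\e(y)+\cL_\e(x,0;y,1),
\]
and let $F$ be the corresponding limit. Then $F_\e\to F$ uniformly on every compact subset of $\R^2$. Lemma \ref{lemma: geod endpts tight} gives tightness of $(X_\e,Y_\e)$, since the distributional identity transfers tightness to the coupled maximisers. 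Thus for each $\eta>0$ there is $K$ with $\PP(|X_\e|\lor|Y_\e|>K)<\eta$ for all small $\e$.

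We also need to know that the limit maximiser exists, is unique, and stays bounded. The shape bounds \eqref{eq: airy shape bnds} for $\cL$ and for $\mathcal{A}_1$ (parabolic decay) show that $F\to-\infty$ at infinity. The only subtlety is the $y$ direction, where $B(y)$ competes with $-(x-y)^2$; the quadratic term dominates $B$, which grows like $\sqrt{|y|\log\log|y|}$. Hence the maximiser set of $F$ is almost surely bounded. Uniqueness follows from Lemma \ref{lemma: geod bdry cond}, applied with boundary data $h=\mathcal{A}_1-\mathcal{A}_1(0)$ and $g=B$, which are independent of $\cL$. Its proof uses exactly the fact that an independent-of-landscape boundary condition yields an almost surely unique maximiser. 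This is the step I would verify most carefully: the lemma is stated with deterministic $h,g$, so we apply it conditionally on $(\mathcal{A}_1,B)$, which is legitimate by independence.

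Finally comes the standard argmax continuity argument. Fix $\eta>0$ and an event of probability at least $1-2\eta$ on which $(X_\e,Y_\e)$ lies in $[-K,K]^2$ for a subsequence, and on which the unique maximiser $(X,Y)$ lies in the interior of the box. Any subsequential limit $(x^*,y^*)$ of $(X_\e,Y_\e)$ inside the box satisfies
\[
F(x^*,y^*)=\lim F_\e(X_\e,Y_\e)\ge\lim F_\e(X,Y)=F(X,Y),
\]
by uniform convergence on $[-K,K]^2$, so $(x^*,y^*)=(X,Y)$ by uniqueness. Strictly speaking, tightness only controls each marginal probability, not the almost sure event along a subsequence. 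I would instead argue in probability: if $|(X_\e,Y_\e)-(X,Y)|>\rho$ with both points in the box, then
\[
\sup_{[-K,K]^2\setminus B_\rho(X,Y)}F_\e\ge \max_{[-K,K]^2} F_\e.
\]
Passing to the limit using uniform convergence, the corresponding event for $F$ has probability zero by uniqueness and compactness. This gives $(X_\e,Y_\e)\to(X,Y)$ in probability on the coupled space, hence in distribution. The difference statement then follows by the continuous mapping theorem.
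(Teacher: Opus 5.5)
Your proposal is correct and follows essentially the same route as the paper: you identify the rescaled geodesic points with $\argmax(f_\e+g_\e+\cL_\e)$, take a Skorokhod coupling from Theorem~\ref{thm: geo start zero}, get tightness from Lemma~\ref{lemma: geod endpts tight} and uniqueness of the limiting maximiser from Lemma~\ref{lemma: geod bdry cond}, and combine these in an argmax-continuity argument. The differences are minor: you prove convergence in probability of the maximisers directly on the environment coupling, whereas the paper applies Skorokhod a second time to include the tight maximisers and identifies every subsequential limit as the unique argmax. Also, coercivity of the limiting field comes from the combined term $-x^2-(x-y)^2\le -c(x^2+y^2)$ given by both shape bounds, rather than from $-(x-y)^2$ alone beating $B(y)$.
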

\begin{proof}
As in the proof of Lemma \ref{lemma: geod endpts tight}, we have for all $\epsilon \in (0,1/2^{1/3})$, 
\[
(\e^{-2}\gamma^\e(\e^3), \e^{-2}\gamma^\e(2\e^3)) \stackrel{d}{=} (X_\e, Y_\e):= \argmax_{x,y\in \R}(f_{\epsilon}(x)+g_{\epsilon}(y)+\cL_{\epsilon}(x,0;y,1))
\]
with $f_\epsilon, g_{\epsilon}, \cL_{\epsilon}$ as in Theorem \ref{thm: geo start zero}. By Skorokhod's representation theorem and Theorem \ref{thm: geo start zero}, we can construct a coupling on some underlying probability space such that (passing to a subsequence) we have almost surely,
\[
(f_{\e_n}, g_{\e_n}, \cL_{\e_n})\longrightarrow (\mathcal{A}_1 -\mathcal{A}_1(0), B, \cL)\,,\qquad n\to \infty
\]
in the topology of local uniform convergence, where $\mathcal{A}_1$ is the parabolically shifted Airy$_2$ process, $B$ is a rate two Brownian motion, $\cL$ is a directed landscape from time $0$ to $1$, and all three objects are independent. Moreover, by Lemma \ref{lemma: geod endpts tight}, $(X_\e, Y_\e)$ are tight and so another application of Skorokhod's representation theorem gives (again up to a further subsequence),
\begin{equation}\label{eq: skorokhod geod endpts}
    (X_{\e_n}, Y_{\e_n}, f_{\e_n}, g_{\e_n}, \cL_{\e_n})\longrightarrow (X, Y, \mathcal{A}_1 -\mathcal{A}_1(0), B, \cL)\,,\qquad n\to \infty
\end{equation}
in the product topology, almost surely. Now, for all $n$ we have
\[
f_{\e_n}(X_{\e_n})+g_{\e_n}(Y_{\e_n})+\cL_{\e_n}(X_{\e_n},0;Y_{\e_n},1) = \max_{x, y \in \R}f_{\e_n}(x)+g_{\e_n}(y)+\cL_{\e_n}(x,0;y,1)\,.
\]
By \eqref{eq: skorokhod geod endpts}, we have the left hand side converges to
$\mathcal{A}_1(X)-\mathcal{A}_1(0)+B(Y)+\cL(X,0;Y,1)$. Now the almost sure convergence of the unique maximisers $(X_{\e_n}, Y_{\e_n})$ as $n\to \infty$ and the attainment of the maximum (which is also unique, by Lemma \ref{lemma: geod bdry cond}) in the environment $(\mathcal{A}_1 -\mathcal{A}_1(0), B, \cL)$ gives some random $K$ such that almost surely for all $\e_n$,
\[
\max_{x, y \in \R}f_{\e_n}(x)+g_{\e_n}(y)+\cL_{\e_n}(x,0;y,1) = \max_{|x|\lor |y| \le K}f_{\e_n}(x)+g_{\e_n}(y)+\cL_{\e_n}(x,0;y,1)
\]
and
\[
\max_{x, y \in \R}\mathcal{A}_1(x) -\mathcal{A}_1(0)+ B(y) + \cL(x, 0; y, 1) = \max_{|x|\lor |y| \le K}\mathcal{A}_1(x) -\mathcal{A}_1(0)+ B(y) + \cL(x, 0; y, 1)\,.
\]
Now, the local uniform convergence of the environments in \eqref{eq: skorokhod geod endpts} and the above gives 
\[
(X, Y) = \operatorname*{argmax}_{x, y\in \R} \mathcal{A}_1(x)-\mathcal{A}_1(0)+B(y)+\cL(x,0;y,1)\,,
\]
concluding the proof.
\end{proof}

Having established and identified the scaling limit of geodesic increments near its starting point to the Brown-Airy environment in Theorem \ref{thm: geod conv env}, we obtain using mixing properties of the directed landscape a certain `law of large numbers' for the rescaled geodesic increments near its starting point.
\begin{theorem}\label{thm: cesaro brown-airy} Let $\Pi: [0, 1]\to \R$ be the almost surely unique directed geodesic from $(0, 0)$ to $(0, 1)$. Let $\phi:\R\mapsto \R$ be any bounded continuous function. Then, with \begin{equation}\label{eq: at0}
F(n):=n^{-1}\sum_{i=1}^n\phi(2^{2i}(\Pi(2^{-3i+1})-\Pi(2^{-3i})))\,,
\end{equation}
we have almost surely, 
$$F(n)\to \mathbb{E}\phi(Y-X)$$
with
\[
(X, Y) = \operatorname*{argmax}_{x, y\in \R} \mathcal{A}_1(x)-\mathcal{A}_1(0)+B(y)+\cL(x,0;y,1)\,,
\]
where $\mathcal{A}_1$ is the parabolically shifted Airy$_2$ process, $B$ is a rate two Brownian motion, $\cL$ is a directed landscape from time $0$ to $1$, and all three objects are independent.
\end{theorem}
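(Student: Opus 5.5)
Write $\e_i := 2^{-i}$ and $Z_i := 2^{2i}(\Pi(2\e_i^3)-\Pi(\e_i^3))$, so that $F(n)=n^{-1}\sum_{i\le n}\phi(Z_i)$. By Theorem \ref{thm: geod conv env} with the narrow wedge $h_0$, which is invariant under the rescaling $\e^2h_0(\e^{-4}\cdot)$, we have $Z_i \eqd Y-X$. Hence $\E\phi(Z_i)\to \E\phi(Y-X)$, and therefore $\E F(n)\to\E\phi(Y-X)$. It remains to show that $F(n)-\E F(n)\to 0$ almost surely. I would establish a variance bound of the form
\[
\mathrm{Var}(F(n))\le C n^{-\alpha}
\]
for some $\alpha>0$. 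Chebyshev's inequality together with Borel--Cantelli along the subsequence $n_k=\lfloor k^{2/\alpha}\rfloor$ then gives almost sure convergence along $n_k$. Since $\phi$ is bounded, $|F(m)-F(n_k)|\le 2\|\phi\|_\infty (n_{k+1}-n_k)/n_k\to 0$ for $n_k\le m\le n_{k+1}$, which interpolates to all $n$.

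The variance bound reduces to a decorrelation estimate $|\mathrm{Cov}(\phi(Z_i),\phi(Z_j))|\le C\rho^{|i-j|}$, or any bound summable in an averaged sense. To get it, I would approximate each $Z_i$ by a variable $\tilde Z_i$ that depends only on the landscape in the time window $[\e_i^3 \e_i^{-\beta}\cdot\e_i^{\beta}/K,\ K\e_i^{3}]$, roughly a band of times comparable to $\e_i^3$ up to a large factor $K_{|i-j|}$. Windows of different scales are disjoint once their ratio is large, so independent increments (Definition \ref{def: dir land}) make $\tilde Z_i$ and $\tilde Z_j$ independent when the windows are disjoint.

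Concretely, $Z_i$ is determined by the argmax in Lemma \ref{lemma: geod endpts tight}: $f_\e$ comes from $\cL(0,0;\cdot,\e^3)$, and $g_\e$ comes from $\cL(\cdot,2\e^3;0,1)$. The dependence on the far past is already local, since $\cL(0,0;\cdot,\e^3)$ lives in $[0,\e^3]$, and this piece overlaps with the future piece of the smaller-scale index. So I would couple as follows. I would replace $q_\e$ by $\tilde q_\e(x)=\cL(x,2\e^3;0,M\e^3)$ with $M=2^{3m}$. The claim to prove is that the rescaled increments of $q_\e$ and $\tilde q_\e$ near $0$ agree on $[-K,K]$ up to a small error with high probability, or that the argmax is unchanged. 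This follows because both are locally Brownian with increments determined by geodesic coalescence. Concretely, the geodesics from $(x,2\e^3)$ to $(0,1)$ for $|x|\le K\e^2$ coalesce before time $M\e^3$ with probability $1-o_M(1)$; this uses \cite{DOV} coalescence and the scaling in Lemma \ref{lemma: invariance dir land}. If they do, then $q_\e(x)-q_\e(0)=\cL(x,2\e^3;z,M\e^3)-\cL(0,2\e^3;z,M\e^3)$ at the coalescence location $z$. I would then replace $z$ by a fixed reference point with a Busemann-type approximation. Combined with the tightness from Lemma \ref{lemma: geod endpts tight}, this shows $\PP(Z_i\ne\tilde Z_i)\to0$ as $M\to\infty$ uniformly in $i$. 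A symmetric truncation handles the $i$ of the smaller scale: $\tilde Z_j$ uses only $\cL(0,0;\cdot,\cdot)$ on $[0,M'\e_j^3]$, which is automatic. So for $j\ge i+m$ with $m$ large, $\tilde Z_i$ depends only on times $\ge 2\e_i^3$, while $\tilde Z_j$ depends only on times $\le 2^{-3m}\cdot 2\e_i^3\cdot\ldots\le 2\e_i^3$. These are independent.

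This yields $|\mathrm{Cov}|\le 4\|\phi\|_\infty \sup_i\PP(Z_i\neq\tilde Z_i)=:\theta(m)\to0$ for $|i-j|\ge m$. It follows that $\mathrm{Var}F(n)\le C(m/n+\theta(m))$, which gives convergence in probability, but almost sure convergence needs a rate. The main obstacle is therefore a quantitative coalescence/localization estimate $\theta(m)\le C2^{-cm}$, or even $\theta(m)\le C m^{-c}$. With that, choosing $m\approx n^{1/2}$ gives a polynomial variance decay and the Borel--Cantelli argument above applies. My first attempt would use the known polynomial tail bounds on coalescence times of geodesics from nearby starting points in \cite{DOV} (Proposition 12.3-type estimates), together with the exponential tails in the tightness proof of Lemma \ref{lemma: geod endpts tight}.
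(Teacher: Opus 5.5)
There is a genuine gap, and it appears before the rate issue you flag. Your first step, $\E F(n)\to\E\phi(Y-X)$ via Theorem~\ref{thm: geod conv env}, is fine and matches the paper. The problem is the coupling. In your concrete construction you truncate only the \emph{future} profile $q_\e$. So $\tilde Z_i$ still depends on $f_{\e_i}$, that is on $\cL(0,0;\cdot,\e_i^3)$, and hence on the landscape over all of $[0,\e_i^3]$. For $j\ge i+m$ this interval contains the whole window $[0,M\e_j^3]$ on which $\tilde Z_j$ depends. The claim that $\tilde Z_i$ depends only on times $\ge 2\e_i^3$ is therefore false, and $\tilde Z_i,\tilde Z_j$ are not independent. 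Calling the past dependence ``already local'' conflates lying in a window of length $\e_i^3$ with being separated from time $0$: every coarse index sees all finer scales through its initial profile. Your initial two-sided band $[\e_i^3/K,K\e_i^3]$ had the right shape, but realising it needs a second, separate approximation. You would have to replace the increments of $f_{\e_i}$ on $[-K,K]$ by those of a profile built only from $\cL$ on $[\e_i^3/M,\e_i^3]$. That requires its own bottleneck event: the geodesics from $(0,0)$ to $\{(x,\e_i^3):|x|\le K\e_i^2\}$ must still share a point at time $\e_i^3/M$, and that random point must then be replaceable by a deterministic one. In effect this is a quantitative form of the convergence $f_\e\to\mathcal{A}_1-\mathcal{A}_1(0)$ in Theorem~\ref{thm: geo start zero}, which the paper proves without any rate.

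Second, even with a correct two-sided truncation, what you establish is $\mathrm{Var}\,F(n)\le C(m/n+\theta(m))$ with $\theta(m)\to0$. That gives only convergence in probability, so the almost sure statement rests entirely on a decay rate for $\theta(m)$ that you do not prove. The estimate \cite[Proposition 12.3]{DOV} you mention is used in this paper to control the transversal location of geodesics, not coalescence. The events you need are two-sided bottlenecks, whose quantitative probabilities would have to be proved or imported.

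The paper avoids rates altogether. It passes to the semi-infinite geodesic $\Pi_\infty=H(\cL)$ from $(0,0)$ in direction $0$, whose rescaled increments are \emph{exactly} $H(T^i\cL)(2)-H(T^i\cL)(1)$ for the law-preserving $1:2:3$ zoom $T$. Since $T$ is mixing by independence of disjoint time slices, Birkhoff's theorem gives almost sure convergence of the Ces\`aro means to a constant. This transfers to $\Pi$ because the law of $\Pi|_{[0,1/2]}$ is absolutely continuous with respect to that of $\Pi_\infty|_{[0,1/2]}$. That absolute continuity comes from identifying $\Pi_\infty|_{[0,1]}$ in law with the geodesic from $(0,0)$ to $(W,1)$ and running the tilting argument of Theorem~\ref{thm: cont}. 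The constant is then identified through $\E F(n)\to\E\phi(Y-X)$ and bounded convergence. The idea you are missing is this exact scale-stationarity, which turns your quantitative decorrelation problem into a qualitative ergodicity statement.
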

\begin{proof}
By Theorem \ref{thm: geod conv env} taking $h_0$ to be such that $h_0(0) = 0$ and $-\infty$ elsewhere, we have
\[
2^{2i}(\Pi(2^{-3i+1})-\Pi(2^{-3i}) \eqd Y-X\,,\qquad i\to \infty\,.
\]
Equivalently, for any $\phi$ bounded continuous we have the convergence of expectations
\[
\mathbb{E}\phi(2^{2i}(\Pi(2^{-3i+1})-\Pi(2^{-3i}))) \to \mathbb{E}\phi(Y-X)\,,\qquad i \to \infty \,.
\]
Thus, the expectation of the C\'{e}saro means $\mathbb{E}F(n)$ converge to $\mathbb{E}\phi(Y-X)$ as $n\to \infty$. 

Now, let $\Pi_\infty$ be the almost surely unique infinite geodesic in the directed landscape $\cL$ from $(0, 0)$ with direction $0$, \cite[Theorem 3.12, Theorem 3.14]{RahmanVirag2025}. This is a random path which is uniquely characterised by the following (Definition 3.14 in \cite{RahmanVirag2025}):
\begin{enumerate}
    \item $\Pi_\infty:[0, \infty) \to \R$ is continuous and starts at $(0, 0)$,
    \item satisfies $\lim_{t\to \infty} \Pi_\infty(t)/t = 0$
    \item for all $0\le s < t< \infty$, the restriction of $\Pi_\infty$ to $[s, t]$ is an $\cL$-geodesic, meaning
    \[
    \cL(\Pi_\infty(s),s;\Pi_\infty(t),t) = \cL(\Pi_\infty(s),s;\Pi_\infty(r),r)+ \cL(\Pi_\infty(r),r;\Pi_\infty(t),t)\,,\quad s < r < t\,.
    \]
\end{enumerate}
By \cite[Theorem 3.12]{RahmanVirag2025}, it can be constructed as a pointwise limit of the geodesics $\Pi_n$ in the directed landscape $\cL$ from $(0, 0)$ to $(0, n)$ as $n\to \infty$. We can thus represent $\Pi_\infty = H(\cL)$, where $H:\C(\R^4_\uparrow)\to \mathcal{C}[0, \infty)$ is a measurable functional. Thus, by the uniqueness of semi-infinite geodesics in the directed landscape, (see the paragraph preceding the statement of \cite[Theorem 1.3]{RahmanVirag2025}), we conclude that for any fixed $\e>0$ almost surely, $\e^{-2}\Pi_\infty(\e^3\cdot) = H(\cL^{(\e)})$, where $\cL^{(\e)}(x, s; y, t) = \e^{-1}\cL(\e^2x, \e^3 s; \e^2 y, \e^3 t)$ is the rescaled directed landscape. By $1:2:3$ scaling invariance of the directed landscape, we thus conclude that $\Pi_\infty(\cdot) \overset d= \e^{-2}\Pi_\infty(\e^3\cdot)$ on paths for all $\e> 0$.

Moreover, we have by \cite[Corollary 3.24]{RahmanVirag2025} and the symmetries of the directed landscape, Lemma \ref{lemma: invariance dir land} (see also the discussion at the bottom of \cite[p. 7]{RahmanVirag2025}), the distributional equality
\begin{equation}\label{eq: semi-inf law}
    \Pi_\infty|_{[0, 1]}(\cdot) \overset d = \Gamma\,, \quad  \Gamma \text{ is the geodesic from }(0, 0) \text{ to } (W, 1)\,,
\end{equation}
where $W$ is a two-sided rate two Brownian motion with $W(0) = 0$ independent from the directed landscape used to construct $\Gamma$.

Arguing in the same way as in Theorem \ref{thm: cont} and using the distributional equality \eqref{eq: semi-inf law}, we have that the restriction of the law on $\mathcal{C}[0, 1/2]$ of the restricted geodesic $\Pi|_{[0, 1/2]}$ is absolutely continuous with respect $\Pi_\infty|_{[0, 1/2]}$.

Thus, it suffices to show that almost surely, 
\[
F_\infty(n):=n^{-1}\sum_{i=1}^n\phi(2^{2i}(\Pi_\infty(2^{-3i+1})-\Pi_\infty(2^{-3i})))\to c\,,\qquad n\to \infty\,.
\]
for some constant $c$. Indeed, observe for all $i \ge 1$, the rescaled geodesic increments can almost surely be expressed as
\[ 2^{2i}\Pi_\infty(2^{-3i+1}) - 2^{2i}\Pi_\infty(2^{-3i}) = H(T^i(\cL))(2) - H(T^i(\cL))(1)\,,\qquad i \ge 1\,,\]
where
\[
T: \C(\R^4_\uparrow) \to \C(\R^4_\uparrow): f(x, s; y, t) \mapsto  2f(x/4, s/8; y/4, t/8)\,,
\]
and $T^i$ denotes the $i$-fold composition of $T$. The $1:2:3$ scaling invariance of the directed landscape also means the transformation $T$ leaves the law of $\cL$ invariant. Moreover, by the independence of disjoint temporal slices of the directed landscape, a standard measure theoretic argument gives the measure-preserving transformation $T$ is mixing, and so \emph{ergodic}. Thus, Birkhoff's theorem gives for any bounded measurable $\phi$,
\[
F_\infty(n)  = \frac{1}{n}\sum^n_{i=1}\phi(H(T^i(\cL))(2) - H(T^i(\cL))(1)) \to \E\phi\big(H(T^1(\cL))(2) - H(T^1(\cL))(1)\big)\,,\qquad n\to \infty
\]
almost surely, concluding the proof.
\end{proof}

Also, from \cite[Theorem 7.4]{sarkarthrehalves2022}, we have the existence of a scaling limit of geodesic increments at an interior point in terms of the \emph{Brown-Bessel environment}. Again, using mixing properties of the directed landscape we obtain certain averaging functionals of these rescaled interior geodesic increments (having the same form as in Theorem \ref{thm: cesaro brown-airy}) cannot converge along a fixed deterministic subsequence to any constant different from one depending on the Brown-bessel environment, which we will identify.

\begin{theorem}\label{thm: cesaro brown-bessel} Let $\Pi: [0, 1]\to \R$ be the almost surely unique directed geodesic from $(0, 0)$ to $(0, 1)$. Set 
\[(X_1,Y_1):=\argmax_{x,y\in \R}(-R(x)+B(x)+\cL(x,0;y,1)-R(y)-B(y))\,,\]
where $R$ a two-sided Bessel-3 process, $B$ a two-sided Brownian motion (both with diffusivity one), $\cL$ a directed landscape from time $0$ to $1$ and the three objects independent. We call the tuple $(-R+B, -R-B, \cL(\cdot, 0; , \cdot, 1))$ the \textbf{Brown-Bessel environment}. Moreover, let $\phi:\R\to \R$ be any bounded continuous function and fix $0<t<1$.

Fix any bounded continuous $\phi\in \C(\R)$ and $c\neq \E\phi(Y_1-X_1)$. Then, with $i_0(t) \ge 1$ such that $2^{1-3i_0(t)}+t < 1$ and
\begin{equation}\label{eq: cesaro 2}
G(n):= \frac{1}{n}\sum_{i=i_0(t)}^{n}\phi(2^{2i}(\Pi(2^{-3i+1}+t)-\Pi(2^{-3i}+t)))\,,
\end{equation}
for any fixed subsequence $(n_k)_k$ with $n_k\to \infty$ as $k\to \infty$, almost surely $G(n_k)$ does not converge to $c$ as $k\to \infty$.
\end{theorem}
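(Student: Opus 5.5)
Write $\Delta_i := 2^{2i}\bigl(\Pi(2^{-3i+1}+t)-\Pi(2^{-3i}+t)\bigr)$. The plan mirrors the proof of Theorem \ref{thm: cesaro brown-airy}: identify the limit law of $\Delta_i$ as $i\to\infty$ via \cite[Theorem 7.4]{sarkarthrehalves2022}, then combine an averaging argument with the scaling invariance of $\cL$ around the interior point $(\Pi(t),t)$. The point is that $\E\phi(Y_1-X_1)$ is the only possible constant limit along a deterministic subsequence.

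First, \cite[Theorem 7.4]{sarkarthrehalves2022} applied near the interior point gives the joint convergence
\[
\bigl(2^{2i}(\Pi(t+2^{-3i})-\Pi(t)),\,2^{2i}(\Pi(t+2^{-3i+1})-\Pi(t))\bigr)\eqd (X_1,Y_1),
\]
with $(X_1,Y_1)$ the argmax in the Brown-Bessel environment. Heuristically, the environment rescaled around $(\Pi(t),t)$ at time window $[t+\e^3,t+2\e^3]$ converges to $(-R+B,-R-B,\cL)$; one then argues as in Theorem \ref{thm: geod conv env}, using tightness of maximisers and Skorokhod coupling, that $\Delta_i\eqd Y_1-X_1$. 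Hence $\E\phi(\Delta_i)\to\E\phi(Y_1-X_1)$, and so $\E G(n)\to \E\phi(Y_1-X_1)$, since the first $i_0(t)$ terms contribute $O(1/n)$. Consequently, if $G(n_k)\to c$ almost surely, bounded convergence forces $c=\lim_k\E G(n_k)=\E\phi(Y_1-X_1)$, contradicting $c\ne\E\phi(Y_1-X_1)$. This establishes a stronger statement: there is no almost sure convergence to $c$.

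The statement requires $G(n_k)\not\to c$ almost surely, that is, $\PP(G(n_k)\to c)=0$, whereas the previous argument only excludes probability one. To upgrade, I would show a zero-one law: the event $E=\{G(n_k)\to c\}$ is trivial. First, $E$ depends only on the geodesic in arbitrarily small time windows $(t,t+\delta]$. Next, transfer to a stationary model as in Theorem \ref{thm: cesaro brown-airy}. Conditionally on the location $(\Pi(t),t)$, the geodesic after time $t$ is locally the geodesic from a point with Brownian-like boundary data. Using the absolute continuity tilting of Theorem \ref{thm: cont}, the law of $\Pi(t+\cdot)-\Pi(t)$ on $[0,\delta]$ is mutually absolutely continuous with respect to that of the geodesic starting at $(0,0)$ in $\cL$ and ending at the Brownian terminal data whose law is \eqref{eq: semi-inf law}, but with the Bessel-type two-sided constraint. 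A cleaner route is to compare directly with the bi-infinite geodesic through the origin, whose local picture is scale invariant under $T$. Under that reference measure the increments are $H'(T^i\cL)$ for a measurable $H'$ and the scaling map $T$. Since $T$ is mixing, $E$ is a $T$-invariant tail event up to null sets and has probability $0$ or $1$. Mutual absolute continuity transfers triviality of $E$ back to $\Pi$. Combined with the first step, which rules out probability one, this gives $\PP(E)=0$.

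The main obstacle is the second step: producing a scale-invariant reference geodesic through an interior point with laws mutually absolutely continuous to $\Pi$ near time $t$. The difficulty is that conditioning on passing through $(\Pi(t),t)$ is singular. If this fails, the fallback is a direct tail-triviality argument: $E$ is measurable with respect to $\bigcap_\delta \sigma(\cL|_{[t,t+\delta]}, \text{data outside})$, and one uses independence of the landscape on $[t,t+\delta]$ from the rest, plus the fact that the influence of the outside data on the fine-scale increments vanishes (local coalescence of geodesics), to deduce that $E$ is trivial.
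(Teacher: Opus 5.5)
Your first step is correct, apart from a centring slip. In the Brown--Bessel environment the spatial origin is the maximiser of the sum of the two boundary profiles, not a point of the geodesic, so $(X_1,Y_1)$ is not the limit of the increments recentred at $\Pi(t)$. Only $\Delta_i\eqd Y_1-X_1$ is needed, and that holds.

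The real problem is that the first step only excludes $\PP(E)=1$ for $E=\{G(n_k)\to c\}$, while the statement asserts $\PP(E)=0$. Your second step does not supply the zero--one law that would close this gap:
\begin{itemize}
\item The first route compares with a \emph{bi-infinite geodesic through the origin}, but no such object is available in $\cL$. A scale-invariant substitute would be a Palm-type conditioned landscape, which is exactly the singular conditioning you flag. You would then still have to prove mutual absolute continuity with $\Pi$ near $t$.
\item The fallback is empty as written. For every $\delta$, the landscape on $[t,t+\delta]$ together with the data outside generates all of $\sigma(\cL)$ by metric composition. So the intersection over $\delta$ is the full $\sigma$-algebra and carries no triviality.
\item The claim that \emph{the influence of the outside data on the fine-scale increments vanishes} is the real content of the theorem, and it is left unproved.
\end{itemize}

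The missing idea is to prove asymptotic independence of the single increment from the whole landscape, and then test it against $\mathbf 1_E$ itself; no zero--one law is needed. The paper shows that for every bounded $\sigma(\cL)$-measurable $H$,
\[
\lim_{i\to\infty}\E\bigl[H\,\phi(\Delta_i)\bigr]=\E H\cdot\E\phi(Y_1-X_1)\,.
\]
The argument runs as follows.
\begin{enumerate}
\item By martingale convergence one may take $H$ measurable with respect to the landscape outside $(t-\delta,t+\delta)$.
\item It then suffices that $\E[\phi(\Delta_i)\mid\mathcal F^{i,m,t}]\to\E\phi(Y_1-X_1)$ in $L^1$. Here $\mathcal F^{i,m,t}$ is generated by the landscape outside $[t+\e_i^3-m,\,t+2\e_i^3+m]$, with $\e_i=2^{-i}$ and $m=\delta/2$.
\item Conditionally on $\mathcal F^{i,m,t}$, $\Delta_i$ is an increment of a geodesic in an independent landscape between two boundary profiles. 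Their recentred laws have Radon--Nikodym derivatives against Brownian motion bounded uniformly in $i$ (Theorem \ref{thm: mut abs cont airy bm}).
\item After truncating to $[-b,b]$ using tightness of $\sup_{s}|\Pi(s)|$, \cite[Corollary 6.7]{sarkarthrehalves2022} gives the conditional convergence.
\item Averaging yields $\E[G(n)H]\to\E H\cdot\E\phi(Y_1-X_1)$. With $H=\mathbf 1_E$, bounded convergence on $E$ gives $\E[G(n_k)\mathbf 1_E]\to c\,\PP(E)$. Hence $(c-\E\phi(Y_1-X_1))\PP(E)=0$.
\end{enumerate}
Your first step is exactly the case $H\equiv 1$ of this argument. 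Moreover, any proof of your zero--one law would need at least this mixing property, since you must show $E$ is independent of every landscape event. So the detour is strictly harder than plugging in $H=\mathbf 1_E$ directly.
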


\begin{proof}
    Fix $\phi$ in $\mathcal{C}$ bounded. By the boundedness of $\phi$, we make the harmless identification $i_0(t) = 1$.
    To obtain \eqref{eq: cesaro 2} first establish the asymptotic independence between the local limits of rescaled increments of the geodesic $\Pi$ in $\cL$ from $(0, 0)$ to $(0, 1)$ at the interior point $t\in (0, 1)$, $2^{2i}(\Pi(2^{-3i+1}~+t)-\Pi(2^{-3i}+t))$, $i\ge 1$ and the directed landscape $\cL(\cdot, r; \cdot, s)$, at times $0\le r < s \le 1$ as $i\to \infty$. 

    More precisely, we will prove that for any bounded continuous $\phi$ and bounded random $H$, measurable with respect to the sigma algebra generated by slices of the directed landscape $\cL(\cdot, s; \cdot, r)$ for $0\le s < r\le 1$,
    \begin{align}\label{eq: asymp brown-bessel mixing}
        \lim_{i\to \infty}\mathbb{E}[H\cdot\phi(Z_i)] = \E\phi(Y_1-X_1)\cdot \mathbb{E} H\,,
    \end{align}    
    where $Z_i = 2^{2i}(\Pi(2^{-3i+1}+t)-\Pi(2^{-3i}+t))$, $i\ge 1$.

    By continuity and a standard measure theoretic argument (essentially an application of the martingale convergence theorem to the bounded martingale $\mathbb{E}[H|\mathcal{F}_\delta]$ as $\delta \searrow 0$ with $\mathcal{F}_\delta$ as below), to establish \eqref{eq: asymp brown-bessel mixing}, it suffices to show that for any $\delta > 0$ and $H_\delta$ bounded measurable with respect to the sigma algebra $\mathcal{F}_\delta$ of the directed landscape $\cL$ generated by time slices (recall $t$ is fixed) $0\le s < r\le t-\delta$, $t+\delta \le s < r\le  1$, we have 
    \begin{align}\label{eq: asymp brown-bessel mixing nbhd}
        \lim_{i\to \infty}\mathbb{E}[H_\delta\cdot\phi(Z_i)] = \E\phi(Y_1-X_1)\cdot \mathbb{E} H_\delta\,.
    \end{align} 
    
    Let $m:= \delta/2$. Conditioning on the sigma algebra with $\e_i = 2^{-i}$,
    \[
    \mathcal{F}^{i, m, t}:=\sigma\big( \{\cL(\cdot, s; \cdot , r) : 0\le s < r\le t+\e^3_i-m, \text{ or } t+2\e_i^3+m \le s < r\le  1 \big)\,,
    \]
    since the random variables $H_\delta$ are $\mathcal{F}^{i, m, t}$-measurable for all $i\ge 1$ sufficiently large, it suffices to show
    \[
    \mathbb{E}\big[ \phi(Z_i)\big\vert \mathcal{F}^{i, m, t}\big] \to \E\phi(Y_1-X_1)\,,\qquad i \to \infty
    \]
    in $L^1$ as $i\to \infty$. Observe that the rescaled geodesic points $\Pi(\e^3_i+t)$, $\Pi(2\e^3_i+t)$ can be expressed as the interior points $\Gamma(\e^3_i+t), \Gamma(2\e^3_i+t)$ of the unique $\cL$ geodesic $\Gamma$ on $[t+\e^3_i-m, t+2\e_i^3+m]$ with endpoints $X^{i, m, t} := \Gamma(t+\e^3_i-m), Y^{i, m, t} := \Gamma(t+2\e_i^3+m)$ given by
    \[
    (X^{i, m, t}, Y^{i, m, t}) = \operatorname*{argmax}_{x, y\in \R}\cL(0, 0; x, t+\e^3_i-m)+\cL(x, t+\e^3_i-m; y, t+2\e_i^3+m)+ \cL(y, t+2\e_i^3+m; 0, 1)\,.
    \]
    By the invariance of the directed landscape under translation and scaling, Lemma \ref{lemma: invariance dir land}, translating by $-t-\e^3_i+m$ and rescaling time by $m$, $\Pi(\e^3_i+t)$, $\Pi(2\e^3_i+t)$ have the same law as the rescaled interior points $m^{2/3}\Gamma^{i, m, t}(1), m^{2/3}\Gamma^{i, m, t}(1+m^{-1}\e^3_i)$ of the unique $\cL$ geodesic $\Gamma^{i, m, t}$ on $[0, 2+m^{-1}\e^3_i]$ with endpoints $\widetilde{X}^{i, m, t} := \Gamma^{i, m, t}(0), \widetilde{Y}^{i, m, t} := \Gamma^{i, m, t}(2+m^{-1}\e^3_i)$ given by
    \[
    (\widetilde{X}^{i, m, t}, \widetilde{Y}^{i, m, t}) = \operatorname*{argmax}_{x, y\in \R}\big(\widetilde{H}^{i,m,t}(x)+\cL(x, 0; y, 2+m^{-1}\e^3_i) + \widetilde{G}^{i, m, t}(y)\big)\,,
    \]
    for $\widetilde{H}^{i,m,t}(x) = m^{-1/3}H^{i, m, t}(m^{2/3}x)$,  $\widetilde{G}^{i,m,t}(y) = m^{-1/3}G^{i, m, t}(m^{2/3}y)$ and
    \begin{align*}
        H^{i, m, t}(x) &:= m^{1/3}\cL(0, -m^{-1}(t+\e^3_i-m); m^{-2/3}x, 0)\,,\\
        G^{i, m, t}(y)&:= m^{1/3}\cL(m^{-2/3}y, 2+m^{-1}\e^3_i; 0, m^{-1}(1-t-\e^3_i+m))\,, \quad x, y \in \R\,,
    \end{align*}
    which are independent from the interior landscape $\cL(\cdot, s; \cdot, r)$ for all $0\le s < r \le 2+m^{-1}\e^3_i$. Taking any fixed $0< m < (1-t)/2\land t/4$ gives
    \[
    \liminf_{i\to \infty}m^{-1}(1-t-2\e^3_i+m)-2 >1\,,\text{ and }\liminf_{i\to \infty}m^{-1}(t+\e^3_i-m) > \frac{t}{2m}>0\,.
    \]
    Hence, by Theorem \ref{thm: mut abs cont airy bm}, we have the Radon-Nikodym derivatives of $H^{i, m, t}-H^{i, m, t}(0)$ and $G^{i, m, t}-G^{i, m, t}(0)$  against rate two Brownian motion on any fixed compact set are uniformly bounded in $i$ (and since $m$ is fixed, the same holds for the tilded versions $\widetilde{H}^{i,m,t}-\widetilde{H}^{i,m,t}(0)$ and $\widetilde{G}^{i,m,t}-\widetilde{G}^{i,m,t}(0)$). This means we can express the conditional expectation as
    \[
    \mathbb{E}\big[ \phi(Z_i)\big\vert \mathcal{F}^{i, m, t}\big] \overset d = \mathbb{E}\big[ \phi(\e_i^{-2}m^{2/3}\Gamma^{i, m, t}(1+m^{-1}\e^3_i)-\e_i^{-2}m^{2/3}\Gamma^{i, m, t}(1))\big\vert H^{i, m, t}\,, G^{i, m, t}\big]\,.
    \]
    Now, observe that for every $b> 0$, we have by the boundedness of $\phi$
    \begin{align*}
        &\mathbb{E}\big[|\mathbb{E}\big[ \phi(Z_i)\big\vert \mathcal{F}^{i, m, t}\big]-\mathbb{E}\phi(Y_1-X_1)|\big]\\
        &\le 2\sup_{x\in \R}|\phi(x)|\cdot\PP(|\widetilde{X}^{i, m, t}|\lor |\widetilde{Y}^{i, m, t}|\ge b)+\mathbb{E}\big[|\mathbb{E}\big[ \phi(Z^{b, m, t}_i)\big\vert H^{i, m, t}\,, G^{i, m, t}\big]-\mathbb{E}\phi(Y_1-X_1)|\big]\,,
    \end{align*}
    where $Z^{b, m, t}_i := \e_i^{-2}m^{2/3}\Gamma^{b,i, m, t}(1+m^{-1}\e^3_i)-\e_i^{-2}m^{2/3}\Gamma^{b,i, m, t}(1)$ and $\Gamma^{b, i, m, t}$ is the unique $\cL$ geodesic on $[0, 2+m^{-1}\e^3_i]$ with endpoints $\widetilde{X}^{b,i, m, t} := \Gamma^{b, i, m, t}(0), \widetilde{Y}^{b,i, m, t} := \Gamma^{b, i, m, t}(2+m^{-1}\e^3_i)$ given by
    \[
    (\widetilde{X}^{b, i, m, t}, \widetilde{Y}^{b, i, m, t}) = \operatorname*{argmax}_{x, y\in [-b, b]}\big(\widetilde{H}^{i, m, t}(x)+\cL(x, 0; y, 2+m^{-1}\e^3_i)+ \widetilde{G}^{i, m, t}(y)\big)\,.
    \]
    Since $m^{2/3}\widetilde{X}^{i,m, t}, m^{2/3}\widetilde{Y}^{i, m , t}$ are equal in law to $\Pi(t+\e^3_i-m), \Pi(t+2\e^3_i+m)$ respectively, we have $|\widetilde{X}^{i, m, t}|\lor |\widetilde{Y}^{i, m, t}|$ is tight in $i\ge i_*$, for some $i_*(t, m) \ge 1$ ensuring $0<t+\varepsilon_i^3-m<t+2\varepsilon_i^3+m<1$. Indeed, by Proposition 12.3 in \cite{DOV} we have 
    \begin{align*}
        \sup_{i\ge i_*}\PP(|\widetilde{X}^{i, m, t}|\lor |\widetilde{Y}^{i, m, t}|\ge b)&= \sup_{i\ge i_*}\PP(|\Pi(t+\e^3_i-m)|\lor |\Pi(t+2\e_i^3+m)|\ge bm^{2/3})\\
        &\le \PP\big(\sup_{u\in [0, 1]}|\Pi(u)|\ge bm^{2/3}\big) \to 0 \,,\qquad b \to \infty\,.
    \end{align*}
    Moreover, for any fixed $b> 0$, \cite[Corollary 6.7]{sarkarthrehalves2022} applied to the truncated profiles $(\widetilde{H}^{i, m, t})^b$, $(\widetilde{G}^{i, m, t})^b$ (recalling the notation in the remark after Lemma \ref{lemma: geod bdry cond}) with $\e^3 = m^{-1}\e^3_i$ and $t = 2$ gives
    \[
     \mathbb{E}\big[ \phi(Z^{b, m, t}_i)\big\vert H^{i, m, t}\,, G^{i, m, t}\big]\eqd \mathbb{E}\phi(Y_1-X_1)\,,\qquad i \to \infty\,,
    \]
    which by the boundedness of $\phi$ immediately yields $L^1$ convergence as $i\to \infty$. This means that
    \[
    \mathbb{E}\big[|\mathbb{E}\big[ \phi(Z^{b, m, t}_i)\big\vert H^{i, m, t}\,, G^{i, m, t}\big]-\mathbb{E}\phi(Y_1-X_1)|\big]\to 0\,,\quad  i\to \infty\,.
    \]
    We thus obtain \eqref{eq: asymp brown-bessel mixing nbhd}.

    Now, \eqref{eq: asymp brown-bessel mixing} implies the C\'{e}saro means also converge
    \[
    \lim_{n\to \infty}\mathbb{E}[G(n)\cdot H] = \E\phi(Y_1-X_1)\cdot \mathbb{E} H\,,
    \]
    for all $H$ as above.

    To conclude, take any $c\neq  \E\phi(Y_1-X_1)$, subsequence $n_k\to \infty$ as $k\to \infty$ and set $H = 1_{E_c}$ in \eqref{eq: asymp brown-bessel mixing}, with 
    \[
    E_c := \{ G(n_k) \to c \,, \quad k\to \infty\}
    \]
    to obtain $\PP(E_c) = 0$.
\end{proof}

\begin{proof}[Proof of Theorem \ref{thm: sing}] Since by Theorem \ref{thm: distinct-displacement-laws} the displacements $Y-X$ and $Y_1-X_1$ have different distributions, there exists (see Corollary \ref{cor: distinguishing-observable} for an explicit example) a bounded continuous function $\phi$ such that 
\[\E\phi(Y-X)\neq \E\phi(Y_1-X_1)\,.\]
Now this theorem follows from the above two theorems taking $c = \E\phi(Y-X)$.
\end{proof}

In the following theorem we show that indeed, the laws of $Y-X$ and $Y_1-X_1$ have different distributions. Proposition~3.9 in \cite{DDV24} supplies
a sharp local upper-tail estimate for the directed landscape.
Theorem~1.1 in \cite{DDV24}, together with the rate-function description in Section \ref{sec: prelim} (see also Section~1.1 in \cite{DDV24}) supplies a large-deviation lower bound for the probability the directed landscape takes values in a neighbourhood of an explicitly constructed deterministic metric. We explain both applications below.

\begin{theorem}\label{thm: distinct-displacement-laws}
Let $\mathcal{A}_1$ be a parabolic Airy$_2$ process and let $\mathcal{S}$ be
a standard Airy sheet. Let $W$ be a two-sided Brownian motion
with diffusion parameter $2$, and let $B$ and $R$ be a two-sided rate one Brownian motion and a two-sided
Bessel-$3$ process with diffusion parameter $1$, respectively.
Assume that these objects are mutually independent. Define
\[
(X,Y)
=
\operatorname*{arg\,max}_{(x,y)\in\mathbb R^2}
\bigl\{\mathcal{A}_1(x)-\mathcal{A}_1(0)+\mathcal{S}(x, y)+W(y)\bigr\},
\]
and
\[
(X_1,Y_1)
=
\operatorname*{arg\,max}_{(x,y)\in\mathbb R^2}
\bigl\{B(x)-R(x)+\mathcal{S}(x, y)-B(y)-R(y)\bigr\}.
\]
Call the displacements $D=Y-X$ and $D_1=Y_1-X_1$. Then
\begin{align}
\limsup_{r\to\infty}r^{-3}\log\mathbb P(|D_1|>r)
&\le -\frac{5}{12}.
\label{eq:bulk-tail-upper}\\
\liminf_{r\to\infty}r^{-3}\log\mathbb P(D>r)
&\ge -\frac{28-8\sqrt6}{25},
\label{eq:starting-tail-lower}\,.
\end{align}
In particular, $D$ and $D_1$ have different distributions.
\end{theorem}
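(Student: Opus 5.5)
The plan is to prove the two tail bounds separately and then note that they are incompatible. Since $(28-8\sqrt6)/25\approx 0.3362<5/12\approx 0.4167$, the bounds give $\mathbb P(D>r)\ge e^{-0.34r^3}\gg e^{-0.41 r^3}\ge \mathbb P(|D_1|>r)$ for large $r$. So $D\ne D_1$ in law, and one can take for instance $\phi(x)=\min(1,(|x|-r)_+)$ for a suitable large $r$.

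For the upper bound \eqref{eq:bulk-tail-upper}, on $\{|D_1|>r\}$ the maximiser satisfies
\[
B(X_1)-R(X_1)-B(Y_1)-R(Y_1)+\mathcal S(X_1,Y_1)\ \ge\ \mathcal S(0,0)\ge -O(1).
\]
The boundary terms are at most $-R+|B|$, which is of order $\sqrt{|x|}$ and much smaller than $r^2$. So essentially we need $\mathcal S(x,y)+(x-y)^2\ge (x-y)^2-o(r^2)$ for some $|x-y|>r$, and the cost should be at least the minimal rate needed to push $\mathcal S(x,y)$ from $-(x-y)^2$ up to $\approx 0$. The one-point upper tail gives $\mathbb P(\mathcal S(x,y)+(x-y)^2>a)\approx e^{-\frac43 a^{3/2}}$; with $a=r^2$ this is only $e^{-\frac43 r^3}$, which is stronger than needed. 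The catch is the union over the location $x$, because $-R$ penalises $|x|$ only by $\sqrt{|x|}$. I would discretise $x,y$ on a mesh of size $O(1)$ over a window $|x|,|y|\le e^{r^3}$ and control $|x|$ beyond that using \eqref{eq: airy shape bnds}. I would also use the sharp local estimate (Proposition 3.9 of \cite{DDV24}) to bound the supremum over unit boxes. This gives $r^{-3}\log\mathbb P\le -\frac43+o(1)$, and in particular $\le-\frac5{12}$. If Bessel conditioning interacts badly, I fall back on the weaker constant $5/12$, which leaves room to spend part of the displacement ($y-x$ split with a fraction $\lambda$ absorbed in Brownian fluctuation costs $\lambda^2 r^2/4$-type Gaussian tails). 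I would then optimise over the split to obtain exactly $5/12$.

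For the lower bound \eqref{eq:starting-tail-lower}, I construct an explicit scenario. The Airy term $\mathcal A_1(x)-\mathcal A_1(0)\approx -x^2$ penalises $x$ away from $0$, while $W$ is free. So I choose $X\approx \alpha r$ for some $\alpha$ (possibly negative), $Y\approx (1+\alpha) r$, and require that:
\begin{enumerate}
\item $W$ rises linearly to height $\beta r^2$ near $y=(1+\alpha)r$, at Gaussian cost $e^{-\beta^2 r^3/(4(1+\alpha))}$;
\item $\mathcal A_1$ is raised at $x=\alpha r$ by some amount, using the Airy upper tail;
\item $\mathcal S$ has a planted geodesic from $(\alpha r,0)$ to $((1+\alpha)r,1)$.
\end{enumerate}
The third item is obtained via the rescaling $\varepsilon=1/r$ in Theorem \ref{thm: large dev}, with the planted measure $e_\mu$ on the linear path with density $c$. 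Its cost is $\frac43 c^{3/2}$ by \eqref{eq: rate function}, and it gains $c-1$ against the Dirichlet penalty. By independence the probabilities multiply. I then maximise the total exponent over $(\alpha,\beta,c,\ldots)$ subject to this configuration beating the competitor near $(0,0)$. The competitor estimate uses the upper bound of Theorem \ref{thm: large dev} only on the complementary closed set, plus shape bounds for the independent processes. The optimisation should produce the algebraic constant $(28-8\sqrt6)/25$.

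The main obstacle is this lower bound. I must justify that on the open large-deviation neighbourhood the argmax is actually near the planted path, uniformly over the random environment. I also need the Airy increment tail in the form $\mathbb P(\mathcal A_1(x)-\mathcal A_1(0)>-x^2+a)$, which I would again derive from Theorem \ref{thm: large dev} by viewing $\mathcal A_1=\mathcal L(0,0;\cdot,1)$ and planting a path. The cleanest route is to realise the whole configuration inside a single directed landscape on a longer time interval. That makes the event one open set for Theorem \ref{thm: large dev}, and the matching of the explicit optimiser I would verify last by calculus.
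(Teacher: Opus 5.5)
Your overall architecture matches the paper: two tail bounds, then the observable $\min\{1,(|z|-r)_+\}$, with $\mathcal A_1$ and $\mathcal S$ realised in one landscape on $[-1,1]$ and an LDP lower bound on an open neighbourhood plus localisation of the maximiser. However, both tail estimates have genuine gaps.

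\textbf{Upper bound for $D_1$.} The reduction ``the boundary terms are $O(\sqrt{|x|})\ll r^2$'' describes typical behaviour and fails at the probability scale that matters. For $|x-y|=r$ one has $\mathbb P(B(x)-B(y)\ge\beta r^2)=e^{-\beta^2r^3/2+o(r^3)}$, which is the same speed as the Airy-sheet upper tail. So the $-\frac43$ you aim for is not true. For instance, plant a segment from $(0,0)$ to $(r,1)$ with density $c<1$ (cost $\frac43c^{3/2}$) and let $B$ decrease linearly by $\beta r^2$ on $[0,r]$ (cost $\frac12\beta^2$). Then $(0,r)$ is the maximiser as soon as $\beta^2/4<\beta+c-1$ (with $\beta\le2$), and optimising gives a total cost of about $0.61$.

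The paper's mechanism is precisely to let the Gaussian and Airy terms share the deficit. On unit boxes, $\sup_{I_i\times I_j}H_1\le M_{ij}+Z_{ij}-(d-1)^2-R_i$. Exponential Chebyshev with parameter $d$ then uses $\mathbb E e^{dM_{ij}}\le e^{(1/2+\e)d^3+O(d^2)}$ and $\mathbb E e^{dZ}\le e^{(1/12+o(1))d^3}$ (the latter from Proposition 3.9 of \cite{DDV24}). This gives $1-\frac12-\frac1{12}=\frac5{12}$. Your fallback gestures at this, but attributes it to ``Bessel conditioning'' and gives no mechanism.

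Your localisation also cannot work. A union bound over the window $|x|,|y|\le e^{r^3}$ costs a factor of at least $e^{r^3}$, which shifts the exponent by $+1$ and swamps the $-\frac5{12}$. The missing idea is to sum over all boxes at once. Bound $e^{-dR_i}\le e^{-R_i}$, then use the independence of $R$ and $\sum_{i\in\mathbb Z}\mathbb E e^{-R_i}<\infty$ (transience of Bessel-$3$, $\mathbb E e^{-R_i}\lesssim|i|^{-3/2}$).

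\textbf{Lower bound for $D$.} Requiring the planted configuration only to beat the competitor near $(0,0)$ is not enough, and the linear ramp you propose creates a competitor that wins. Take $W$ linear of slope $\beta/(1+\alpha)$ on $[0,(1+\alpha)r]$. The pair $(y/2,y)$ with $y=\beta r/(1+\alpha)$ corresponds to the straight path from $(0,-1)$, ignoring all planting. It has value about $\frac{\beta^2}{2(1+\alpha)^2}r^2>0$ and displacement $\frac{\beta r}{2(1+\alpha)}<r$ in the relevant range. Once this constraint is added, a short computation with the one-point costs $\frac43a^{3/2}$ and $\frac43c^{3/2}$ suggests the linear-ramp family cannot go below roughly $0.418$. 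That is above $\frac5{12}$, so it would not even separate the laws, let alone give $\frac{28-8\sqrt6}{25}$.

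That constant is not the output of a generic optimisation; it comes from a specific construction you are missing, in two parts.
\begin{enumerate}
\item One planted path $f$ on $[-1,1]$ (slope $\alpha$, then slope $1$) carries a single density $c_\alpha=(1-\alpha)^2/4\ge\sup_{s<t}\operatorname{Var}_{[s,t]}(f')$. This makes $f$ a geodesic between any two of its points.
\item The terminal profile $b$ is chosen as the envelope forced by \eqref{eq:terminal profile bound} over all last exit times from $f$, with $b(1+\alpha)=(1+\alpha)^2/2$. In particular $b(y)=y^2/2$ near $0$, which is exactly what neutralises the straight-path competitor above.
\end{enumerate}
The total rate $\frac83c_\alpha^{3/2}+\frac14\int|b'|^2$ is then the cubic $F(\alpha)$, minimised at $\alpha_*=(7-2\sqrt6)/5$.
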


We now start with the proof of the Brownian-Bessel environment upper tail asymptotics.

\begin{lemma}\label{lemma: brown-bessel upper bound}
    With $D_1$ as in the statement of Theorem \ref{thm: distinct-displacement-laws}, we have 
    \[
    \limsup_{r\to\infty}r^{-3}\log\mathbb P(D_1>r)\le -\frac{5}{12}\,.
    \]
\end{lemma}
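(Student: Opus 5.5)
The plan is to turn the event $\{D_1>r\}$ into a large deviation of either the Airy sheet or the Brownian part, and then optimise a two-term exponent.

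\textbf{Step 1 (reduction).} Write $F=B-R$ and $G=-B-R$. Comparing the maximiser $(X_1,Y_1)$ with the point $(0,0)$, and using $F(0)=G(0)=0$, gives
\[
F(X_1)+\mathcal S(X_1,Y_1)+G(Y_1)\ \ge\ \mathcal S(0,0).
\]
Put $\widehat{\mathcal S}(x,y):=\mathcal S(x,y)+(x-y)^2$. Since $R\ge 0$, on $\{Y_1-X_1>r\}$ there exist $x<y$ with $y-x>r$ and
\begin{equation*}
\widehat{\mathcal S}(x,y)-\mathcal S(0,0)+B(x)-B(y)\ \ge\ (y-x)^2 .
\end{equation*}

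\textbf{Step 2 (localisation).} Using the shape bound \eqref{eq: airy shape bnds} and the Gaussian tails of $B$, I discard the event $|x|\vee|y|\ge Mr$ for a large constant $M$. On that event the parabola $-(y-x)^2$, together with the parabolic decay in $x$ and $y$ separately, gives an exponent $\le -5r^3/12$ once $M$ is large.

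I also discard the lower tail $\{\mathcal S(0,0)<-\eta r^2\}$. The Tracy--Widom lower tail is of order $\exp(-c r^6)$, so this is negligible.

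\textbf{Step 3 (discretisation and union bound).} Cover the remaining region $\{(x,y):|x|\vee|y|\le Mr,\ y-x>r\}$ by $O(r^{2})$ unit boxes, which is subexponential. On a box near $(x,y)$ with $\ell=y-x$, split the required $\ell^2$ as $a+b$, where $a$ is carried by the sheet and $b$ by the Brownian increment.

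For the sheet, the sharp local upper-tail bound of Proposition~3.9 in \cite{DDV24} (translated via stationarity of $\widehat{\mathcal S}$) gives
\[
\PP\Bigl(\sup_{\mathrm{box}}\widehat{\mathcal S}>a\Bigr)\le \exp\bigl(-(\tfrac43+o(1))a^{3/2}\bigr).
\]
For the Brownian part, $B(x)-B(y)$ is Gaussian with variance at most $|x|+|y|$, which is at least $\ell$ and at most $2Mr$. Modulus-of-continuity bounds over a unit box cost only lower-order terms.

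By independence of $\mathcal S$ and $B$, and a further discretisation of the split $a+b=\ell^2$ into $O(r^2)$ values, the probability is bounded by
\[
\exp\Bigl(-\min_{a+b=\ell^2}\Bigl[\tfrac43 a^{3/2}+\tfrac{b^2}{2(|x|+|y|)}\Bigr]+o(r^3)\Bigr).
\]

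\textbf{Step 4 (optimisation, the main obstacle).} The heart of the argument is the deterministic minimisation over $\ell\ge r$, $|x|+|y|\ge \ell$, and the split $a=\alpha\ell^2$.

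When $x\le 0\le y$, so that $|x|+|y|=\ell$, the exponent is $\ell^3\bigl[\tfrac43\alpha^{3/2}+\tfrac12(1-\alpha)^2\bigr]$. This is minimised at $\sqrt\alpha=\sqrt2-1$, with value about $0.438\,\ell^3$, which exceeds $\tfrac5{12}\ell^3$.

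The delicate regime is when $x,y$ lie on the same side of $0$, since then the Brownian variance $|x|+|y|$ exceeds $\ell$. There I will exploit that $B$ enters through $F$ and $G$ with opposite signs. Concretely, $B(x)-B(y)$ is only an increment over length $\ell$, while $|x|+|y|$ enters through $\mathcal S(0,0)$ being compared to a far-away point. That comparison I would instead control by also paying the parabola penalty $-(x^2\wedge y^2)$ that the sheet shape imposes relative to $(0,0)$.

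If the resulting constant comes out worse than $5/12$, I would first try to recover it by using the two-point structure of the Bessel terms, namely the extra cost $R(x)+R(y)\gtrsim$ of order $\sqrt{|x|}$ at typical scale. Failing that, I would bound the variance of $B(x)-B(y)$ crudely by $2\ell$, which gives exactly $\min_\alpha\bigl[\tfrac43\alpha^{3/2}+\tfrac14(1-\alpha)^2\bigr]$; I expect the paper's $5/12$ to arise from such a bookkeeping choice. Taking $\limsup r^{-3}\log$ then yields the claim.
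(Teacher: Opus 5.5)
\textbf{Step 2 is a genuine gap.} The Brown--Bessel environment has no ``parabolic decay in $x$ and $y$ separately''. That is a feature of the Brown--Airy environment, where $\mathcal A_1(x)\approx -x^2$.

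\begin{itemize}
\item Here $\widehat{\mathcal S}(x,y)$ and $B(x)-B(y)$ are both invariant in law under the diagonal shift $(x,y)\mapsto(x+c,y+c)$.
\item The error $c\log^{2/3}(2+|x|+|y|)$ in the shape bound grows rather than decays.
\item So neither the shape bound nor Gaussian tails can distinguish boxes near the origin from boxes far out along the diagonal.
\end{itemize}

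In fact, once you drop $R$ in Step 1, the event you are left with has probability one. Take $x_n=n(r+2)$ and $y_n=x_n+r+1$. The increments $B(x_n)-B(y_n)$ are i.i.d.\ $N(0,r+1)$, hence a.s.\ unbounded. Each $\widehat{\mathcal S}(x_n,y_n)$ has the law of $\widehat{\mathcal S}(0,r+1)$ and is independent of $B$. Conditioning on $B$ and picking an $n$ with a large increment shows that your Step~1 inequality holds for some $n$ almost surely. So the union bound over the complement of your $O(r^2)$ boxes diverges.

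The only term that penalises distance from the origin is $-R(x)-R(y)$, and it must be kept. Its typical size $\sqrt{|x|}$ is not what helps: at a fixed far point, $R$ is small with only polynomially small probability. What helps is that the averaged penalty is summable. The paper does this as follows.

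\begin{itemize}
\item It bounds $\sup_{I_i\times I_j}H_1\le M_{ij}+Z_{ij}-(d-1)^2-R_i$.
\item It applies an exponential Chebyshev bound and uses $\mathbb E e^{-dR_i}\le \mathbb E e^{-R_i}$.
\item It uses $\sum_{i\in\Z}\mathbb E e^{-R_i}<\infty$. To see this, realise $R=|\mathbf B|$ for a three-dimensional Brownian motion $\mathbf B$, so $\mathbb E e^{-R_i}\lesssim |i|^{-3/2}$.
\end{itemize}

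Summing over all locations then costs only a constant, and no localisation is needed.

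Your Step~4 also rests on a misconception. Since $B$ is a two-sided Brownian motion, $B(x)-B(y)$ has variance exactly $|x-y|=\ell$, whatever the signs of $x$ and $y$. So there is no delicate same-side regime. Your value $\min_\alpha\bigl[\tfrac43\alpha^{3/2}+\tfrac12(1-\alpha)^2\bigr]=\tfrac{8\sqrt2-10}{3}\approx0.438$ applies to every box and exceeds $5/12$.

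Your fallback would have failed. With variance $2\ell$, the minimum of $\tfrac43\alpha^{3/2}+\tfrac14(1-\alpha)^2$ is about $0.24<5/12$.

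The paper's $5/12$ comes instead from a Chernoff bound with the unoptimised parameter $\lambda=d$. It uses $\log\mathbb E e^{dM_{ij}}\le(\tfrac12+\varepsilon)d^3+O_\varepsilon(d^2)$ and $\limsup_{\lambda\to\infty}\lambda^{-3}\log\mathbb E e^{\lambda Z}\le\tfrac1{12}$, which give $1-\tfrac12-\tfrac1{12}$. Optimising over $\lambda=td$ gives $\sup_t\bigl(t-\tfrac{t^2}{2}-\tfrac{t^3}{12}\bigr)=\tfrac{8\sqrt2-10}{3}$, which is your constant, by Legendre duality.

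So your tail-convolution version of Steps~3--4 is a valid and slightly sharper alternative to the paper's moment bounds, once two repairs are made:
\begin{itemize}
\item correct the variance to $\ell$;
\item carry the Bessel weights through the union bound. For example, convexity of your exponent in the level turns the extra $R_i$ into a factor $e^{-c\ell R_i}$.
\end{itemize}
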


\begin{proof}
Let $\mathcal{A}_1$ be a parabolic Airy$_2$ process and let $\mathcal{S}$ be a standard Airy sheet. Let $W$ be a two-sided Brownian motion with diffusion parameter $2$, and let $B$ and $R$ be,
respectively, a two-sided Brownian motion and a two-sided
Bessel-$3$ process, both with diffusion parameter $1$.

Consider the random field
\[
H_1(x,y)
=
B(x)-B(y)-R(x)-R(y)+\mathcal{S}(x, y)\,,\quad x, y \in \R\,.
\]
Since $B(0)=R(0)=0$, its almost surely unique maximizer satisfies $H_1(X_1,Y_1)\ge H_1(0,0)=\mathcal{S}(0,0)$.

We will now control the maximum of the field $H_1$ on $\R^2$ by a spatial discretisation of the plane using the unit lattice $\Z^2$. For $i,j\in\mathbb Z$, put $I_i=[i,i+1]$ and $d=|j-i|$,
and define
\[
M_{ij}
=
\sup_{x\in I_i\,,y\in I_j}
\bigl(B(x)-B(y)\bigr),
\qquad
Z_{ij}
=
\sup_{x\in I_i\,,y\in I_j}
\bigl(\mathcal{S}(x, y)+(x-y)^2\bigr),
\qquad
R_i=\inf_{x\in I_i}R(x).
\]
For $d\ge1$, non-negativity of the Bessel process $R$ gives
\begin{equation}\label{eq:bulk-box-bound}
\sup_{x\in I_i\,, y\in I_j}H_1(x, y)
\le M_{ij}+Z_{ij}-(d-1)^2-R_i.
\end{equation}

We first estimate the Brownian term $M_{ij}$. Let $O_i=\sup_{x\in I_i}|B(x)-B(i)|$. Then $M_{ij}\le B(i)-B(j)+O_i+O_j$. The increment $B(i)-B(j)$ is centered Gaussian with variance
$d$, while the unit-interval oscillations have Gaussian
exponential-moment bounds. Applying H\"older's inequality to isolate $B(i)-B(j)$ with
an exponent arbitrarily close to $1$ gives for all $\e>0$ sufficiently small,
\begin{equation}\label{eq: brownian box mgf}
\mathbb E e^{dM_{ij}}
\le
\exp\left(\left(\frac12+\e\right)d^3
+C_\e d^2+C_\e\right)\,,
\end{equation}
uniformly in $i,j$ with $d=|j-i|\ge1$ for some $\e$-dependent constant $C_\e> 0$. Indeed, for $i, j, k\ge 1$ with $G_{ij}=B(i)-B(j)$, we have the estimate $M_{ij}\le G_{ij}+O_i+O_j$. Note $G_{ij}$ is a centred Gaussian with variance
$d=|j-i|$. Consequently, we compute exactly
\[
\mathbb E e^{\lambda G_{ij}}
=\exp\left(\frac{\lambda^2d}{2}\right).
\]
By independence and stationarity of Brownian increments, we can bound the exponential moments of $O_k$, uniformly
in $k$ as follows,
\begin{equation}\label{eq: oscillation sum mgf}
\mathbb E e^{\lambda(O_i+O_j)}
\le
\left(
\mathbb E e^{2\lambda O_i}
\mathbb E e^{2\lambda O_j}
\right)^{1/2}
\le4e^{2\lambda^2}.
\end{equation}
Fix $\e>0$. H\"older's inequality with exponents 
\[
p=1+2\e,
\qquad
q=\frac{p}{p-1}
=\frac{1+2\e}{2\e}
\]
and
\eqref{eq: oscillation sum mgf} imply
\begin{align*}
\mathbb E e^{dM_{ij}}
&\le
\mathbb E\left[e^{dG_{ij}}e^{d(O_i+O_j)}\right]\le
\left(\mathbb E e^{pdG_{ij}}\right)^{1/p}
\left(\mathbb E e^{qd(O_i+O_j)}\right)^{1/q}\le
\exp\left(\frac{p}{2}d^3\right)
\left(4e^{2q^2d^2}\right)^{1/q}.
\end{align*}
Taking logarithms gives the explicit estimate
\[
\log\mathbb E e^{dM_{ij}}
\le
\left(\frac12+\e\right)d^3
+
\frac{1+2\e}{\e}\,d^2
+
\frac{2\e}{1+2\e}\log4.
\]
In particular,
\[
\log\mathbb E e^{dM_{ij}}
\le
\left(\frac12+\e\right)d^3
+C_\e d^2+C_\e,
\]
for some constant $C_\e$, uniformly in $i,j$ with $d=|j-i|\ge1$.

We now turn our attention to the terms $Z_{ij}$ in \eqref{eq:bulk-box-bound} and obtain exponential moment bounds. Consider the stationary Airy sheet (recall the spatial symmetries from \eqref{lemma: invariance dir land}),
\[
\widehat{\mathcal{S}}(x, y)=\mathcal{S}(x, y)+(x-y)^2,
\qquad
Z=\sup_{(x,y)\in[0,1]^2}\widehat{\mathcal{S}}(x, y).
\]
The above stationarity means every $Z_{ij}$ has the same law as $Z$.
With $u_0=(0,0;0,1)$,
\cite[Proposition 3.9]{DDV24} states that there is a universal
constant $c_0>0$ such that, for every $\eta\in(0,4/3)$,
there is $C_\eta<\infty$ satisfying
\begin{equation}\label{eq: dir land neighborhood tail}
\mathbb P\left(
\sup_{\|u-u_0\|_\infty\le\delta}\mathcal L(u)>h
\right)
\le
C_\eta
\exp\left(
-\bigl(\eta-c_0\delta^{1/3}\bigr)h^{3/2}
\right),
\end{equation}
for $h>0$ and $0<\delta\le1/5$. We now use the above local upper tail estimate \eqref{eq: dir land neighborhood tail} to obtain upper tail bounds for $Z$.

Now, $[0,1]^2$ using squares of width $2\delta > 0$. By stationarity of the Airy sheet, the supremum of $\widehat S$
on each such square has the same law as $Z_\delta
=
\sup_{|x|,|y|\le\delta}\widehat{\mathcal{S}}(x, y)$. Since $(x-y)^2\le4\delta^2$ there, we have $Z_\delta
\le
\sup_{|x|,|y|\le\delta}\mathcal L(x,0;y,1)
+4\delta^2$. Every point pair $(x,0;y,1)$ in this supremum belongs to the
$\delta$-neighbourhood of $u_0$. Thus a union bound and
\eqref{eq: dir land neighborhood tail} give for $h>4\delta^2$,
\[
\mathbb P(Z>h)
\le
\left\lceil\frac{1}{\delta}\right\rceil^2 C_\eta
\exp\left(
-\bigl(\eta-c_0\delta^{1/3}\bigr)
(h-4\delta^2)^{3/2}
\right).
\]
Taking first $h\to\infty$, then $\delta\downarrow0$, and finally
$\eta\uparrow4/3$ we obtain
\begin{equation}\label{eq:compact-airy-tail}
\limsup_{h\to\infty}
h^{-3/2}\log\mathbb P(Z>h)\le-\frac43.
\end{equation}
The tail estimates \eqref{eq: dir land neighborhood tail} give for every $\kappa\in(0,4/3)$, after augmenting constants,
\[
\mathbb P(Z>h)\le C_\kappa e^{-\kappa h^{3/2}},
\qquad h\ge0.
\]
We now obtain exponential moment bounds for $Z$, whose proof we relegate to the Appendix in Lemma \ref{lemma: Z exp moment bound}. In particular, we have
\[
\limsup_{\lambda\to\infty}
\lambda^{-3}\log\mathbb E e^{\lambda Z}
\le
\frac{4}{27\kappa^2}.
\]
Letting $\kappa\uparrow4/3$ yields
\begin{equation}\label{eq:airy-box-mgf}
\limsup_{\lambda\to\infty}
\lambda^{-3}\log\mathbb E e^{\lambda Z}
\le\frac1{12}.
\end{equation}

We will now treat the Bessel terms $R_i$ that appear in \eqref{eq: brownian box mgf}. We first show the summability of
\begin{equation}\label{eq:bessel-summability}
\sum_{i\in\mathbb Z}\mathbb E e^{-R_i}<\infty.
\end{equation}
Indeed, on the positive half-line we use the fact that the Bessel-$3$ process can be realised as the Euclidean norm of a standard Brownian motion. Hence, without loss of generality (since $R_i$ are mutually independent from the terms $M_{ij}$ and $Z_{ij}$), we can take
$R(t)=|\mathbf B(t)|$, where $\mathbf B$ is a standard
three-dimensional Brownian motion. For $i\ge1$, the reverse triangle inequality gives
\[
R_i
\ge
|\mathbf B(i)|
-
\sup_{0\le s\le1}|\mathbf B(i+s)-\mathbf B(i)|.
\]
The second term is independent of $\mathbf B(i)$ and has
finite exponential moments. Hence, we estimate
\[
\mathbb E e^{-R_i}
\le C\,\mathbb E e^{-|\mathbf B(i)|}
\le C i^{-3/2}\,,
\]
for some absolute constant $C>0$, which is summable. The proof for the negative half-line is identical by symmetry, proving
\eqref{eq:bessel-summability}.

Fix $\delta>0$. The GUE Tracy--Widom lower-tail estimate gives
\begin{equation}\label{eq:reference-value-tail}
\mathbb P\bigl(\mathcal{S}(0,0)<-\delta r^2\bigr)
\le C_\delta e^{-c_\delta r^6}
\end{equation}
for all sufficiently large $r$.
On the complementary event of \eqref{eq:reference-value-tail}, $S(0,0)\ge-\delta r^2$, if $|D_1|>r$, then the maximizing
pair $(X_1, Y_1)$ belongs to a box $I_i\times I_j$ with $d=|j-i|\ge r-1$,
and \eqref{eq:bulk-box-bound} implies
\[
M_{ij}+Z_{ij}
\ge (d-1)^2+R_i-\delta r^2.
\]
The variables $M_{ij}$, $Z_{ij}$, and $R_i$ are independent.
Applying the exponential Markov's inequality with parameter $d$ therefore
gives
\begin{align*}
&\mathbb P\bigl(
M_{ij}+Z_{ij}\ge(d-1)^2+R_i-\delta r^2
\bigr)\le
e^{-d(d-1)^2+\delta dr^2}
\mathbb E e^{dM_{ij}}\,
\mathbb E e^{dZ_{ij}}\,
\mathbb E e^{-dR_i}.
\end{align*}
By \eqref{eq: brownian box mgf} and \eqref{eq:airy-box-mgf},
for every $\e>0$ and all sufficiently large $d$,
this is at most
\[
\exp\left(
-\left(\frac5{12}-2\e\right)d^3
+O_\e(d^2)+\delta dr^2
\right)
\mathbb E e^{-dR_i}.
\]
Also,
$e^{-dR_i}\le e^{-R_i}$ for $d\ge 1$. Observe for $d\ge r-1$, $dr^2 \le d(d+1)^2 \le 4d^3$. Moreover, for each $i,d$, there are at most two choices of $j$.
Performing a union bound, summing over $i$ using \eqref{eq:bessel-summability}, and then
over $d\ge r-1$, yields for $0< 4\delta < \e < 5/36$,
\begin{align*}
    \mathbb P(|D_1|>r)&\le  C_\delta e^{-c_\delta r^6}+2\displaystyle \sum_{i\in \Z}\sum_{\substack{d\in \N\\ d \ge r-1}} \exp\left(
-\left(\frac5{12}-2\e\right)d^3
+O_\e(d^2)+\delta dr^2
\right)
\mathbb E e^{-dR_i}\\
&\stackrel{\eqref{eq:bessel-summability}}{\le}  C_\delta e^{-c_\delta r^6}+ 2\displaystyle \sum_{i\in \Z}\mathbb E e^{-R_i}\cdot \sum_{\substack{d\in \N\\ d \ge r-1}} \exp\left(
-\left(\frac5{12}-2\e\right)d^3
+O_\e(d^2)+\delta dr^2
\right)\\
&=  C_\delta e^{-c_\delta r^6}+2\displaystyle \sum_{i\in \Z}\mathbb E e^{-R_i}\cdot\displaystyle \sum_{\substack{d\in \N\\ d \ge r-1}} \exp\left(
-\left(\frac5{12}-3\e\right)d^3\right)\cdot\exp\left(
-\e d^3
+O_\e(d^2)+\delta dr^2
\right)\\
&\le  C_\delta e^{-c_\delta r^6}+2\displaystyle \displaystyle \sum_{i\in \Z}\mathbb E e^{-R_i}\cdot\exp\left(
-\left(\frac5{12}-3\e\right)(r-1)^3\right)\cdot\sum_{\substack{d\in \N\\ d \ge r-1}} \exp\left(
-\e d^3
+O_\e(d^2)+ 4\delta d^3
\right)\\
&\le  C_\delta e^{-c_\delta r^6}+C_{\delta, \e}\cdot\displaystyle \exp\left(
-\left(\frac{5}{12}-3\e\right)(r-1)^3\right)\,,
\end{align*}
for some constants $c_\delta, C_\delta, C_{\delta,\e}>0$. Hence, taking $r\to \infty$,
\[
\limsup_{r\to\infty}r^{-3}\log\mathbb P(|D_1|>r)
\le-\frac5{12}+3\e.
\]
The exceptional probability in \eqref{eq:reference-value-tail}
is negligible at speed $r^3$. Letting
$\e\downarrow0$ proves \eqref{eq:bulk-tail-upper}.
\end{proof}

We now move on to the proof of the Brownian-Airy environment upper tail asymptotics.

\begin{lemma}\label{lemma: brown-airy upper bound}
    With $D$ as in the statement of Theorem \ref{thm: distinct-displacement-laws}, we have 
    \[
    \liminf_{r\to\infty}r^{-3}\log\mathbb P(D>r)\ge -\frac{28-8\sqrt{6}}{25}\,.
    \]
\end{lemma}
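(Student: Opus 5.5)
The plan is to prove the lower bound by exhibiting an explicit \emph{open} event of probability at least $\exp(-(c+o(1))r^3)$ with $c=\frac{28-8\sqrt6}{25}$, on which $D>r$. Write $D=Y-X$ with $(X,Y)$ the maximiser of $H(x,y)=\mathcal{A}_1(x)-\mathcal{A}_1(0)+\mathcal{S}(x,y)+W(y)$. Rescale space by $r$ and heights by $r^2$. Under this scaling:
\begin{itemize}
\item $\mathcal{S}(rx,ry)/r^2$ is $\cL^{(\e)}(x,0;y,1)$ with $\e=r^{-1}$, up to the $1:2:3$ symmetries of Lemma \ref{lemma: invariance dir land}, so Theorem \ref{thm: large dev} gives deviations at speed $r^3$.
\item The rescaled path $y\mapsto W(ry)/r^2$ is a Brownian motion of variance $2/r^3$, so Schilder's theorem gives it speed $r^3$ with rate $\frac14\int|w'|^2$.
\item Since the paths are independent, a product event has a lower bound equal to the sum of the two rates.
\end{itemize}
For the Airy term I do not pay any cost. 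I keep $X$ in a window of size $o(r)$ around $0$. On such a window, $\mathcal{A}_1(x)-\mathcal{A}_1(0)$ is $o(r^2)$ with probability tending to one, and is $\le -(1-o(1))x^2+o(r^2)$ globally. This follows from the shape bound \eqref{eq: airy shape bnds} and Theorem \ref{thm: mut abs cont airy bm}.

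Next I fix the deterministic profiles, with parameters to be optimised.
\begin{itemize}
\item The Brownian profile is $w(y)=by$ for $y\in[0,\beta]$, frozen (constant) afterwards, with cost $\frac{b^2\beta}{4}$. For $y<0$ I take it to be $\approx 0$, at no cost.
\item The landscape profile is a planted metric $e_\mu$ as in \eqref{eq: emu}. Here $\mu$ has intensity $c_0$ on the straight segment from $(0,0)$ to $(\beta,1)$, with cost $\frac43 c_0^{3/2}$ by \eqref{eq: rate function}.
\end{itemize}
The deterministic field is $\Phi(x,y)=-x^2+e_\mu(x,0;y,1)+w(y)$. I compute $e_\mu$ explicitly. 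A path from $(x,0)$ to $(y,1)$ may jump onto the planted line, follow it, and leave it. Each excursion costs Dirichlet energy, so $e_\mu$ is an explicit piecewise quadratic expression in $(x,y)$. I then require $\Phi$ to have its unique global maximiser at some $(x^*,y^*)$ with $y^*-x^*>1$ strictly. This is a finite-dimensional inequality in $(b,\beta,c_0)$, and the competitors are points $(x,y)$ with $y\le x+1$. The resulting constant is
\[
c=\inf\Big\{\tfrac{b^2\beta}{4}+\tfrac43 c_0^{3/2} : \operatorname{argmax}\Phi \text{ has } y-x>1\Big\}.
\]
I expect this infimum to be attained on the boundary where $\Phi$ ties between the far maximiser and the near one, and to equal $\frac{28-8\sqrt6}{25}$; the $\sqrt6$ should come from the quadratic tie condition. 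If the purely linear $w$ with no planting gives a worse constant, that confirms the optimiser is a genuine mixture, which is why the constant is below $5/12$.

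Then I pass from the deterministic picture to probability. Let $U$ be the event that $\cL^{(1/r)}$ lies in a small bounded-set uniform neighbourhood of $e_\mu$, and the rescaled $W$ lies in a small uniform neighbourhood of $w$ on a large compact set. I also need global control at infinity. The shape bounds \eqref{eq: airy shape bnds} and the Gaussian tails of $W$ show that, with probability $1-o(1)$ (which does not affect the exponent), $H$ is smaller than its value near $(rx^*,ry^*)$ outside a box of size $Kr$. Strict separation of the maximiser of $\Phi$ then forces $D/r\in(1,\infty)$ on $U$. Applying the lower bound of Theorem \ref{thm: large dev} to the open neighbourhood, together with Schilder's theorem and independence, gives
\[
\liminf_{r\to\infty} r^{-3}\log\PP(D>r)\ge -\Big(\tfrac{b^2\beta}{4}+\tfrac43 c_0^{3/2}\Big)-o_\eta(1).
\]
Finally I let the neighbourhood size $\eta\downarrow0$ and optimise over the parameters.

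The main obstacle is the middle step: correctly computing $e_\mu(x,0;y,1)$ for all $(x,y)$, including paths that use only part of the planted segment, and verifying that no competitor $(x,y)$ with $y-x\le1$ beats the intended maximiser. This is what pins down the constant. A secondary point is that the topology in Theorem \ref{thm: large dev} is uniform convergence on \emph{bounded} sets, so I must ensure that the neighbourhood event, intersected with the shape-bound event, really controls $\operatorname{argmax} H$ over all of $\R^2$; the shape bounds are uniform in $\e$ and should handle this.
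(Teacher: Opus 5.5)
\textbf{There is a genuine gap: the profile family you optimise over cannot produce $\frac{28-8\sqrt6}{25}$.} Your probabilistic scaffolding matches the paper's: rescaling by $r$, the lower bound of Theorem \ref{thm: large dev} on an open neighbourhood of a planted metric, Schilder for $W$, independence, localisation in a box of size $Kr$, and strict separation of the deterministic maximiser. The failure is in the step that fixes the constant. The infimum over your family $(b,\beta,c_0)$ is $\frac43$, which is not even below $\frac5{12}$.

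To see this, write $\max\Phi$ as the maximum over paths $\gamma$ on $[-1,1]$ from $(0,-1)$ of $-\int_{-1}^1|\gamma'|^2+\mu(\mathfrak g\gamma)+w(\gamma(1))$. Here the undeviated term $-x^2$ is the Dirichlet value of the straight segment from $(0,-1)$ to $(x,0)$. There are two cases.
\begin{itemize}
\item If the optimal path does not charge the planted segment, it is straight. Then $x^*=y^*/2$ and $y^*=\min(b,\beta)$, so $y^*-x^*>1$ forces $b,\beta>2$, at cost $b^2\beta/4>2$.
\item If it does charge the segment, let $u_1$ be the first contact time. On $[-1,u_1]$ the path is the straight segment to $(\beta u_1,u_1)$. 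Optimising the joining time (the derivative is $\beta^2/(1+u_1)^2-c_0$) gives either $u_1=0$, $x^*=0$ and $\beta\le\sqrt{c_0}$, or $\beta/(1+u_1)=\sqrt{c_0}$ and $x^*=\beta-\sqrt{c_0}$. Since $w$ is constant beyond $\beta$, also $y^*\le\beta$. In both sub-cases $y^*-x^*\le\sqrt{c_0}$, so you need $c_0>1$ and pay $\frac43c_0^{3/2}>\frac43$.
\end{itemize}
The reason is geometric. A segment emanating from $(0,0)$, directly above the start $(0,-1)$, makes the displacement equal to the kink of the path where it joins the segment. A kink $\kappa$ supported by planting on one side only needs intensity at least $\kappa^2$ next to the kink, and with constant intensity you pay this over the whole unit time interval. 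So your expectation that the infimum equals $\frac{28-8\sqrt6}{25}$ is unfounded. As written, the argument yields at best $\liminf_r r^{-3}\log\PP(D>r)\ge-\frac43$.

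\textbf{How the paper gets the constant.} It realises $\mathcal A_1(x)=\cL(0,-1;x,0)$ and $\mathcal S(x,y)=\cL(x,0;y,1)$ from one landscape, so it also deviates the part on $[-1,0]$. Concretely:
\begin{itemize}
\item It plants with constant intensity $(1-\alpha)^2/4$ on all of $[-1,1]$ along the kinked path with slope $\alpha$ on $[-1,0]$ and slope $1$ on $[0,1]$; the variance bound makes this path a geodesic.
\item It uses the piecewise-quadratic terminal profile \eqref{eq:terminal profile}, not a linear one.
\item It perturbs by $\eta$ for uniqueness, dilates by $a>1$, and minimises the total cost $F(\alpha)$ at $\alpha^*=(7-2\sqrt6)/5$.
\end{itemize}
Keeping the Airy part undeviated is not fatal in itself. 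One can check the following alternative: plant along $u\mapsto\alpha+u$, $u\in[0,1]$, with time-dependent intensity $(1-\alpha)^2/(1+u)^2$, and take $w(y)=y^2/2$ on $[0,1+\alpha]$. This makes the path tie with all straight competitors from $(0,-1)$, and its total cost is $\frac12(1-\alpha)^3+\frac1{12}(1+\alpha)^3=F(\alpha)$. What fails is specifically a segment started at $(0,0)$ with constant intensity and a linear $w$.

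\textbf{Secondary issue: localisation.} The event controlling $H$ outside the box of size $Kr$ depends on $\mathcal S$ and $W$. It is therefore not independent of your neighbourhood event, so ``probability $1-o(1)$'' does not suffice. You need its complement to have probability at most $e^{-Cr^3}$ with $C$ larger than the rate. The cubic localisation bound \eqref{eq:starting-localization} provides exactly this once $K$ is large.
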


\begin{proof}
Let $\mathcal{A}_1$ be a standard parabolic Airy$_2$ process and let $S$ be
a standard Airy sheet. Let $W$ be a two-sided Brownian motion
with diffusion parameter $2$, and let $B$ and $R$ be a two-sided Brownian motion and a two-sided
Bessel-$3$ process respectively, both with diffusion parameter $1$.

By the coupling between the Airy sheet and the parabolic Airy$_2$ process, (see Section \ref{sec: prelim}), we can realise $\mathcal{A}_1$ and $\mathcal{S}$ using one directed landscape:
\[
\mathcal{A}_1(x)=\mathcal L(0,-1;x,0),
\qquad
\mathcal{S}(x, y)=\mathcal L(x,0;y,1).
\]
These fields are independent because they concern disjoint
temporal increments, see Definition \ref{def: dir land}. The term $-\mathcal{A}_1(0)$ does not affect the
maximizer. Thus $(X,Y)$ consists of the positions at times
$0$ and $1$ of the maximizing path from $(0,-1)$ to the
terminal profile $W$.

We start with a deterministic construction of an environment on $\R^2$ ($1+1$-dimensional spacetime) and associated `metric', with maximiser $(x, y)$ has displacement $y-x=1$. By slightly perturbing this environment, the maximiser can be made unique. We will then construct a terminal boundary function $b:\R\to \R$ that will preserve the above maximiser. Finally, by dilation, we will construct environments with unique maximisers having displacement at least $a$ for any $a>1$. This will allow us to consider environments in their vicinity in the local uniform topology of the space of directed metrics $\mathcal{E}$ (see Section \ref{sec: prelim}) and use the theory of large upper tail deviations for the directed landscape established in Theorem 1.1 of \cite{DDV24} to obtain the desired asymptotic lower bounds on the upper tails of $D$. 

In other words, recalling the notation from Section \ref{sec: prelim}, we will first look for an environment $e$ with piecewise linear $e$-geodesic $f:[-1, 1]\to \R$, satisfying $f(-1) = 0, f(1)-f(0) = 1$. This will be constructed from a planted network measure $\mu$ supported on the graph of $f$.

By \cite[Proposition 6.7]{DDV24}, for any planted network measure $\mu$, the length (recall from \eqref{eq: dir metric length}) of a path $\gamma\in H^1$ is given by $\ell_{e_\mu}(\gamma) = \mu(\mathfrak{g} \gamma)-\int^t_s |\gamma'|^2$. Thus, to show that $f$ is an $e_\mu$ geodesic, it suffices to show it is a maximiser of the variational problem
\[
\sup_\gamma \mu(\mathfrak{g} \gamma)-\int^t_s |\gamma'|^2\mathrm{d} u\,,\qquad \gamma \in H^1\,,\quad  \gamma(-1) = 0\,, \gamma(1) = f(1)\,.
\]
For more examples of similar variation problems, the interested reader can see Section 2.1 in \cite{DDV24}, wherein the large deviation theory of the directed geodesic is investigated.

Since we take $f$ to be piecewise linear, the conditions above imply that on $(0, 1)$, $f'=1$. Define the measure (singular with respect to the Lebesgue measure) $\mu$ on $\R^2$ by setting
\[
\mu(E) = c\int_{-1}^1
\mathbf 1_{\{(f(t),t)\in E\}}\,dt\,,\qquad E\subset\R^2 \text{ Borel}
\]
for some $c> 0$ to be determined later. Let $e_\mu : \R^4_\uparrow \to \R$ be the metric obtained by maximizing
\[
e_\mu(x, s; y, t) := \sup_\gamma \big(\mu(\mathfrak g\gamma)-\int_s^t|\gamma'(u)|^2\,du\big)
\]
over absolutely continuous paths $\gamma$ with the prescribed
endpoints $\gamma(s) = x, \gamma(t) = y$ and finite Dirichlet energy $\int|\gamma'|^2$, where
\[
\mathfrak g\gamma=\{(\gamma(u),u):u\in[s,t]\}
\]
denotes the graph of the path $\gamma$ in the plane. This is exactly the planted-metric construction in
\cite[equation (1.4) and Definition 6.1]{DDV24}, which we briefly touch upon in Section \ref{sec: prelim} (see Figure \ref{fig: planted metric} therein). For more constructions of planted metrics and their applications, see examples 2.2 and 2.3 in \cite{DDV24}. We will also call the term (suppressing dependence on $s, t$)
\[
J_\mu(\gamma):= \mu(\mathfrak g\gamma) -\int_s^t|\gamma'(u)|^2\,du
\]
the \emph{weight} accumulated by the geodesic from $s$ to $t$.

We now show that for any $-1 \le s < t \le 1$, $f$ indeed maximises the functional $J_\mu$ over paths with finite Dirichlet energy starting at $(f(s), s)$ and ending at $(f(t), t)$ for all $-1 \le s < t\le 1$. First observe that the accumulated weight of $f$ from $s$ to $t$ is at least
\[
\mu (\mathfrak{g}f|_{[s, t]}) - \int_s^t |f'(u)|^2\,du = \int_s^t\left(c-|f'(u)|^2\right)\,du
\ge
-\frac{(f(t)-f(s))^2}{t-s}\,,
\]
provided we take 
\[
c \ge \sup_{-1\le s < t \le 1}\operatorname{Var}_{[s,t]}(f')
\]
where $\operatorname{Var}_{[s,t]}$ denotes variance with respect
to normalized Lebesgue measure on $[s,t]$: for $s<t$,
\[
\operatorname{Var}_{[s,t]}(f')
:=
\frac{1}{t-s}\int_s^t |f'(u)|^2\,du
-
\left(\frac{1}{t-s}\int_s^t f'(u)\,du\right)^2.
\]
Equivalently, this is the variance of $f'(U)$ when
$U$ is uniform on $[s,t]$. By the choice of piecewise linear `planted' path $f$ with unit displacement on $[0, 1]$, on any interval $[s,t]\subset[-1,1]$, the derivative $f'$ of the linear parts
takes values in $\{\alpha,1\}$ for some $\alpha\in \R$. We thus have with
\[
p=\frac{\operatorname{Leb}\{u\in[s,t]:f'(u)=1\}}{t-s},
\]
the uniform in $s$ and $t$ bounds
\[
\operatorname{Var}_{[s,t]}(f')
=
p(1-p)\left(1-\alpha\right)^2
\le
\frac14\left(1-\alpha\right)^2
\]
and so we take $c = c_\alpha := (1-\alpha)^2/4$. Finally, any excursion away from $f$ receives no contribution from $\mu$ during that
excursion, and its weight is at most the negative Dirichlet
energy of the straight segment joining its endpoints (again by Cauchy-Schwarz).
Replacing each excursion by the corresponding subpath of
$f$ thus cannot decrease $J_\mu$.

Now, the aforementioned constraints we imposed on $f$ mean we can express all admissible $f$ as
\[
f(-1)=0,\qquad
f(0)=\alpha,\qquad
f(1)= 1+ \alpha,\qquad
f(1)-f(0)=1\,,
\]
\begin{equation}\label{eq: planted path}
f(t)=
\begin{cases}
\alpha(t+1),&-1\le t\le0,\\
\alpha + t,&0\le t\le1
\end{cases}
\end{equation}
and let $\mu$ be the planted network measure supported on the graph of $f$ with $c = c_\alpha:=(1-\alpha)^2/4$. The accumulated weight of $f$ is easily computed as
\begin{equation}\label{eq:cumulative weight}
w(t)=
\begin{cases}
 \big(\frac14 (1-\alpha)^2-\alpha^2 \big)(t+1),&-1\le t\le0,\\
 \frac14 (1-\alpha)^2(t+1)-\alpha^2-t,&0\le t\le1.
\end{cases}
\end{equation}

We will now construct a terminal profile $b: \R\to\R$ (crucially with finite Dirichlet energy on $\R$) so that $f$ is still an $e_\mu$-geodesic from $(0, -1)$ to $(b, 1)$. As before, to every path $\gamma$ with finite Dirichlet energy and $\gamma(-1) = 0$, we associate the \emph{weight}
\[
J_\mu(\gamma):= \mu(\mathfrak g\gamma)-\int_{-1}^1|\gamma'(u)|^2\,du+b(\gamma(1))
\]
(here we absorb in a slight abuse of notation the terminal value into the original definition).

Fix $s\in [-1, 1]$. Observe that any absolutely continuous path $\gamma: [-1, 1]\to \R$ with finite Dirichlet energy, boundary conditions $\gamma(-1) = f(-1) = 0$ and terminal value $\gamma(1) = y\neq f(1)$ has by continuity some final time $s\in [-1, 1)$ such that $\gamma(s) = f(s)$. Now, since $\gamma$ avoids $f$ on $(s, 1]$, we have $\mu(\mathfrak{g}\gamma|_{(s, 1]}) = 0$ and so the accumulated weight of $\gamma$ on $(s, 1]$ is bounded by 
\[
\mu(\mathfrak g \gamma|_{(s, 1]})-\int_s^1|\gamma'(u)|^2\,du = -\int_s^1|\gamma'(u)|^2\,du \le -\frac{1}{1-s}\left(\int^1_s \gamma'(u)\, \mathrm{d}u\right)^2 = -\frac{(y-f(s))^2}{1-s}
\]
where the last inequality follows by Cauchy-Schwarz, giving the desired estimate. Thus, by additivity of $J_\mu$ with respect to concatenation of paths, the total accumulated weight of $\gamma$ on $[-1, 1]$ is at most
\[
 w(s) -\frac{(y-f(s))^2}{1-s} + b(y)\,,
\]
since the weight accumulated by $\gamma$ up to time $s$ is at most $w(s)$,
because $f|_{[-1,s]}$ is an $e_\mu$-geodesic from $(0, -1)$ to $(f(s), s)$. For $f$, the accumulated weight $J_\mu(f)$ is $w(1) +b(1+\alpha)$. By \eqref{eq:cumulative weight}, we have $w(1) = -(1+\alpha)^2/2$ and $f(1) = \alpha + 1$.

To show $f$ maximises the functional $J_\mu$ over all $\gamma:[-1, 1] \to \R$ with finite Dirichlet energy and boundary condition $\gamma(-1) = 0$ (the case $\gamma(1) = f(1)$ being taken care of the fact that $f$ is an $e_\mu$ geodesic with $w(1) = -(1+\alpha)^2/2$), the previous observation means it will suffice to find a terminal profile $b$ such that $b(1+\alpha) = (1+\alpha)^2/2$ and
\begin{equation}\label{eq:terminal profile bound}
w(s)-\frac{(y-f(s))^2}{1-s}+b(y)\le0
\qquad
\text{for all }-1\le s<1,\ y\in\mathbb R\,.
\end{equation}

Notice that for $\alpha \in [1/3, 1)$ we have for all $s\in [-1, 1]$ the weight $w(s)$ of the geodesic $f$ from $(0, -1)$ to $(f(s), s)$, \eqref{eq:cumulative weight} is less than or equal to zero. Hence $b(y) = 0$ for $y\le 0$ satisfies \eqref{eq:terminal profile bound}. For $y > 0$, unravelling \eqref{eq:terminal profile bound}, we arrive at two bounds. For $s\le0$, with $h=1-s\in[1,2]$ it becomes
 \begin{align}\label{eq: upper bound 1}
b(y)&\le \frac{3\alpha^2+2\alpha-1}{4}(2-h)
+\frac{(y-\alpha(2-h))^2}{h}\\
&=
\frac{(y-2\alpha)^2}{h}
+\frac{(1-\alpha)^2}{4}h
+2\alpha y-2\alpha^2-\frac{(1-\alpha)^2}{2}:= A_\alpha(y, h)\,.
\end{align}
For $s\ge0$, with $h'=1-s\in(0,1]$, \eqref{eq:terminal profile bound} becomes
\begin{align}\label{eq: upper bound 2}
    b(y)\le \frac{(1+\alpha)^2}{2}
+2(y-1-\alpha)
+\frac{(y-1-\alpha)^2}{h'}
+\frac{(1-\alpha)^2}{4}h':=B_\alpha(y, h')\,.
\end{align}
Optimising the above right hand sides over arbitrary $h, h' > 0$
gives (keeping in mind the restriction $\alpha < 1$)
\[
h(y)=\frac{2|y-2\alpha|}{1-\alpha}\,,
\qquad
h'(y)=\frac{2|y-1-\alpha|}{1-\alpha}.
\]
We will not directly substitute the unconstrained minimisers in \eqref{eq: upper bound 1} and \eqref{eq: upper bound 2}. Instead, we will do some casework on $\alpha\in [1/3, 1]$ to find the minimisers $h, h'$ in $[1, 2]$ and $(0, 1]$ respectively. 

Fix $1/3\le\alpha<1$. For $0\le y\le1+\alpha$,
the minimizer in the right hand side of \eqref{eq: upper bound 1} over $h\in[1,2]$ is
\[
h(y)=
\begin{cases}
2,
&0\le y\le3\alpha-1,\\[4pt]
\dfrac{2(2\alpha-y)}{1-\alpha},
&3\alpha-1\le y\le\dfrac{5\alpha-1}{2},\\[8pt]
1,
&\dfrac{5\alpha-1}{2}\le y\le\dfrac{3\alpha+1}{2},\\[8pt]
\dfrac{2(y-2\alpha)}{1-\alpha},
&\dfrac{3\alpha+1}{2}\le y\le1+\alpha.
\end{cases}
\]
Substituting these minimizing values into $A_\alpha(y,h)$ gives
\begin{align}\label{eq: A minimal}
    \min_{1\le h\le2}A_\alpha(y,h)=
\begin{cases}
\dfrac{y^2}{2},
&0\le y\le3\alpha-1,\\[6pt]
(3\alpha-1)y-\dfrac{(3\alpha-1)^2}{2},
&3\alpha-1\le y\le\dfrac{5\alpha-1}{2},\\[8pt]
\dfrac{3\alpha^2+2\alpha-1}{4}+(y-\alpha)^2,
&\dfrac{5\alpha-1}{2}\le y\le\dfrac{3\alpha+1}{2},\\[8pt]
(1+\alpha)y-\dfrac{(1+\alpha)^2}{2},
&\dfrac{3\alpha+1}{2}\le y\le1+\alpha.
\end{cases}
\end{align}
For $\alpha=1$ (which incidentally corresponds to $e_\mu$ being the Dirichlet metric from Section \ref{sec: prelim}), direct minimization gives
\[
\min_{1\le h\le2}A_1(y,h)=\frac{y^2}{2},
\qquad 0\le y\le2,
\]
consistent with \eqref{eq: A minimal} after omitting
the zero-length intervals.

Now, we check that the (optimised in $h'$) values $B_\alpha$ in the right hand side of \eqref{eq: upper bound 2} are at least those obtained for $A_\alpha$. For $\alpha \in [1/3, 1)$ observe that if $y\le(3\alpha+1)/2$, then $h'\ge1$, so the
minimum over $(0,1]$ in \eqref{eq: upper bound 2} occurs at $h'(y)=1$. If $(3\alpha+1)/2<y<1+\alpha$, then $h_*\in(0,1)$,
and
\[
\frac{(y-1-\alpha)^2}{h'}
+\frac{(1-\alpha)^2}{4}h'
=
(1-\alpha)(1+\alpha-y)\,.
\]
At $y=1+\alpha$, the infimum is approached as
$h\downarrow0$. When $\alpha=1$, the upper bound in \eqref{eq: upper bound 2} is directly obtained as $1+(y-1)^2$ for $0\le y \le 2$. Therefore, we have for $\alpha \in [1/3, 1)$,
\[
\inf_{0<h\le1}B_\alpha(y,h)
=
\begin{cases}
\dfrac{3\alpha^2+2\alpha-1}{4}+(y-\alpha)^2,
&0\le y\le\dfrac{3\alpha+1}{2},\\[8pt]
(1+\alpha)y-\dfrac{(1+\alpha)^2}{2},
&\dfrac{3\alpha+1}{2}\le y\le1+\alpha\,,
\end{cases}
\]
which by inspection is seen to be at least the minimal values attained by $A_\alpha$ in \eqref{eq: A minimal}. For $\alpha=1$, direct minimization gives
\[
\inf_{0<h\le1}B_1(y,h)=1+(y-1)^2,
\qquad 0\le y\le2\,,
\]
which is seen to be at least $\min_{1\le h\le2}A_1(y,h)$ for $0\le y \le 2$. For $y\ge 1+\alpha$, the bounds above are at least $(1+\alpha)^2/2$. Thus, piecing all of the above together, the following continuous, non-decreasing terminal profile
\begin{equation}\label{eq:terminal profile}
b(y)=
\begin{cases}
0,
&y\le0,\\[2pt]
\dfrac{y^2}{2},
&0\le y\le3\alpha-1,\\[6pt]
(3\alpha-1)y-\dfrac{(3\alpha-1)^2}{2},
&3\alpha-1\le y\le\dfrac{5\alpha-1}{2},\\[8pt]
\dfrac{3\alpha^2+2\alpha-1}{4}+(y-\alpha)^2,
&\dfrac{5\alpha-1}{2}\le y\le\dfrac{3\alpha+1}{2},\\[8pt]
(1+\alpha)y-\dfrac{(1+\alpha)^2}{2},
&\dfrac{3\alpha+1}{2}\le y\le1+\alpha,\\[8pt]
\dfrac{(1+\alpha)^2}{2},
&y\ge1+\alpha.
\end{cases}
\end{equation}

We now make a slight perturbation to the environment $e_\mu$, while still preserving the geodesic property of $f$, enforcing that it is the \emph{unique} path attaining maximal weight with the prescribed boundary conditions.

Fix $\eta > 0$. Change the definition of $\mu$ by a constant factor so that for any Borel measurable subset $E$ of $\R^2$
\[
\mu_{c_\alpha+\eta}(E) : =
\left(\frac{(1-\alpha)^2}{4}+\eta\right)\int_{-1}^1
\mathbf 1_{\{(f(t),t)\in E\}}\,dt\,,
\]
and consider the environment $e_{c_\alpha+\eta} := e_{\mu_{c_\alpha+\eta}}$ with associated weight function $J_{c_\alpha+\eta}$. For finite Dirichlet energy path $\gamma$ with $\gamma(-1) = 0$, the weight now becomes
\[
J_{c_\alpha+\eta}(\gamma)
=
J_{c_\alpha}(\gamma)
+
\eta\,\operatorname{Leb}(C_\gamma)\,,
\]
where $C_\gamma = \{s\in [-1, 1] \gamma(s) = f(s)\}$. Since $C_f=[-1,1]$, we have $J_{c_\alpha+\eta}(f)=2\eta$. If $\gamma\ne f$, continuity implies that $\gamma$ and $f$ differ on a subinterval of positive length. Hence $\operatorname{Leb}(C_\gamma)<2$, and therefore (using the already established bound $J_{c_\alpha}(\gamma) \le 0$)
\[
J_{c_\alpha+\eta}(\gamma) = J_{c_\alpha}(\gamma)+\eta \operatorname{Leb}(C_\gamma)
\le
\eta\,\operatorname{Leb}(C_\gamma)
<
2\eta
=
J_{c_\alpha+\eta}(f).
\]
Thus for each fixed $\eta>0$, the preceding argument establishes
that $f$ is the unique $J_{c_\alpha + \eta}$-maximizing path for the perturbed
metric $e_{c_\alpha+\eta}$. Since the rate function of $e_{c_\alpha+\eta}$ is finite (see Section \ref{sec: prelim}), Definition 4.1 and Lemma 6.8 in \cite{DDV24}, the \emph{composition law} for $e_{c_\alpha+\eta}$ is satisfied giving
\begin{align*}
    e_{c_\alpha+\eta}(0, -1; y, 1) =  \max_{x \in \R}e_{c_\alpha+\eta}(0, -1; x, 0)+e_{c_\alpha+\eta}(x, 0; y, 1)\,,\qquad y \in \R\,.
\end{align*}
We thus have
\begin{align*}
    &\max_{x, y \in \R}e_{c_\alpha+\eta}(0, -1; x, 0)+e_{c_\alpha+\eta}(x, 0; y, 1)+b(y)\\
    = &\max_{\gamma, \tilde{\gamma}}-\int_{-1}^0|\gamma'(u)|^2\,du-\int_{0}^1|\tilde{\gamma}'(u)|^2\,du +\mu(\mathfrak g\gamma)+\mu(\mathfrak g\tilde{\gamma})+b(\tilde{\gamma}(1))\,,
\end{align*}
where the supremum is over paths $\gamma, \tilde{\gamma}$ in $H^1$ with $\gamma(-1) = 0, \gamma(0) = \tilde{\gamma}(0)$. By additivity of $\mu$ and the fact that concatenation of such paths $\gamma, \tilde{\gamma}$ is again a path $\pi$ in $H^1$ with $\pi(-1) = 0$, we obtain by the previous considerations that
\begin{align*}
    &\max_{x, y \in \R}e_{c_\alpha+\eta}(0, -1; x, 0)+e_{c_\alpha+\eta}(x, 0; y, 1)+b(y) = \max_{\gamma}J_{c_\alpha+\eta}(\gamma)\le J_{c_\alpha+\eta}(f)\,.
\end{align*}
We thus have 
\[
(f(0), f(1)) = \argmax_{x, y\in \R} e_{c_\alpha+\eta}(0, -1; x, 0)+e_{c_\alpha+\eta}(x, 0; y, 1)+b(y)
\]
noting that $f(1) -f(0) = 1+\alpha - \alpha = 1$.

We will now dilate the above construction by a scale parameter $a> 1$ so enforce a displacement of the maximiser that is \emph{strictly} greater one. This will allow us to find a neighbourhood of environments with such a displacement above one, and then apply the large deviation lower bound from \cite[Theorem 1.1]{DDV24}.

Fix $a>1$. Dilate the construction above spatially by
$a$, setting
\[
f_a(t)=af(t),
\qquad
b_a(y)=a^2b(y/a),
\]
and define
\[
\mu_{a,\eta}(E)
=
a^2\left(\frac{(1-\alpha)^2}{4}+\eta\right)
\int_{-1}^1
\mathbf 1_{\{(f_a(t),t)\in E\}}\,dt.
\]
This measure is supported on the graph of one absolutely
continuous path and has finite Kruzhkov entropy (see the introduction of \cite{DDV24} and Section \ref{sec: prelim} herein), so it is a planted
network measure in the sense of \cite[Section 1.1]{DDV24}.
Let $e_{a,\eta}=e_{\mu_{a,\eta}}$, suppressing the dependence on $\alpha$ for ease of notation. 

\underline{Claim}: With metric $e_{a, \eta}$ as above, $f_a$ is so that
\[
(a\alpha, a(1+\alpha)) = (f_a(0), f_a(1)) = \argmax_{x, y\in \R} e_{a, \eta}(0, -1; x, 0)+e_{a, \eta}(x, 0; y, 1)+b_a(y)
\]
with the pair $(a\alpha, a(1+\alpha))$ being \emph{unique}.

To see this, let $\gamma:[-1,1]\to\mathbb R$ be any
absolutely continuous path with finite Dirichlet energy and $\gamma(-1)=0$.
Set $\widetilde\gamma(t) = \gamma(t)/a$. Its weight in the dilated, perturbed environment is
\[
J_{a,\eta}(\gamma)
=
-\int_{-1}^1|\gamma'(t)|^2\,dt
+
a^2\left(\frac{(1-\alpha)^2}{4}+\eta\right)
\int_{-1}^1
\mathbf 1_{\{\gamma(t)=f_a(t)\}}\,dt
+
b_a(\gamma(1)).
\]
Since $f_a=af$ and $b_a(y)=a^2b(y/a)$, we have
\[
|\gamma'|^2=a^2|\widetilde\gamma'|^2,
\qquad
\{\gamma(t)=f_a(t)\}
=
\{\widetilde\gamma(t)=f(t)\},
\qquad
b_a(\gamma(1))=a^2b(\widetilde\gamma(1)).
\]
Therefore $J_{a,\eta}(\gamma)
=
a^2J_{c_\alpha+\eta}(\widetilde\gamma)$,
where $J_{c_\alpha+\eta}$ is the weight in the undilated, perturbed environment.

The map $\gamma\mapsto\widetilde\gamma=\gamma/a$ is clearly a
bijection between finite Dirichlet energy paths on $[-1, 1]$ vanishing at $-1$. Since $f$ is the unique $e_{c_\alpha+\eta}$-geodesic with endpoints $(0, -1)$ and $(f(1), 1)$,  $f_a=af$ is the unique path maximising $J_{a, \eta}$ and so by Proposition 6.9 in \cite{DDV24}, $f_a$ is the unique $e_{a, \eta}$-geodesic from $(0, -1)$ to $(af(1), 1)$. Its displacement on $[0,1]$ is
\[
f_a(1)-f_a(0)
=
a\left(1+\alpha - \alpha\right)
=a>1.
\]
Arguing exactly as we did for the metric $e_{c_\alpha+\eta}$, we conclude the proof of the claim.

We are now in a position to estimate from below the upper tail events $\{D > r\}$ with $D = Y-X$ the displacement of the maximisers $X, Y$. First, observe that we can express
\[
\{D> r\} = \left\{\max_{x, y \in \R}\mathcal{L}(0, -1; x, 0) + \mathcal{L}(x, 0; y, 1)+ W(y) > \max_{\substack{x, y \in \R\\y-x \le r}}\mathcal{L}(0, -1; x, 0) + \mathcal{L}(x, 0; y, 1)+ W(y)\right\}\,.
\]
By invariance under diffusive scaling, we have $W^{(r)}(y):= r^{-2}W(ry)$ is equal in law to $r^{-\frac{3}{2}}W(y)$ on $\mathcal{C}$. Now, with $\mathcal L^{(r)}(x,s;y,t) = r^{-2}\mathcal L(rx,s;ry,t)$, we can succinctly re-express
\begin{align*}
    \{D>r\} =\left\{
\begin{aligned}
&\max_{x,y\in \R}
\mathcal L^{(r)}(0,-1;x,0)
+\mathcal L^{(r)}(x,0;y,1)
+W^{(r)}(y)\\
&\qquad>
\max_{\substack{x,y\in \R\\y-x\le1}}
\mathcal L^{(r)}(0,-1;x,0)
+\mathcal L^{(r)}(x,0;y,1)
+W^{(r)}(y)
\end{aligned}
\right\}\,.
\end{align*}
Moreover, for any $K>0$ we can section on the event the rescaled maximisers $(X/r, Y/r)$ are located in the box $Q_K = [-K, K ]^2$. We can thus express on the event $\{|X|\lor |Y|\le rK\}$, 
\begin{align}
    \{D>r\} =\left\{
\begin{aligned}\label{eq: maximiser loc}
&\max_{(x,y)\in Q_K}
\mathcal L^{(r)}(0,-1;x,0)
+\mathcal L^{(r)}(x,0;y,1)
+W^{(r)}(y)\\
&\qquad>
\max_{\substack{(x,y)\in Q_K\\y-x\le1}}
\mathcal L^{(r)}(0,-1;x,0)
+\mathcal L^{(r)}(x,0;y,1)
+W^{(r)}(y)
\end{aligned}
\right\}\,.
\end{align}

Now, by \cite[Theorem 1.1]{DDV24} and Schilder's theorem respectively, the families $\mathcal L^{(r)}$ and $r^{-\frac{3}{2}}W(y)$ satisfy large deviation principles with speed $r^{3}$ (with good rate functions which we will soon compute). Having localised the maximiser by sectioning with $\{|X|\lor |Y|\le rK\}$, we are in good shape to apply large deviation theory on neighbourhoods of the dilated and perturbed environment $e_{a, \eta}$ with boundary term $b_a$, in the respective uniform topologies of directed metrics and continuous functions.

Choose $K>a(1+\alpha)$ and define the deterministic environment
\[
F_{a,\eta}(x,y)=e_{a,\eta}(0,-1;x,0)+e_{a,\eta}(x,0;y,1)+b_a(y)\,,\qquad x, y\in \R\,.
\]
By the preceding discussion, we have that its unique maximiser is $p_*=\left(a\alpha, a(1+\alpha)\right)$. Since the displacement at $p_*$ is $a>1$, the continuity of the environment $F_{a, \eta}$ and standard compactness argument imply
\[
g_K
:=
F_{a,\eta}(p_*)
-
\sup_{\substack{(x,y)\in Q_K\\y-x\le1}}
F_{a,\eta}(x,y)
>0.
\]
Choose $0<\e<g_K/6$. Let $U_\e$ be the set of metrics $e$ satisfying
\[
\sup_{|x|\le K}
\left|
e(0,-1;x,0)-e_{a,\eta}(0,-1;x,0)
\right|<\e\qquad \text{and} \qquad \sup_{(x,y)\in Q_K}
\left|
e(x,0;y,1)-e_{a,\eta}(x,0;y,1)
\right|<\e\,.
\]
By definition, $U_\e$ is an open neighbourhood of $e_{a,\eta}$ in $\mathcal{E}$ for the topology of uniform convergence on bounded sets used in \cite{DDV24}.
Moreover, denote the uniform neighbourhood of $b_a$ of diameter $\e$ by
\[
V_\e
=
\left\{
h\in \mathcal{C}[-K,K]:
\sup_{|y|\le K}|h(y)-b_a(y)|<\e
\right\}.
\]
For $e\in U_\e$ and $h\in V_\e$, the environment
\[
(x,y)\longmapsto
e(0,-1;x,0)+e(x,0;y,1)+h(y)
\]
differs from $F_{a,\eta}$ by less than $3\e$ on $Q_K$.
Since $6\e<g_K$, every maximizer over $Q_K$ has
displacement greater than $1$.

The lower bound in \cite[Theorem 1.1]{DDV24} thus gives
\[
\liminf_{r\to\infty}r^{-3}
\log\mathbb P(\mathcal L^{(r)}\in U_\e)
\ge
-\inf_{e\in U_\e}I(e)
\ge-I(e_{a,\eta})\,,
\]
where $I(\cdot)$ is the rate-function description in \cite[Theorem 1.1, Section 1.1]{DDV24} (see Section \ref{sec: prelim},
and is given by
\[
I(e_{a, \eta})=\frac43\int\rho_{a, \eta}(t)^{3/2}\,dt\,,
\]
where $\rho_{a, \eta}$ is the Lebesgue density of the time marginal of $\mu_{a, \eta}$ (in fact one can view $\mu_{a, \eta}$ as the pushforward of the measure on $\R$ with Lebesgue density $\rho_{a, \eta}$ under the graph of $f_a$). The landscape rate for the perturbed and dilated metric $e_{a, \eta}$ is thus computed as
\[
I(e_{a,\eta})
=
\frac43\int_{-1}^1
\left[
a^2\left(\frac{(1-\alpha)^2}{4}+\eta\right)
\right]^{3/2}\,dt
=
a^3\frac83\left(\frac{(1-\alpha)^2}{4}+\eta\right)^{3/2}\,.
\]

Schilder's theorem applied to the independent $W|_{[-K, 0]}$ and $W|_{[0, K]}$ separately, splitting the neighbourhood $V_\e$ into $\mathcal{C}[-K, 0]$ and $\mathcal{C}[0, K]$ neighbourhoods gives (taking into account that $W$ is a rate two Brownian motion and noting that $b_a(0) = 0$)
\[
\liminf_{r\to\infty}r^{-3}
\log\mathbb P(W^{(r)}|_{[-K,K]}\in V_\e)
\ge
-\frac14\int_{-K}^K|b_a'(y)|^2\,dy\,.
\]
A quick computation gives the rate function with $b_a$ the dilated version of \eqref{eq:terminal profile} as (using that $[-K, K]$ contains the support of $b_a'$)
\begin{align*}
\frac14\int_{-K}^K|b_a'(y)|^2\,dy&=\frac{a^3}{4}\int_{\mathbb R}|b'(y)|^2\,dy\\
&=\frac{a^3}{4}\bigg[
\int_0^{3\alpha-1}y^2\,dy+
\int_{3\alpha-1}^{(5\alpha-1)/2}
(3\alpha-1)^2\,dy\\
&+
\int_{(5\alpha-1)/2}^{(3\alpha+1)/2}
4(y-\alpha)^2\,dy+
\int_{(3\alpha+1)/2}^{1+\alpha}
(1+\alpha)^2\,dy
\bigg]\\
&=
a^3\left(\frac{(3\alpha-1)^3+(1+\alpha)^3}{24}+
\frac{1-\alpha}{8}
\big((3\alpha-1)^2+(1+\alpha)^2\big)\right)\\
&=
a^3\left(\frac14-\frac{\alpha}{4}
+\frac{3\alpha^2}{4}-\frac{\alpha^3}{12}\right)\,.
\end{align*}

By independence, the event $E_r
:=
\{\mathcal L^{(r)}\in U_\e\}
\cap
\{W^{(r)}|_{[-K,K]}\in V_\e\}$ satisfies the asymptotic lower bound
\begin{equation}\label{eq:local-event-lower}
\liminf_{r\to\infty}r^{-3}\log\mathbb P(E_r)\ge-J_{a,\eta}\,,
\end{equation}
with
\begin{equation}\label{eq:total-perturbed-rate}
J_{a,\eta}
=
a^3\left[
\frac83\left(\frac{(1-\alpha)^2}{4}+\eta\right)^{3/2}
+\frac14-\frac{\alpha}{4}
+\frac{3\alpha^2}{4}-\frac{\alpha^3}{12}
\right]\,.
\end{equation}

By \eqref{eq: maximiser loc} and the above, we have the inclusion $E_r \cap \{|X|\lor |Y|\le rK\}\subseteq \{D> r\}$. We thus estimate
\begin{align}\label{eq: D lower bound}
    \mathbb \PP(D>r) \ge \mathbb \PP(E_r)-\PP(|X|\lor |Y|\ge rK)\,.
\end{align}
The event $\PP(|X|\lor |Y|\ge rK)$ is \emph{negligible} in comparison to $\PP(E_r)$ for $K $sufficiently large. Indeed, arguing as in the proof of Lemma \ref{lemma: geod endpts tight}, replacing $f_\e$ therein with the Airy$_2$ process and estimating the quadratic term 
\[
x^2 + (x-y)^2 \ge c(x^2+y^2)\,,\qquad x, y \in \R\,,
\]
for some universal constant $c> 0$, we obtain the estimates (crucially note the \emph{cubic} decay)
\begin{equation}\label{eq:starting-localization}
\mathbb P(|X|\vee|Y|>u)\le Ce^{-cu^3},
\qquad u\ge1,
\end{equation}
for some universal constant $C>0$. By \eqref{eq:starting-localization}, we now have
\[
\mathbb \PP(D>r)
\ge
\mathbb \PP(E_r)-Ce^{-cK^3 r^3}\,.
\]
Choose $K$ sufficiently large that $cK^3>J_{a,\eta}$. Taking $r\to \infty$ and using \eqref{eq:local-event-lower}, we obtain for all $\alpha \in [1/3, 1]$,
\[
\liminf_{r\to\infty}r^{-3}\log\mathbb P(D>r)
\ge
-a^3\left[
\frac83\left(\frac{(1-\alpha)^2}{4}+\eta\right)^{3/2}
+\frac14-\frac{\alpha}{4}
+\frac{3\alpha^2}{4}-\frac{\alpha^3}{12}
\right].
\]
Letting $\eta\downarrow0$ and then $a\downarrow1$ gives
\[
\liminf_{r\to\infty}r^{-3}\log\mathbb P(D>r)
\ge
-\left(
\frac7{12}-\frac{5\alpha}{4}
+\frac{7\alpha^2}{4}-\frac{5\alpha^3}{12}
\right)\,.
\]
Define
\[
F(\alpha)
=
\frac7{12}-\frac{5\alpha}{4}
+\frac{7\alpha^2}{4}-\frac{5\alpha^3}{12},
\qquad \alpha\in[1/3,1].
\]
Differentiating gives
\[
F'(\alpha)=\frac{-5+14\alpha-5\alpha^2}{4},
\qquad
F''(\alpha)=\frac{7-5\alpha}{2}>0
\quad\text{on }[1/3,1].
\]
Thus $F$ is strictly convex on this interval. Solving $F'(\alpha)=0$ yields
\[
5\alpha^2-14\alpha+5=0,
\qquad
\alpha=\frac{7\pm2\sqrt6}{5}.
\]
Only $\alpha^* := (7-2\sqrt6)/5$ belongs to $[1/3,1]$, so it minimises $F$ on $[1/3, 1]$ by convexity. Substituting back into $F$ gives
\[
\min_{\alpha\in[1/3,1]}F(\alpha)
=
F(\alpha_*)
=
\frac16 (1+\alpha_*)^2
=
\frac{28-8\sqrt6}{25}
\]
We thus obtain
\[
\liminf_{r\to\infty}r^{-3}\log\mathbb P(D>r)
\ge - F(\alpha^*) = -\frac{28-8\sqrt6}{25}.
\]
proves
\eqref{eq:starting-tail-lower}.
\end{proof}

\begin{corollary}\label{cor: distinguishing-observable}
Under the assumptions of
Theorem~\ref{thm: distinct-displacement-laws}, define
\[
\phi_r(z)=\min\{1,(|z|-r)_+\}.
\]
For all sufficiently large $r$,
\[
\mathbb E\phi_r(D)>\mathbb E\phi_r(D_1).
\]
\end{corollary}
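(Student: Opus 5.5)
The plan is to compare the two upper tails from Theorem \ref{thm: distinct-displacement-laws} directly, using that $\phi_r$ is sandwiched between indicators. Since $0\le\phi_r\le1$, $\phi_r(z)=0$ for $|z|\le r$, and $\phi_r(z)=1$ for $|z|\ge r+1$, we have
\[
\mathbb P(|D|\ge r+1)\le \mathbb E\phi_r(D)\,,\qquad \mathbb E\phi_r(D_1)\le \mathbb P(|D_1|>r)\,.
\]
So it suffices to show that for all large $r$,
\[
\mathbb P(D>r+1)>\mathbb P(|D_1|>r)\,.
\]

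For the lower bound I apply \eqref{eq:starting-tail-lower} with $r+1$ in place of $r$. For any $\delta>0$ and all large $r$, this gives
\[
\mathbb P(D>r+1)\ge \exp\bigl(-(\kappa_0+\delta)(r+1)^3\bigr)\,,\qquad \kappa_0:=\tfrac{28-8\sqrt6}{25}\,.
\]
For the upper bound, \eqref{eq:bulk-tail-upper} gives
\[
\mathbb P(|D_1|>r)\le \exp\bigl(-(\tfrac5{12}-\delta)r^3\bigr)\,.
\]
Since $(r+1)^3/r^3\to1$, the desired strict inequality follows for large $r$ as soon as $\kappa_0+\delta<\tfrac5{12}-\delta$, that is, once $\delta$ is chosen below $(\tfrac5{12}-\kappa_0)/2$.

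The only real point, and the main potential obstacle, is the numerical comparison $\kappa_0<5/12$. I check it as follows: $\sqrt6>2.449$, so $\kappa_0<(28-19.59)/25=0.3364<0.4166\ldots=5/12$. To be careful, I will certify $\sqrt6>2.449$ by $2.449^2=5.997601<6$. This yields
\[
\kappa_0<\frac{28-8(2.449)}{25}=\frac{8.408}{25}<\frac{5}{12}\,.
\]

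With $\delta$ fixed as above, the exponent gap $(\tfrac5{12}-\kappa_0-2\delta)r^3$ absorbs the lower-order correction $(\kappa_0+\delta)((r+1)^3-r^3)=O(r^2)$. This proves $\mathbb E\phi_r(D)>\mathbb E\phi_r(D_1)$ for all sufficiently large $r$.

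Finally, I note that $\phi_r$ is bounded and continuous, so it is an admissible test function in the proof of Theorem \ref{thm: sing}.
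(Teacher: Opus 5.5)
Your proposal is correct and follows the paper's proof: you bound $\mathbb E\phi_r(D)$ below by $\mathbb P(D>r+1)$ and $\mathbb E\phi_r(D_1)$ above by $\mathbb P(|D_1|>r)$, then compare the cubic exponents in \eqref{eq:starting-tail-lower} and \eqref{eq:bulk-tail-upper} using $\frac{28-8\sqrt6}{25}<\frac5{12}$. Your explicit handling of the $(r+1)^3$ versus $r^3$ shift via a small $\delta$, and your numerical check that $\sqrt6>2.449$, only spell out details the paper leaves implicit.
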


\begin{proof}
The function $\phi_r$ is bounded and continuous, and
\[
\mathbb E\phi_r(D)\ge\mathbb P(D>r+1),
\qquad
\mathbb E\phi_r(D_1)\le\mathbb P(|D_1|>r).
\]
The conclusion follows from
\eqref{eq:starting-tail-lower} and \eqref{eq:bulk-tail-upper},
since
\[
\frac5{12}-\frac{28-8\sqrt6}{25}=\frac{125+96\sqrt{6}-336}{300}>0.
\]
\end{proof}

\section{Appendix}

The following lemma, establishes asymptotic exponential moment bounds for a random variables with prescribed stretched exponential upper tail estimates, ussed in the proof of Lemma \ref{lemma: brown-bessel upper bound}.

\begin{lemma}\label{lemma: Z exp moment bound}
    Fix $\kappa \in (0, \infty)$. Suppose $Z$ is a real-valued random variable with upper tail estimates
    \[
        \mathbb P(Z>h)\le C_\kappa e^{-\kappa h^{3/2}},
        \qquad h\ge0.
    \]
    Then, we have the asymptotic exponential moment bounds,
    \[
    \limsup_{\lambda\to\infty}
    \lambda^{-3}\log\mathbb E e^{\lambda Z}
    \le
    \frac{4}{27\kappa^2}.
    \]
\end{lemma}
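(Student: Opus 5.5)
Since $\PP(Z>h)\le C_\kappa e^{-\kappa h^{3/2}}$ for $h\ge 0$, the plan is to write the exponential moment through the layer-cake formula and bound the resulting integral by a Laplace-type argument. For $\lambda>0$, split according to the sign of $Z$: on $\{Z\le 0\}$ we have $e^{\lambda Z}\le 1$, while on $\{Z>0\}$
\[
\mathbb E\big[e^{\lambda Z}\mathbf 1_{\{Z>0\}}\big]\le 1+\int_0^\infty \lambda e^{\lambda h}\,\PP(Z>h)\,dh .
\]
Together these give
\[
\mathbb E e^{\lambda Z}\le 2+C_\kappa\lambda\int_0^\infty e^{\lambda h-\kappa h^{3/2}}\,dh .
\]
No lower-tail assumption is needed, since only upper bounds are required.

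Next, maximise the exponent $\psi(h)=\lambda h-\kappa h^{3/2}$. From $\psi'(h)=\lambda-\tfrac32\kappa h^{1/2}=0$ we get $h_*=(2\lambda/(3\kappa))^2$. Substituting gives $\psi(h_*)=\lambda h_*-\kappa h_*^{3/2}=\lambda^3\big(\tfrac{4}{9\kappa^2}-\tfrac{8}{27\kappa^2}\big)=\tfrac{4\lambda^3}{27\kappa^2}$.

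To control the integral by its maximum times a polynomial factor, split the range at $2h_*$. On $[0,2h_*]$, bound the integrand by $e^{\psi(h_*)}$, which contributes at most $2h_*e^{\psi(h_*)}=O(\lambda^2)e^{4\lambda^3/(27\kappa^2)}$. For $h\ge 2h_*$ we have $\kappa h^{1/2}\ge \kappa\sqrt2\,\tfrac{2\lambda}{3\kappa}>\lambda$, so $\psi(h)\le \lambda h-\kappa h^{3/2}\le -c\,h^{3/2}$ for some $c>0$ after adjusting. Thus the tail is bounded by a constant: more precisely, $\psi(h)\le -\kappa h^{3/2}(1-1/\sqrt2\cdot\ldots)$, and in any case $\psi$ is decreasing and eventually negative, so that portion is $O(1)$ times $e^{\psi(h_*)}$ at worst.

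Hence $\mathbb E e^{\lambda Z}\le 2+C_\kappa\,O(\lambda^3)\,e^{4\lambda^3/(27\kappa^2)}$. Taking logarithms, dividing by $\lambda^3$, and letting $\lambda\to\infty$, the polynomial prefactors and the constant $2$ contribute nothing, which gives the claimed bound $4/(27\kappa^2)$.

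The only mildly delicate step is the tail region $h\ge 2h_*$, where one must check that $\psi$ is genuinely decreasing there. This is straightforward: $\psi'(h)<0$ for $h>h_*$, and $\psi(h)\to-\infty$ superlinearly.
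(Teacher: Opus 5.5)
Your strategy matches the paper's: a Tonelli/layer-cake bound, then a Laplace-type split of $\int_0^\infty e^{\lambda h-\kappa h^{3/2}}\,dh$ into a bounded window, where the integrand is at most its maximum, and a tail where the exponent is $\le -c\,h^{3/2}$. The paper just substitutes $h=\lambda^2 v$ first. Your constant $2$ in place of $1$ is harmless, and $\psi(h_*)=4\lambda^3/(27\kappa^2)$ is correct.

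However, the tail step, which you flag as the delicate one, is wrong as written. For $h\ge 2h_*$ you only get $\kappa h^{1/2}\ge \frac{2\sqrt2}{3}\lambda$, and $\frac{2\sqrt2}{3}\approx 0.94<1$. In fact $\psi(h)=h(\lambda-\kappa h^{1/2})$ vanishes only at $h=\lambda^2/\kappa^2=\frac94 h_*$. Moreover $\psi(2h_*)=\frac{8(3-2\sqrt2)}{27}\frac{\lambda^3}{\kappa^2}>0$, so no $c>0$ gives $\psi(h)\le -c\,h^{3/2}$ on $[2h_*,\infty)$. The fallback (``$\psi$ is decreasing and eventually negative'') does not repair this. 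It only gives pointwise bounds $e^{\psi(h_*)}$ or $1$ on an infinite interval, which do not control the integral. What is needed is an integrable decay bound whose onset is located quantitatively in $\lambda$, so that the resulting constant does not grow with $\lambda$.

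The fix is to split further out, past the zero of $\psi$, for example at $h_1=4\lambda^2/\kappa^2=9h_*$. For $h\ge h_1$ you have $\kappa h^{1/2}\ge 2\lambda$, so $\psi(h)\le -\frac{\kappa}{2}h^{3/2}$. Hence $\int_{h_1}^\infty e^{\psi(h)}\,dh\le \int_0^\infty e^{-\kappa h^{3/2}/2}\,dh$, a constant independent of $\lambda$. On $[0,h_1]$ the integrand is at most $e^{\psi(h_*)}$, contributing $h_1 e^{\psi(h_*)}=\frac{4\lambda^2}{\kappa^2}e^{4\lambda^3/(27\kappa^2)}$. This is exactly the paper's choice $V=1\vee 4/\kappa^2$ in the variable $v=h/\lambda^2$; the extra $1\vee$ there only serves to pass from $v^{3/2}$ to $v$ for an explicit integral. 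With this correction, your bound $\mathbb E e^{\lambda Z}\le 2+C_\kappa\,O(\lambda^3)\,e^{4\lambda^3/(27\kappa^2)}$ and the final limit go through.
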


\begin{proof}
Fix $\lambda > 0$. Tonelli's theorem gives the bounds
    \[
\mathbb E e^{\lambda Z} \le 1+C_\kappa\lambda \int_0^\infty e^{\lambda h-\kappa h^{3/2}}\,\mathrm{d}h.
\]
The change of variables $h=\lambda^2v$ and the above estimates give
\begin{equation}\label{eq: integral bound exp moment}
\mathbb E e^{\lambda Z}
\le
1+C_\kappa\lambda^3
\int_0^\infty \mathrm{e}^{\lambda^3(v-\kappa v^{3/2})}\mathrm{d}v.
\end{equation}
We now estimate the integral in \eqref{eq: integral bound exp moment}. We will henceforth enlarge or reduce any constants as needed. Consider the function $f(v)=v-\kappa v^{3/2}$. For $v>0$, its derivative $f'(v)=1-\frac{3\kappa}{2}\sqrt v$. Thus $f$ increases up to $v_*=\frac{4}{9\kappa^2}
$ and decreases thereafter. Its maximum is
\[
m:=f(v_*)
=
\frac{4}{9\kappa^2}
-\kappa\frac{8}{27\kappa^3}
=
\frac{4}{27\kappa^2}>0\,.
\]
Set $V= 1 \lor (4/\kappa^2)$. On $[0,V]$, we have $f(v)\le m$, so we estimate
\[
\int_0^V e^{\lambda^3f(v)}\,\mathrm{d}v
\le V e^{m\lambda^3}.
\]
For $v\in [V, \infty)$ the inequality
$\sqrt v\ge2/\kappa$ implies
\[
f(v)
=v-\kappa v^{3/2}
\le-\frac{\kappa}{2}v^{3/2}
\le-\frac{\kappa}{2}v.
\]
Therefore
\[
\int_V^\infty e^{\lambda^3f(v)}\,\mathrm{d}v
\le
\int_V^\infty e^{-(\kappa/2)\lambda^3v}\,\mathrm{d}v
=
\frac{2}{\kappa\lambda^3}
e^{-(\kappa/2)\lambda^3V}.
\]
Combining the two bounds, we obtain
\[
\mathbb E e^{\lambda Z}
\le
1+C_\kappa V\lambda^3e^{m\lambda^3}
+\frac{2C_\kappa}{\kappa}
e^{-(\kappa/2)\lambda^3V}.
\]
Since $m>0$, for $\lambda\ge1$ the right-hand side is at most $C_\kappa(1+\lambda^3)e^{m\lambda^3}$. 
Consequently,
\[
\lambda^{-3}\log\mathbb E e^{\lambda Z}
\le
m+
\frac{\log C_\kappa+\log(1+\lambda^3)}{\lambda^3}.
\]
The second term tends to zero as $\lambda \to \infty$, finally establishing
\[
\limsup_{\lambda\to\infty}
\lambda^{-3}\log\mathbb E e^{\lambda Z}
\le
\frac{4}{27\kappa^2}.
\]
\end{proof}

\bibliographystyle{alpha}
\bibliography{bibliography.bib}

\end{document}